\newcommand{\RR}{\mathbb{R}}
\newcommand{\CC}{\mathbb{C}}
\newcommand{\ZZ}{\mathbb{Z}}
\newcommand{\NN}{\mathbb{N}}
\newcommand{\QQ}{\mathbb{Q}}
\newcommand{\mcA}{\mathcal A}
\newcommand{\mcB}{\mathcal B}
\newcommand{\mcC}{\mathcal C}
\newcommand{\mcE}{\mathcal E}
\newcommand{\mcJ}{\mathcal J}
\newcommand{\mcL}{\mathcal L}
\newcommand{\mcM}{\mathcal M}
\newcommand{\mcN}{\mathcal N}
\newcommand{\defeq}{\vcentcolon=}
\newcommand{\id}{\operatorname{id}}
\newcommand{\rank}{\operatorname{rank}}
\newcommand{\ind}{\operatorname{ind}}
\newcommand{\lcm}{\operatorname{lcm}}
\newcommand{\im}{\operatorname{im}}
\newcommand{\dd}{\partial}
\newcommand{\ev}{\mathrm{ev}}
\newcommand{\RS}{\mathrm{RS}}
\newcommand{\CZ}{\mathrm{CZ}}
\newcommand{\symm}{\mathrm{symm}}
\newcommand{\reg}{\mathrm{reg}}
\newcommand{\fixed}{\mathrm{fixed}}
\DeclarePairedDelimiter{\ceil}{\lceil}{\rceil}
\DeclarePairedDelimiter{\floor}{\lfloor}{\rfloor}
\newtheorem{thm}{Theorem}[section]
\newtheorem{prop}[thm]{Proposition}
\newtheorem{cor}[thm]{Corollary}
\newtheorem{lem}[thm]{Lemma}
\theoremstyle{definition}
\newtheorem{defn}{Definition}[section]
\newtheorem*{rmk}{Remark}
\title{Symplectic homology of some Brieskorn manifolds}
\author{Peter Uebele}
\date{}
\begin{document}

\maketitle

\begin{abstract}
 This paper consists of two parts. In the first part, we use symplectic homology to distinguish the contact structures on the Brieskorn manifolds $\Sigma(2\ell,2,2,2)$, which contact homology cannot distinguish. This answers a question from \cite{KvK}.
 
 In the second part, we prove the existence of infinitely many exotic but homotopically trivial exotic contact structures on $S^7$, distinguished by the mean Euler characteristic of $S^1$-equivariant symplectic homology. Apart from various connected sum constructions, these contact structures can be taken from the Brieskorn manifolds $\Sigma(78k+1,13,6,3,3)$. We end with some considerations about extending this result to higher dimensions.
\end{abstract}

\section{Introduction}

Brieskorn manifolds, defined as
\[
 \Sigma(a_0,\ldots,a_n)\defeq \{z\in \CC^{n+1} \mid z_0^{a_0} + \cdots + z_n^{a_n} = 0, \|z\|=1 \}
\]
for integers $a_i\geq 2$, have been a rich source of interesting examples in geometry and topology, for instance for the discovery of exotic spheres.

In contact topology, they became prominent after Ustilovsky \cite{Ustil} used them to show that the spheres $S^{4m+1}$ carry infinitely many different contact structures in each formal homotopy class. To prove this, he used certain Brieskorn manifolds that are diffeomorphic to spheres, but whose canonical contact structures (see Section~\ref{sec_def_Brieskorn}) have different contact homology. Later on, van Koert \cite{vanKoert} calculated the cylindrical contact homology for all Brieskorn manifolds for which it is (conjecturally) well-defined, using Morse--Bott methods from \cite{Bourgeois_thesis}. In another event, Fauck \cite{Fauck} has reproven Ustilovsky's theorem using Rabinowitz Floer homology, which has the advantage that its analytic foundations are well-established.

In this text, we will examine the manifolds $\Sigma(2\ell,2,2,2)$ for $\ell\geq 1$, which are all diffeomorphic to $S^2\times S^3$ (i.e.\ the unit cotangent bundle of $S^3$). As was pointed out in \cite{vanKoert}, these manifolds have the same contact homology. The same applies to positive $S^1$-equivariant symplectic homology, see \cite{KvK}. Moreover, the underlying almost contact structures coincide, as follows from \cite[Proposition~8.1.1]{Geiges-book} and the fact that their first Chern class vanishes. Thus, the question whether they (or some of them) are contactomorphic was left open. This text answers this question negatively:

\begin{thm} \label{main_thm}
 The manifolds $\Sigma(2\ell,2,2,2), \ell\geq 1$ with their canonical contact structures are pairwise non-contactomorphic. Hence, there are infinitely many different contact structures on $S^2\times S^3$.
\end{thm}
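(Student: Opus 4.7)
\textbf{Proof proposal for Theorem~\ref{main_thm}.}

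Since contact homology, positive $S^1$-equivariant symplectic homology, and the underlying almost contact structure all fail to distinguish the family $\Sigma(2\ell,2,2,2)$, my plan is to use the \emph{non-equivariant} symplectic homology $SH_*$ of a suitable Liouville filling (the Brieskorn variety, with a symplectic completion) as the main invariant. The hope is that the extra information contained in $SH_*$ (beyond its $S^1$-equivariant version), either in the form of gradings, product structures, or persistent sub/quotient modules under action filtrations, will depend non-trivially on $\ell$. The first step is therefore to set up the Morse--Bott framework of Bourgeois--Oancea for the Reeb flow on $\Sigma(2\ell,2,2,2)$: describe the Reeb orbits, which come in $S^1$- or higher-dimensional Morse--Bott families parametrised by the torus action $(z_0,\ldots,z_3)\mapsto (e^{i t/\ell} z_0, e^{it}z_1, e^{it}z_2, e^{it}z_3)$ (up to rescaling), and compute their Robbin--Salamon/Conley--Zehnder indices as explicit functions of $\ell$ using the index formulas from \cite{Ustil,vanKoert}.

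Second, I would assemble these Morse--Bott data into a chain complex for $SH_*$ following the standard recipe: each Morse--Bott family of orbits of period $T$ contributes the shifted Morse homology of the orbit space, with an appropriate degree shift by its Robbin--Salamon index. At this stage one should verify that the total graded vector space matches between different $\ell$ (otherwise the $S^1$-equivariant version would already distinguish them), so the honest invariant must live deeper. The natural candidates are: (i) the $\ZZ$-grading pattern of a canonical generator coming from the principal orbit (the ``short'' orbit whose period scales with $1/\ell$), whose degree should shift with $\ell$; (ii) the algebraic structure of $SH_*$ as a module or algebra under the pair-of-pants product; or (iii) a persistence-type invariant obtained from the action filtration restricted to a fixed cutoff — essentially reading off the CZ-indices of the finitely many Reeb orbits of period below a given value.

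Third, I would single out what I expect to be the most robust invariant: the degree in which the first ``short-orbit'' generators appear in positive symplectic homology, read together with the structure of the spectral sequence (or transfer) relating $SH_*$ and $SH_*^{S^1}$. Because the $S^1$-equivariant homology is insensitive to $\ell$ but the non-equivariant version includes, for each Morse--Bott circle, both a minimum and a maximum generator whose relative degree encodes $\ell$, I expect that the ranks of $SH_k$ in a specific range of degrees $k=k(\ell)$ are nonzero for one value of $\ell$ and zero for all other values, giving a clean contact invariant separating the $\Sigma(2\ell,2,2,2)$ pairwise.

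The main obstacle, I expect, is analytic/combinatorial rather than conceptual: one must justify the Morse--Bott computation (in particular, deal with the multiply-covered orbits and show that no problematic ``bad'' orbits interfere, which is where the $S^1$-equivariant vs.\ non-equivariant distinction gets delicate) and then verify that the distinguishing degree really does depend on $\ell$ — this requires carrying out the Robbin--Salamon index computation carefully, because the index formulas for Brieskorn manifolds contain floor functions whose values must be compared for different $\ell$. A secondary challenge is to ensure the filling-independence of whichever piece of $SH_*$ is being used as the invariant, which for simply connected Weinstein fillings with $c_1=0$ should follow from standard arguments but must be checked for this specific family.
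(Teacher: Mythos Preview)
Your overall direction---use non-equivariant positive symplectic homology with a Morse--Bott chain model, compute Robbin--Salamon indices, and check filling-independence---matches the paper's strategy. However, you have misidentified the core difficulty, and the step you label ``analytic/combinatorial rather than conceptual'' is precisely where a new idea is needed.

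First, a misconception: you write that the total graded vector spaces of $SH^+_*$ should match for different $\ell$, since otherwise the $S^1$-equivariant version would already distinguish them. This inference is wrong, and in fact the graded vector spaces of $SH^+_*(W;\ZZ_2)$ \emph{do} differ---this is exactly the invariant the paper uses (Theorem~\ref{thm_n3}). The equivariant version collapses information that the non-equivariant ranks retain; no product or persistence structure is needed.

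Second, and more seriously, for $n=3$ the Morse--Bott chain groups have two generators in almost every degree $\geq 4$ (see Table~\ref{table_generators}), with many potential differentials between consecutive degrees that cannot be excluded by index or action arguments alone. Your proposal treats the passage from chain complex to homology as routine bookkeeping, but it is not: one must actually determine these differentials. The paper's key contribution is a $\ZZ_2$-symmetry argument. After Ustilovsky's perturbation, one uses the involution $\psi(w_0,w_1,w_2,w_3)=(-w_0,-w_1,w_2,w_3)$ on the filling; for a $\psi$-equivariant almost complex structure, Floer cylinders between the orbits $N\gamma^\pm$ come in pairs $u,\psi\circ u$, so the differential vanishes over $\ZZ_2$. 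Making this rigorous requires showing that no cylinders lie in the fixed locus (Lemma~\ref{lem_fixed}) and, crucially, proving transversality within the restricted class of $\psi$-symmetric $J$ (Proposition~\ref{prop_transversality}). This symmetry idea is absent from your proposal, and without it the computation of $SH^+_*$ cannot be completed.
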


At this point, we should mention two other results, by Lerman \cite{Lerman} and Abreu and Macarini \cite{Ab-Mac}, respectively, who also find infinitely many contact structures on $S^2\times S^3$. However, their examples do not overlap with the ones discussed here for the following reasons: The examples of \cite{Lerman} have non-vanishing first Chern class, whereas all Brieskorn manifolds have vanishing first Chern class. The examples of \cite{Ab-Mac} also have vanishing first Chern class, but they can be distinguished from ours by their contact homology. Namely, they all have contact homology in degree $0$, whereas the contact homology of $\Sigma(2\ell,2,2,2)$ starts in degree $2$ (see \cite[Example~3.1.1]{vanKoert}).

We prove Theorem~\ref{main_thm} by computing the (positive) symplectic homology of a symplectic filling of $\Sigma(2\ell,2,2,2)$ with $\ZZ_2$-coefficients. If the contact manifold satisfies certain index conditions (see Lemma~\ref{lem_indep_filling}), the positive symplectic homology is independent of the filling, and thus distinguishes the contact structures. 
Along the way, we also compute the positive symplectic homology of $\Sigma(2\ell,\underbrace{2,\ldots,2}_n)$ for $n\geq 5$ odd, which turns out to be much easier than for $n=3$.

The result, together with \cite{Ustil} or \cite{Fauck}, can also be viewed as a classification of the links of $A_k$-singularities as contact manifolds. These links can be defined as the Brieskorn manifolds $\Sigma(k+1,\underbrace{2,\ldots,2}_{n})$ with $k\geq 1$ and $n\geq 2$. If $n$ is even, these manifolds are already distinguished by their singular homology, because $H_{n-1}(\Sigma(k+1,2,\ldots,2)) = \ZZ_{k+1}$ in this case. For $n$ odd and $k$ even, the contact structures are distinguished in \cite{Ustil} and \cite{Fauck}. Note that their results can also be proven using symplectic homology, with computations almost identical to \cite{Fauck}. This leaves the case $n$ odd and $k$ odd, which is treated here.

Going back to Ustilovsky's result, one might wonder whether a similar statement about exotic contact structures on spheres also holds true for $S^{4m+3}$. These dimensions turn out to be more complicated, mainly because, unlike in dimensions $4m+1$, there are infinitely many formal homotopy classes of almost contact structures. Hence, it is more difficult to find contact structures representing a given formal homotopy class, e.g.\ the standard one.

Partial results in this direction were proven in \cite{Geiges-App}, \cite{Ding-Geiges} and \cite{McLean}. In particular, \cite{Ding-Geiges} shows the existence of one exotic but homotopically trivial contact structure on $S^{4m+3}$ for every $m\geq1$, while \cite[Corollary~1.5]{McLean} implies existence of at least two such contact structures on spheres of dimension $2n-1\geq 15$.

In this text, we treat mainly dimension $7$. We can show that there are in fact infinitely many exotic but homotopically trivial contact structures on $S^7$. Our method is somewhat similar to \cite{Ustil}: We use a class of Brieskorn manifolds, namely $\Sigma(78k+1,13,6,3,3)$, which we show to be all diffeomorphic to $S^7$. Moreover, their canonical contact structures all lie in the standard formal homotopy class of $S^7$. Note that \cite[Proposition~19]{Geiges-App} actually claims that such an example cannot exist. However, its proof contains a mistake, originating from different conventions about Bernoulli numbers.

In order to distinguish the contact structures, it would be very difficult to compute any variant of contact homology or symplectic homology, because there are generators in a wide range of degrees and the differential is hard to compute. Also, most of these homology theories might actually depend on the filling, in which case one cannot use them to distinguish contact structures. Instead, we will use the \emph{mean Euler characteristic}, see Section~\ref{sec_mean_Euler} for the definition. Although this quantity is derived from positive $S^1$-equivariant symplectic homology, it is in fact, whenever it is defined, an invariant of the contact structure (see \cite[Corollary~2.2]{FSvK}). Furthermore, it is much easier to compute because, much like the usual Euler characteristic of singular homology, one can do the computations on the chain level, without knowing the differential.

As for higher dimensions, it seems difficult to get a similar example. By using connected sums, one can simplify a little, thus making a result possible for dimensions $11$ and $15$. In principle, the construction might be possible for any dimension, but the computations get increasingly difficult.

\begin{thm} \label{thm_intro_exotic_spheres}
 There exist infinitely many exotic but homotopically standard contact structures on $S^7$, $S^{11}$ and $S^{15}$.
\end{thm}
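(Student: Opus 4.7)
The plan is to treat each dimension separately by exhibiting an explicit infinite family of Brieskorn manifolds whose canonical contact structures (i) live on the standard smooth sphere, (ii) belong to the standard formal homotopy class, and (iii) are pairwise distinguished by their mean Euler characteristic. For $S^7$, the family is the one announced in the introduction, $\Sigma(78k+1,13,6,3,3)$ with $k\geq 1$, and the argument breaks into three essentially independent pieces.

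First, I would show that each $\Sigma(78k+1,13,6,3,3)$ is diffeomorphic to the standard $S^7$. That it is a homotopy sphere follows from the classical combinatorial Brieskorn--Pham criterion on the exponents. To identify the element of $\Theta_7\cong\ZZ_{28}$, I would use Brieskorn's formula expressing this class as a signature-type invariant of the Milnor fibre, which is a residue-sum depending only on the exponents modulo suitable congruences. The choice $78=\lcm(13,6,3)$ is clearly engineered so that the residues are $k$-periodic and yield $0\in\ZZ_{28}$ uniformly in $k$; verifying this is a finite calculation. Second, I would check that the canonical almost contact structure is the standard one. Since $c_1=0$, the obstruction lies in a higher homotopy group, and on $S^7$ formal almost contact structures modulo homotopy are classified by an integer-valued invariant that can be written as a Bernoulli-number-weighted combination of Chern numbers of the Milnor fibre. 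Here one must reproduce \cite{Geiges-App} with the correction of the Bernoulli-number convention flagged in the introduction; with the right normalisation, the invariant will vanish for $(78k+1,13,6,3,3)$ for all $k$.

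Third, I would compute the mean Euler characteristic $\chi_m$ of positive $S^1$-equivariant symplectic homology. Thanks to \cite[Corollary~2.2]{FSvK}, $\chi_m$ is a contact invariant whenever defined, so if the contact manifolds satisfy the index conditions needed for well-definedness (which follows from the same estimates used for $\Sigma(2\ell,2,2,2)$ in the first half of the paper), it suffices to show that $\chi_m$ takes infinitely many values on the family. On a Brieskorn manifold the Reeb flow is Morse--Bott, and the chain-level contributions can be read off from the exponents via the Conley--Zehnder indices computed in \cite{vanKoert} and \cite{KvK}. Since $\chi_m$ only depends on Cesàro averages of $(-1)^{\CZ}$ over the orbit spectrum, the differential plays no role, and the computation reduces to evaluating a generating-function limit depending on $k$; I expect $\chi_m$ to be an explicit rational function of $k$ with infinitely many values.

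For $S^{11}$ and $S^{15}$, I would not try to find Brieskorn spheres directly in the standard formal class — instead, I would take a single Brieskorn manifold $\Sigma_0$ in each dimension, together with a convenient contact structure (possibly itself exotic), and consider the contact connected sums $\Sigma_0\,\sharp\, (S^{2n-1},\xi_k)$ with an infinite family of distinct exotic standard-formal-class structures $\xi_k$ already produced in lower rank, or alternatively Weinstein connected sums of several Brieskorn pieces chosen so that the resulting smooth type and formal class are standard. The behaviour of positive $S^1$-equivariant symplectic homology under Weinstein connected sum gives an additivity formula for $\chi_m$ up to a known constant, reducing the distinction problem to showing that $\chi_m$ takes infinitely many values on the Brieskorn summand, which is again a residue computation. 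The main obstacle throughout will be step two: pinning down the formal-homotopy-class invariant with correct Bernoulli-number conventions and verifying that it vanishes for an infinite subfamily — this is precisely where the literature contains a known mistake, and it is the step least amenable to a routine Morse--Bott computation.
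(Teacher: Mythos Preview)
Your $S^7$ plan is essentially the paper's: the three steps (diffeomorphism type via the signature of the Milnor fibre, almost contact structure via an explicit formula, distinction via $\chi_m$) match Section~\ref{sec_exotic_S7} exactly, and your expectation that $\chi_m$ is a rational function of $k$ is confirmed by the paper's computation $\chi_m = (71+3198k)/(142-1092k)$. One small difference: the paper identifies the almost contact structure using Morita's formula \cite{Morita} rather than by correcting \cite{Geiges-App}; Morita gives $ac$ directly as an expression in $\sigma(W_a)$ and $\mu(a)$, and the vanishing condition becomes the clean equation $31\sigma_a^+ = 59\sigma_a^-$, verified from the explicit $\sigma_a^\pm$ in Proposition~\ref{prop_signature}. (Also, invariance of $\chi_m$ does not rest on the index conditions from Lemma~\ref{lem_indep_filling}; it comes directly from \cite[Corollary~2.2]{FSvK}.)

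For $S^{11}$ and $S^{15}$ your outline has the right shell (connected sums, additivity of $\chi_m$) but is missing the key mechanism. Your first suggestion --- connect-summing a Brieskorn $\Sigma_0$ with standard-formal-class structures $(S^{2n-1},\xi_k)$ ``already produced in lower rank'' --- is circular: those $\xi_k$ are precisely what you are trying to construct in that dimension. Your second suggestion is what the paper does, but the nontrivial content is \emph{how} to choose the pieces. One must find two Brieskorn manifolds $\Sigma_1,\Sigma_2$ in the given dimension, both diffeomorphic to the standard sphere, whose almost contact invariants $ac_1,ac_2\in\ZZ$ have \emph{opposite signs}; then $(|ac_2|\,\Sigma_1)\#(ac_1\,\Sigma_2)$ has trivial $ac$, and iterated self-connected-sums give infinitely many values of $\chi_m$ via Proposition~\ref{prop_conn_sum}. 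Producing $\Sigma_1$ with $ac_1>0$ is easy (the paper uses $\Sigma(6k+1,3,2,\ldots,2)$ with a suitable $k$ and an estimate on $S_m$), but producing $\Sigma_2$ with $ac_2<0$ requires $\mu$ to dominate the signature term in Morita's formula, which forces large exponents and computer-assisted verification: the paper uses $\Sigma(72\cdot 496+1,9,8,8,8,8,8)$ in dimension~$11$ and $\Sigma(90\cdot 4064+1,10,9,\ldots,9)$ in dimension~$15$, and then checks $\chi_m(\Sigma)\neq -\tfrac12$ numerically. Without identifying this opposite-sign cancellation and supplying the concrete $\Sigma_2$, your higher-dimensional argument remains a plan without a proof.
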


\medskip

This text is organized as follows. Section~\ref{sec_Brieskorn} recalls some general facts about Brieskorn manifolds, their contact structures and their Reeb dynamics as well as their topology.
In Section~\ref{sec_MB}, we start computing the symplectic homology in a Morse--Bott framework. Merely computing the Conley--Zehnder indices turns out to be sufficient to compute the positive symplectic homology for $n\geq 5$ (see Theorem~\ref{thm_ngeq5}). However, for $n=3$, the indices lie too close to each other, so we cannot exclude differentials between the corresponding generators. Nevertheless, the Morse--Bott formalism is still useful to get upper bounds on the symplectic Betti numbers.

In the rest of Section~\ref{sec_sympl_homol}, the case $n=3$ is examined in detail. The main idea is to use a symmetry $\psi$ of $\Sigma(2\ell,2,2,2)$ to see that the Floer cylinders between certain orbits must come in pairs. Hence the differential vanishes for $\ZZ_2$-coefficients. 
To do this, we abandon the (full) Morse--Bott picture and use a perturbation, analogously to the one used by Ustilovsky in \cite{Ustil}. We first prove that there are no Floer cylinders in the fixed point set of $\psi$ (Lemma~\ref{lem_fixed}).
For a Floer cylinder $u$, we want to make sure that $\psi\circ u$ is again a Floer cylinder (Proposition~\ref{prop_reparametrization}). For this, we need to use an almost complex structure $J$ which is symmetric under $\psi$. This, in turn, requires a new proof of the transversality of the relevant moduli spaces, which we do in Proposition~\ref{prop_transversality}. Putting these ingredients together gives the symplectic homology of $\Sigma(2\ell,2,2,2)$.

Section~\ref{sec_exotic_S7} deals with the question about exotic contact structures on spheres, with the main example being $\Sigma(78k+1,13,6,3,3)$. By the result of Sections \ref{sec_diffeo_type} and \ref{sec_ac}, it has the standard smooth and almost contact structures, respectively. In Section~\ref{sec_mean_Euler}, the different values of $k$ are distinguished by the mean Euler characteristic. Finally, Section~\ref{sec_how_to_find} explains the algorithm how this example was found. We end with some remarks about generalizations to different almost contact structures, exotic $7$-spheres and higher dimensions in Section~\ref{sec_further_results}.

\vspace{0.3cm}
\noindent 
\emph{Acknowledgements.} First and foremost, I would like to thank my advisor Kai Cieliebak for introducing me to this subject, as well as for his continued support and advice. I am also grateful to Peter Albers for an important correction, and to Alexander Fauck, Otto van Koert and my colleague Sven Prüfer for various helpful discussions. Furthermore, I want to thank the anonymous referee for the careful reading and several helpful comments.

\section{Brieskorn manifolds} \label{sec_Brieskorn}

\subsection{Basic definitions} \label{sec_def_Brieskorn}

The Brieskorn manifolds are defined as follows: Let $n$ be a natural number and $a=(a_0,\ldots, a_n)$ an $(n+1)$-tuple of integers $\geq 2$. Then the singular hypersurface
\[
V(a) \defeq \{z\in \CC^{n+1} \mid z_0^{a_0} + \cdots + z_n^{a_n}=0\}
\]
is called the \emph{Brieskorn variety} of $a$, and
\[
 \Sigma(a)=V(a)\cap S^{2n+1}
\]
the \emph{Brieskorn manifold}. Moreover, the one-form
\begin{equation} \label{eq_contact_form}
 \alpha_a = \frac{i}{8} \sum_{j=0}^n a_j (z_j d\bar z_j - \bar z_j dz_j) 
\end{equation}
is a contact form on $\Sigma(a)$ (see \cite{Lutz_Meckert}), with Reeb vector field
\[
 R_{\alpha_a} = \left(\frac{4i}{a_0} z_0, \ldots, \frac{4i}{a_n} z_n\right). \text{\footnotemark}
\]
\footnotetext{By this notation, we actually mean the vector field $\sum_j \left(\frac{4i}{a_j}z_j\dd_{z_j} - \frac{4i}{a_j}\bar z_j\dd_{\bar z_j}\right)$. In particular, $R_\alpha$ lives in the real tangent space.}
Its flow is given by
\[
 \phi_t(z) = \left(e^{4it/a_0}z_0, \ldots, e^{4it/a_n}z_n\right).
\]

It is easy to find an exact symplectic filling of Brieskorn manifolds. Indeed, we can take the deformation 
\[
 V_\epsilon(a)\defeq \{z\in \CC^{n+1} \mid z_0^{a_0} + \cdots + z_n^{a_n}=\epsilon\}
\]
of $V(a)$ (with $\epsilon$ sufficiently small) and intersect it with the unit ball $B^{2(n+1)}$. The resulting manifold is smooth. Outside the origin, we can undo the deformation again, so that the boundary is just $\Sigma(a)$. 

A bit more precisely, we use a smooth, monotone decreasing cutoff function $\phi\in C^\infty(\RR)$ that fulfills $\phi(x)=1$ for $x\leq 1/4$ and $\phi(x)=0$ for $x\geq 3/4$. Then we define
\begin{equation} \label{eq_W}
 W = W_a = \left\{z\in \CC^{n+1} \mid z_0^{a_0} + \cdots + z_n^{a_n}=\epsilon \cdot \phi(\|z\|)\right\} \cap B^{2n+2}.
\end{equation}
As shown in \cite{Fauck}, this is an exact symplectic manifold $(W, \omega=d\theta)$, with boundary $\dd W=\Sigma(a)$ and $\theta|_{\dd W}=\alpha_a$. Alternatively, one could directly take $V_\epsilon(a)\cap B^{2n+2}$ as $W$ and use Gray's stability theorem to see that its boundary is contactomorphic to $\Sigma(a)$.

In Section~\ref{sec_sympl_homol}, we will examine a very special class of Brieskorn manifolds, namely those with $n=2m+1$ odd and $a=(2\ell,2,\ldots, 2)$. We abbreviate them by 
\[
 \Sigma_\ell\defeq \Sigma(2\ell,2,\ldots,2).
\]
We see immediately that in this case, the formulas for the contact form, the Reeb vector field and its flow simplify to
\begin{align*}
 \alpha &= \frac{i\ell}{4}(z_0 d\bar z_0 - \bar z_0 dz_0) + \frac{i}{4} \sum_{j=1}^n(z_j d\bar z_j - \bar z_j dz_j) \\
 R_\alpha &= 2i \left(\ell^{-1}z_0, z_1, \ldots, z_n\right) \\
 \phi_t(z) &= \left(e^{2it/\ell }z_0, e^{2it}z_1, \ldots, e^{2it}z_n\right).
\end{align*}

\subsection{Topology of Brieskorn manifolds} \label{sec_topo}

The singular homology of Brieskorn manifolds is very well understood. It is a classical fact (see e.g.\ \cite[Theorem~5.2]{Milnor}) that $\Sigma$ is highly-connected, meaning that 
\[
 \pi_1(\Sigma) = \cdots = \pi_{n-2}(\Sigma) = 0
\]
Consequently, their homology is concentrated in degrees $0,n-1,n,2n+1$. Of course, $H_0(\Sigma)\cong H_{2n+1}(\Sigma)\cong \ZZ$.
The homology in the middle dimension can be computed by a combinatorial algorithm from Randell \cite{Randell}. In the case of $\Sigma_\ell $, this algorithm gives $H_{n-1}(\Sigma_\ell )\cong \ZZ$, with no torsion elements. 

If one only wants to know whether $\Sigma(a)$ is a homology sphere, there is a simpler criterion, which was already known to Brieskorn (see \cite[Satz 1]{Brieskorn}).

\begin{thm}[Brieskorn] \label{thm_topol_sphere}
 The Brieskorn manifold $\Sigma(a_0, \ldots, a_n)$, $n\geq 3$, is a topological sphere if and only if one of the following two conditions holds:
 \begin{enumerate}[(i)]
  \item \label{item_two_isolated_pts} There are two exponents $a_i, a_j$ which are relatively prime to all the other exponents.
  \item \label{item_condition_B} There is one exponent $a_i$ which is relatively prime to all the other exponents. Additionally, there is a set of exponents $a_{j_1}, \ldots, a_{j_r}$, with $r\geq 3$ odd, such that each $a_{j_k}$ is relatively prime to any exponent not in the set, while $\gcd(a_{j_k}, a_{j_\ell })=2$ for all $k\neq \ell $.
 \end{enumerate}
\end{thm}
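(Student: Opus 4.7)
The plan is to reduce the topological question to a combinatorial condition via Milnor's theory of isolated hypersurface singularities. First, since $\Sigma(a)$ is $(n-2)$-connected and of dimension $2n-1\geq 5$, it is simply connected, and Poincar\'e duality together with the high connectivity implies that the only potentially nonzero reduced homology group is $H_{n-1}(\Sigma(a);\ZZ)$. By the generalized Poincar\'e conjecture (Smale), $\Sigma(a)$ is a topological sphere if and only if $H_{n-1}(\Sigma(a);\ZZ)=0$.

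To compute this group I would use the Milnor fiber $F$, a compact parallelizable $2n$-manifold with $\partial F=\Sigma(a)$ that is homotopy equivalent to a wedge of $\mu=\prod_i(a_i-1)$ copies of $S^n$. The long exact sequence of $(F,\partial F)$ combined with Lefschetz duality identifies $H_{n-1}(\Sigma(a))$ with the cokernel of the intersection form $S\colon H_n(F)\to H_n(F)^*$, so $\Sigma(a)$ is a sphere iff $|\det S|=1$. For the Brieskorn--Pham singularity Milnor showed $|\det S|=|\Delta(1)|$, where $\Delta$ is the characteristic polynomial of the monodromy, whose $\mu$ eigenvalues are explicitly the products
\[
\eta_{j}=\zeta_{a_0}^{j_0}\cdots\zeta_{a_n}^{j_n},\qquad 0<j_i<a_i,
\]
with $\zeta_{a_i}$ a primitive $a_i$-th root of unity. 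The problem has been reduced to: for which tuples $(a_0,\ldots,a_n)$ is $\prod_j(1-\eta_j)=\pm 1$?

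This last question is purely number-theoretic. I would group the factors by the exact order $d$ of $\eta_j$ and exploit the classical values $\Phi_1(1)=0$, $\Phi_{p^k}(1)=p$ for prime powers, and $\Phi_d(1)=1$ for $d>1$ not a prime power. Then $|\Delta(1)|=1$ forces (a) no resonance $\eta_j=1$, and (b) for each prime $p$, the cyclotomic contributions from tuples whose $\eta_j$ has order a power of $p$ must cancel. Translating these constraints into gcd relations among the $a_i$ yields the two alternatives in the statement: case (i), where two exponents coprime to everything else guarantee that for \emph{every} prime the required cancellation happens automatically; and case (ii), where a single such universally coprime exponent together with an odd-sized set of exponents with pairwise gcd exactly $2$ produces a number of factors of $\Phi_2(1)=2$ whose parity (together with the sign of $\Delta(1)$) yields a unit, while the other primes are suppressed as in case (i).

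The main obstacle is this last step. Bookkeeping the multiplicity of each prime $p$ in $|\Delta(1)|$ prime by prime, and checking that the parity conditions force exactly the configuration in case (ii) --- odd size at least $3$, pairwise gcd exactly $2$, and coprimality to the rest --- is the real content of the theorem. It is delicate precisely because only the prime $p=2$ admits a nontrivial loophole (since $\Phi_2(1)=2$ is the unique case where a cancellation can come from a single additional factor), and isolating the "graph-theoretic" picture from the cyclotomic arithmetic is what occupies the bulk of Brieskorn's original argument.
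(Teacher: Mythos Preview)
The paper does not give its own proof of this theorem; it is simply quoted from Brieskorn's original article \cite{Brieskorn} as a classical result. So there is nothing in the paper to compare against.

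That said, your outline is essentially Brieskorn's own argument. The reduction via high connectivity and the Milnor fibration to the condition $|\Delta(1)|=1$, together with the Pham--Brieskorn description of the monodromy eigenvalues as products of roots of unity, is exactly how the original proof begins; the cyclotomic bookkeeping at the end is indeed the substantive part, and you are right to flag it as the real obstacle. One small quibble: your heuristic for case~(ii) (``a number of factors of $\Phi_2(1)=2$ whose parity together with the sign of $\Delta(1)$ yields a unit'') is not quite the mechanism --- the actual argument tracks, for each prime $p$, the exponent of $p$ in $\Delta(1)$ via an inclusion--exclusion over subsets of indices whose $a_i$ are divisible by $p$, and the oddness of $r$ together with $\gcd=2$ forces this exponent to vanish rather than producing a cancelling sign. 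But as a roadmap of Brieskorn's proof your sketch is accurate.
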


The case $n=2$ is excluded in this theorem because then, the manifold is no longer simply-connected.
For all examples in Section~\ref{sec_exotic_S7}, condition \eqref{item_two_isolated_pts} will be satisfied. 

Another classical result concerns the topology of the filling $W_a$ from \eqref{eq_W}. By \cite[Theorems 5.1 and 6.5]{Milnor}, it is parallelizable and has the homotopy type of a wedge of $\mu=\mu(a) = \prod_{i=0}^n (a_i-1)$ copies of $S^n$.

Once one knows the topology, one can ask for the diffeomorphism type of a Brieskorn manifold. This can sometimes be deduced from Wall's classification of highly-connected manifolds \cite{Wall}. The following result about the diffeomorphism type of $\Sigma_\ell $ can be found in \cite[Proposition~6.1]{Durfee-Kauffman}.

\begin{prop}
 Fix $n\geq 3$ odd. Denote by $K$ the Kervaire sphere of dimension $2n-1$ (which can be defined as the Brieskorn manifold $\Sigma(3,2,\ldots,2)$) and by $S^*S^n$ the unit cotangent bundle of $S^n$. The diffeomorphism type of $\Sigma_\ell $ is given as follows:
 \begin{equation*}
  \Sigma_\ell  \cong 
  \begin{cases}
   S^{n-1}\times S^n & \text{ if } \quad \ell\equiv 0 \mod 4 \\
   S^*S^n & \text{ if } \quad \ell\equiv 1 \mod 4 \\
   (S^{n-1}\times S^n) \# K & \text{ if } \quad \ell\equiv 2 \mod 4 \\
   S^*S^n \# K & \text{ if } \quad \ell\equiv 3 \mod 4 \\
  \end{cases}
 \end{equation*}
\end{prop}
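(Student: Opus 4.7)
The plan is to invoke Wall's classification of $(n-2)$-connected $(2n-1)$-manifolds that bound parallelizable $(n-1)$-connected handlebodies, applied to the Milnor fiber $W_a$ from Section~\ref{sec_def_Brieskorn}. Since $W_a \simeq \bigvee^{\mu} S^n$ with $\mu = 2\ell-1$ odd, the middle homology $H_n(W_a;\ZZ) \cong \ZZ^{2\ell-1}$ carries a skew-symmetric intersection form (because $n$ is odd), and Wall's theorem identifies the diffeomorphism type of the boundary $\Sigma_\ell$ by (i) the isomorphism class of this form together with (ii) a quadratic refinement $q \colon H_n(W_a;\ZZ_2) \to \ZZ_2$ whose Arf invariant detects a Kervaire-sphere summand and whose value on the radical detects a framing twist on the generator of $H_{n-1}$. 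Note that $\dim \Sigma_\ell = 2n-1 \equiv 1 \pmod 4$ since $n$ is odd, so $K$ makes sense in this dimension.

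My first step would be to compute the Seifert form using Milnor's join formula, which expresses the Seifert matrix of $z_0^{2\ell} + z_1^2 + \cdots + z_n^2$ as an iterated tensor product of the Seifert matrices of the one-variable factors $z^{2\ell}$ and $z^2$. This yields an explicit $(2\ell-1)\times(2\ell-1)$ Seifert matrix $V$; the intersection form is $V - V^T$ and the quadratic refinement takes the form $q(x) \equiv x^{T} V x \pmod 2$. Since $\mu$ is odd, the skew form has a one-dimensional radical, spanned by some class $e_0$ which under the connecting map corresponds to the generator of $H_{n-1}(\Sigma_\ell) \cong \ZZ$; the orthogonal complement modulo $e_0$ is a rank-$(2\ell-2)$ symplectic space over $\ZZ_2$.

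Next I would extract the two $\ZZ_2$-invariants that separate the four candidate manifolds. The value $q(e_0)$ distinguishes $S^{n-1} \times S^n$ from $S^*S^n$: these share the same intersection form and Arf invariant, but differ precisely in the framing obstruction on the generator of $H_{n-1}$, which $q(e_0)$ records. The Arf invariant of $q$ on the symplectic quotient then toggles the presence of the Kervaire summand $K$. A direct computation with the tensor-product form of $V$ should show that both $q(e_0)$ and $\mathrm{Arf}(q)$ are $4$-periodic functions of $\ell$, with the pattern $q(e_0) = \ell \bmod 2$ and $\mathrm{Arf}(q) = \lfloor \ell/2 \rfloor \bmod 2$ (or equivalent), matching the four cases of the proposition.

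The main obstacle is the combinatorial bookkeeping in the Arf computation: one must choose a convenient symplectic basis of the rank-$(2\ell-2)$ quotient on which the tensor-product structure of $V$ becomes tractable, and then sum the contributions $q(e_i)q(f_i)$ over that basis. Once the $4$-periodicity of both invariants is established, the identification of $\Sigma_\ell$ with the four listed manifolds is immediate from Wall's classification together with the standard values of these invariants on $S^{n-1}\times S^n$, $S^*S^n$, and the Kervaire sphere $K$.
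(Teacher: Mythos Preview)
The paper does not prove this proposition; it simply cites \cite[Proposition~6.1]{Durfee-Kauffman}. Your proposal is essentially a sketch of the argument in that reference: Durfee and Kauffman view $\Sigma_\ell$ as a $2\ell$-fold branched cyclic cover, compute the Seifert form via the tensor-product formula, and read off the two $\ZZ_2$-invariants (the Arf invariant for the Kervaire summand and the framing obstruction on the generator of $H_{n-1}$) exactly as you describe, then appeal to Wall's classification \cite{Wall}. So your route coincides with the cited one rather than with anything original to the present paper.

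Your outline is sound, but two points deserve care when you carry it out. First, the value $q(e_0)$ on the radical does not by itself pin down the $S^{n-1}$-bundle over $S^n$ in all dimensions: Wall's invariant lives in $\pi_{n-1}(SO(n))$, and one must check that in the parallelizable-filling situation the relevant obstruction reduces to this single $\ZZ_2$-bit (Durfee--Kauffman handle this via the tangential invariant $\widehat\alpha$). Second, your predicted formulas $q(e_0)\equiv\ell\pmod 2$ and $\mathrm{Arf}(q)\equiv\lfloor\ell/2\rfloor\pmod 2$ are correct and match the four cases, but the Arf computation in the symplectic quotient of odd-rank lattice is the genuine combinatorial step; you have correctly identified it as the main obstacle, and it is precisely where Durfee--Kauffman invoke the periodicity of the Seifert matrix under iterated tensoring with the $A_1$ block.
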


In dimension $5$, the Kervaire sphere is diffeomorphic to the standard sphere \cite[Lemma~7.2]{KerMil}. Moreover, the cotangent bundle of $S^3$ is trivial, so $S^*S^3 \cong S^2\times S^3$, and we get:

\begin{cor}
 $\Sigma(2\ell,2,2,2)$ is diffeomorphic to $S^2\times S^3$.
\end{cor}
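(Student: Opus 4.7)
The plan is to simply specialize the preceding proposition to the case $n=3$ and use the two ingredients mentioned in the paragraph just before the corollary: that the $5$-dimensional Kervaire sphere $K$ is diffeomorphic to the standard $S^5$, and that $S^*S^3$ is trivial as a bundle, hence $S^*S^3 \cong S^2 \times S^3$.

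Concretely, I would proceed case-by-case according to $\ell \bmod 4$. In the first case $\ell \equiv 0 \pmod 4$, the proposition gives $\Sigma_\ell \cong S^{n-1}\times S^n = S^2\times S^3$ directly. In the second case $\ell \equiv 1 \pmod 4$, the proposition gives $\Sigma_\ell \cong S^*S^3$, which is $S^2\times S^3$ by triviality of the cotangent bundle. For $\ell \equiv 2$ or $3 \pmod 4$, the proposition gives the above manifolds connect-summed with $K$; since $K$ is standard in dimension $5$, taking connect sum with $K \cong S^5$ has no effect on the diffeomorphism type, and we land again on $S^2\times S^3$.

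Since none of these four cases require any nontrivial new computation beyond the already-quoted facts, the proof is essentially a bookkeeping step and there is no real obstacle. The only mildly nontrivial point to double-check is the assertion $S^*S^3 \cong S^2\times S^3$, which follows from the parallelizability of $S^3$ (giving a global trivialization $TS^3 \cong S^3\times \mathbb{R}^3$ and hence $S^*S^3 \cong S^3\times S^2$), together with the symmetry of the product.
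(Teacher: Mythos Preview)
Your proposal is correct and matches the paper's own argument exactly: the paper derives the corollary from the preceding proposition by noting (in the paragraph just before it) that the $5$-dimensional Kervaire sphere is standard and that $S^*S^3 \cong S^2\times S^3$ by parallelizability of $S^3$. Your case-by-case bookkeeping is precisely the content the paper leaves implicit.
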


If $\Sigma(a)$ is a topological sphere, it represents an element of the group $bP_{2n}$ of boundary-parallelizable homotopy spheres. For even $n$, this element can be identified from the signature of the filling $W_a$, as will be explained in Section~\ref{sec_diffeo_type}.

\section{Symplectic homology of $\Sigma(2\ell,2,\ldots,2)$} \label{sec_sympl_homol}

\subsection{Full Morse--Bott formalism} \label{sec_MB}

As usual in symplectic homology, we take the completion $\widehat W$ of the Liouville domain $(W,\theta)$. To construct it, denote by $Z$ the Liouville vector field, defined by $\iota_Z\omega = \theta$, and by $\rho_t$ its flow. A neighborhood $U$ of $\Sigma =\dd W\subset W$ can be parametrized by $\psi\colon [-\delta, 0]\times \Sigma \to U, \quad (r,x)\mapsto \rho_r(x)$. Then the symplectic completion is defined as the manifold
\[
 \widehat W \defeq W \cup_\psi (\RR_{\geq 0} \times \Sigma),
\]
equipped with the symplectic form
\[
 \hat \omega = d\hat \theta, \qquad \hat \theta \defeq
 \begin{cases}
  \theta \qquad & \text{on } W \\
  e^r \theta \qquad & \text{on } \RR_{\geq 0} \times \Sigma,
 \end{cases}
\]
where $r$ is the coordinate on $\RR_{\geq 0}$.
Note that $\hat \theta$ is a smooth one-form because $\psi^*(\theta)=e^r \alpha$. 

Next, we take a Hamiltonian $H$ on $\widehat W$ which is $C^2$-small on $W$ and has the form $H|_{\RR_{\geq 0} \times \dd W} = h(e^r)$ for some strictly increasing function $h$ satisfying $\lim_{x\to \infty}h'(x)=\infty$ and $h''>0$ (see \cite{Seidel} for some background about this approach). Its Hamiltonian vector field, defined by $dH=-\iota_{X_H} \hat \omega$, is
\[
 X_H(r,x) = h'(e^r) R_\alpha(r,x)
\]
for any point $(r,x)\in \RR_{\geq 0} \times \Sigma$. Hence, the $1$-periodic orbits of $X_H$ are either
\begin{itemize}
 \item critical points of $H$ in $W$, or
 \item 1-periodic orbits on the level sets $\{r\} \times \Sigma$, which are in one-to-one correspondence with closed Reeb orbits of period $h'(e^r)$ on $\Sigma$.
\end{itemize}
The orbits of the first kind give the negative part of symplectic homology, $SH_*^-(W)$, which is well-known to be isomorphic to the relative singular homology $H_{n+*}(W,\Sigma)$.
Thus we will now focus on the positive part of symplectic homology $SH_*^+(W)$, generated by the closed Reeb orbits in $\Sigma$.

In the usual approach to symplectic homology, one works with a (time-dependent) Hamiltonian for which the $1$-periodic orbits are non-degenerate.
This is clearly not the case for our choice of $H$. However, there is a Morse--Bott approach to symplectic homology, worked out in \cite{BO_mb}, which deals with Hamiltonians with degenerate orbits. While \cite{BO_mb} considered only the case of Hamiltonians for which the $1$-periodic orbits are transversally non-degenerate (i.e.\ appear only in $S^1$-families), it turns out that analogous statements are true for more general Hamiltonians.

For this approach, the relevant conditions are:
\begin{itemize}
\item The space 
\[
 \mcN_T\defeq \{z\in\Sigma \mid \phi_T(z)=z \}
\]
consisting of $T$-periodic orbits is a closed submanifold for any $T\in\RR_{\geq 0}$, such that the rank $d\alpha|_{\mcN_T}$ is locally constant and $T_p \mcN_T=\ker (d_p\phi_T-\id)$.
\item The set $\{T\geq 0 \mid \mcN_T\neq \emptyset\}$ is discrete.\begin{footnote}
{
Actually, ongoing work by Fauck \cite{Fauck_prep} indicates that the second condition follows from the first, so it might be redundant.
However, both conditions are obvious for Brieskorn manifolds, so we choose not to worry about this.
}
\end{footnote}
\end{itemize}
These conditions guarantee that we have a Morse--Bott setting for symplectic homology, with the critical submanifolds $\mcN_T$.
Moreover, to have a well-defined grading, assume that 
\begin{itemize}
 \item $c_1(W)=0$ and
 \item all closed Reeb orbits of $\Sigma$ are contractible in $\Sigma$.
\end{itemize}
The second assumption makes sure that the grading of generators of $SH^+(W)$ is independent of the filling $W$ (provided that $c_1(W)=0$). For Brieskorn manifolds of dimension at least five, both conditions are clearly satisfied.

Next, we choose a Morse function $f_T$ on each (non-empty) $\mcN_T$. Then, the generators of the complex $SC^+(W)$ are given by pairs $(T,\eta)$, where $\eta$ is a critical point of $f_T$.
Its index is given by (see \cite[Lemma~2.4]{Bourgeois_thesis})
\begin{equation} \label{eq_index}
 \mu(T,\eta) = \mu_{\RS}(\mcN_T) + \ind(\eta) - \frac{1}{2}(\dim \mcN_T-1),
\end{equation}
where $\ind(\eta)$ is the Morse index of $\eta$ as a critical point of $f_T:\mcN_T\rightarrow \RR$, and $\mu_{\RS}(\mcN_T)$ is the Robbin--Salamon index of the path of symplectic matrices induced by an orbit in $\mcN_T$, as described in \cite{Robbin-Salamon}. (Some authors call $\mu_{RS}$ the Maslov index, but this terminology is somewhat ambiguous.)

\medskip

Now, we apply this setup to the Brieskorn manifolds $\Sigma_\ell$ for $n\geq 3$ odd. The Reeb flow $\phi_t$ is periodic everywhere. Moreover, if we start at a point $z\in \Sigma_\ell $, its minimal period is $\pi$ if $z_0=0$ and $\ell\pi$ otherwise. Hence we get the critical submanifolds
\begin{equation*}
 \mcN_T = 
 \begin{cases}
  \Sigma_\ell  & \text{ if } T=N\pi, N\in\NN, \ell\mid N \\
  \{z\in\Sigma_\ell  \mid z_0=0\} & \text{ if } T=N\pi, N\in \NN, \ell\nmid N \\
  \emptyset & \text{ else.}
 \end{cases}
\end{equation*}
(In our convention, $\NN = \{1,2,\ldots\}$.)
As for the Morse functions, first note that
\[
 \left\{z\in\Sigma_\ell  \mid z_0=0\right\} = \left\{z\in\CC^{n+1}\mid z_0=0, \sum_{j=1}^n z_k^2=0, |z|^2=1 \right\}
\]
is diffeomorphic to the unit cotangent bundle of $S^{n-1}$ \cite[Section~3.2]{Fauck}. As shown in the appendix of \cite{Fauck}, there exists a perfect Morse function (for $\ZZ_2$-coefficients) on $S^*S^{n-1}$, i.e.\ a Morse function with only four critical points with indices $0,n-2,n-1,2n-3$. If $n=3$, we can use the same function on $\Sigma_\ell $. For $n>3$, the existence of a perfect Morse function on $\Sigma_\ell $ is not obvious. However, ongoing work of Fauck \cite{Fauck_prep} shows that, as least with field coefficients, one can formally work with the chain complex as if one had a perfect Morse function. Roughly, the argument is that the generators fit into a spectral sequence whose first page consists of the homology of the critical submanifolds (with appropriate degree shifts). Hence, we pretend to have a function on $\Sigma_\ell $ whose only critical points have indices $0,n-1,n,2n-3$.

The Robbin--Salamon-indices of these submanifolds have been computed by Fauck \cite{Fauck}, and in a slightly different notation earlier by van Koert \cite{vanKoert}.
For a general Brieskorn manifold, all periods $T$ are multiples of $\frac{\pi}{2}$, and the Robbin--Salamon index of the critical submanifold $\mcN_{L \pi/2}$ is
\begin{equation} \label{eq_Maslov_index}
 \mu_{\RS}(\mcN_{L\pi/2}) = \sum_{j=0}^n \left(\floor*{\frac{L}{a_j}}+\ceil*{\frac{L}{a_j}}\right) - 2L.
\end{equation}
For $a=(2\ell,2\ldots, 2)$, all periods are in fact multiples of $\pi$. So for $T=N \pi$, i.e.\ $L=2N$, this formula specializes to
\begin{align*}
 \mu_{\RS}(\mcN_{N\pi}) &= \floor*{\frac{N}{\ell }} + \ceil*{\frac{N}{\ell }} + 2N(n-2) \\
 &= \begin{cases}
     2\frac{N}{\ell } + 2N(n-2) & \mbox{if  } \ell\mid N \\
     2\floor*{\frac{N}{\ell }} + 2N(n-2)+1 & \mbox{if  } \ell\nmid N. \\
    \end{cases}
\end{align*}

When plugging this, together with the Morse indices mentioned above, into \eqref{eq_index}, the chain groups of the positive part of symplectic homology (with coefficients in $\ZZ_2$) take the form
\begin{equation} \label{eq_SC_plus_mb}
 SC^+_k(W) \quad \cong \bigoplus_{\substack{N\in\NN,\ \ell\mid N \\ d\in\{-n+1,0,1,n\}}} (\ZZ_2)_{2N/\ell +2N(n-2)+d} \quad \oplus \bigoplus_{\substack{N\in\NN,\ \ell\nmid N \\ d\in\{-n+3,1,2,n\}}} (\ZZ_2)_{2\floor{N/\ell} +2N(n-2)+d}
\end{equation}
where $(\ZZ_2)_k$ denotes the $\ZZ_2$-vector space on one generator of degree $k$.

\medskip

As the differential decreases the degree by one, many differentials can already be excluded by degree reasons.
Note also that there cannot be a differential between $2N/\ell +2N(n-2)+1$ and $2N/\ell +2N(n-2)$ or between $2\floor{N/\ell }+2N(n-2)+2$ and $2\floor{N/\ell }+2N(n-2)+1$, as the corresponding generators lie on the same critical manifold and the Morse--Bott differential is zero (as we had a perfect Morse function). Thus, we only have to check whether there can be non-zero differentials between indices with different values for $N$. 

Now, we assume $n\geq 5$. (The case $n=3$ will be examined in the next section.) Then, an easy computation shows that each degree occurs at most once and we have precisely one pair of generators in consecutive degrees for any pair $(N,N+1)$. However, there is an easy way to see that the differential vanishes. This is because the index of the generator with period $(N+1)$ is lower by one than the index of the generator with period $N$. By a standard argument of Floer homology, the differential cannot increase the period. 
To see this, remember that the generators of $SH^+$ are critical points of the action functional
\[
 \mcA_H:\mcC^\infty(S^1, \widehat W)\rightarrow \RR, \qquad \mcA_H(\gamma) = \int_{S^1}\gamma^*\hat\theta - \int_{S^1} H(t,\gamma(t))\, dt.
\]
Its value at a critical orbit $\gamma$ is $\mcA_H(\gamma)=e^rh'(e^r)-h(e^r)$. Note that $\dd_r(e^rh'(e^r)-h(e^r)) = e^{2r} h''(e^r) >0$, so Hamiltonian orbits with larger action correspond to larger values of $r$, and thus to Reeb orbits with larger period. The differential $\dd$ counts negative gradient trajectories of the action functional. Hence, $\dd$ decreases the action, as well as the periods of the corresponding Reeb orbits.

This concludes that the positive symplectic homology is generated by the chains above:

\begin{thm} \label{thm_ngeq5}
 For $n\geq5$, the positive symplectic homology of the filling of $\Sigma_\ell $ is given by
 \begin{equation*}
 SH^+_k(W) \cong
 \left\{
 \begin{aligned}
  \ZZ_2 & \qquad \mbox{if } k = 2N/\ell +2N(n-2)-n+1 & \\
  & \qquad\mbox{or } k = 2N/\ell +2N(n-2) & \\
  & \qquad\mbox{or } k = 2N/\ell +2N(n-2)+1 & \\
  & \qquad\mbox{or } k = 2N/\ell +2N(n-2)+n & & \mbox{for some }N\in\NN, \ell\mid N \\
  \ZZ_2 & \qquad\mbox{if } k = 2\floor{N/\ell }+2N(n-2)-n+3 & \\
  & \qquad\mbox{or } k = 2\floor{N/\ell }+2N(n-2)+1 & \\
  & \qquad\mbox{or } k = 2\floor{N/\ell }+2N(n-2)+2 & \\
  & \qquad\mbox{or } k = 2\floor{N/\ell }+2N(n-2)+n & & \mbox{for some } N\in \NN, \ell\nmid N \\
  0 & \qquad\mbox{else.}
  \end{aligned}
 \right.
\end{equation*}
\end{thm}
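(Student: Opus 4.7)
The plan is to show that for $n\geq 5$ the Morse--Bott differential on the chain complex \eqref{eq_SC_plus_mb} vanishes identically, so that the positive symplectic homology coincides with the chain groups already tabulated. The strategy rests on two separate filtrations: the Morse-index filtration within each critical submanifold, and the action (equivalently, period) filtration across different $N$. The first rules out internal differentials; the second together with a degree-gap estimate rules out all cross-period differentials.

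First, I would verify the intra-submanifold vanishing: on each $\mcN_{N\pi}$ (either $\Sigma_\ell$ or $S^*S^{n-1}$) we are working with a perfect Morse function (or its formal analog from Fauck's ongoing work), so the four generators attached to one value of $N$ contribute no differentials among themselves.

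Second, I would do the case analysis on consecutive periods. Given the two possibilities for the base degree at period $N$, namely
\[
 d_N = \begin{cases} 2N/\ell + 2N(n-2) & \text{if } \ell\mid N,\\ 2\floor{N/\ell} + 2N(n-2) + 1 & \text{if } \ell\nmid N,\end{cases}
\]
a direct computation in the three cases ($\ell\mid N$ and $\ell\nmid N{+}1$; both coprime to $\ell$; $\ell\nmid N$ and $\ell\mid N{+}1$) shows that the base shift $d_{N+1}-d_N$ is $2(n-2)$ or $2(n-2)+1$, and in every case the bottom generator at period $N{+}1$ sits exactly one degree below the top generator at period $N$, while all other generators at period $N{+}1$ are separated from all generators at period $N$ by at least $2$. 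For $n\geq 5$ one also checks that block $N{+}2$ starts a degree $\geq 2n-5\geq 5$ above the top of block $N$, so no further non-trivial adjacencies can occur.

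Third, I would invoke the action-filtration argument. Since $\mcA_H(\gamma)=e^{r}h'(e^{r})-h(e^{r})$ has derivative $e^{2r}h''(e^{r})>0$, action strictly increases with the period of the corresponding Reeb orbit; and the Floer differential, being a count of negative gradient trajectories of $\mcA_H$, strictly decreases action. Hence no non-trivial cylinder can flow from a lower-period orbit to a higher-period one. Combined with step two: the only candidate differential—from the top generator at period $N$ (higher degree, period $N$) to the bottom generator at period $N{+}1$ (lower degree, period $N{+}1$)—would have to increase the period, hence cannot exist; the reverse direction is ruled out because the differential cannot raise degree.

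The only real obstacle I expect is the bookkeeping in step two: one must verify carefully that for all four possible divisibility combinations of $(N,N{+}1)$ by $\ell$, and for all $n\geq 5$, the adjacent degrees occur only in the specified "wrong order" pattern and that no stray coincidences arise from non-consecutive periods (say $N$ and $N{+}k$ with $k\geq 2$). This is precisely the hypothesis $n\geq 5$ at work: the base shift $2(n-2)\geq 6$ exceeds the spread $2n-1$ of a single block only by a margin that forces a unique overlap of exactly one degree, so the argument just barely succeeds; for $n=3$ the spread exceeds the shift and many more coincidences occur, which is why the following sections need the symmetry argument for that case.
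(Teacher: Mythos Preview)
Your proposal is correct and follows essentially the same approach as the paper: kill intra-submanifold differentials via perfect Morse functions, observe that for $n\geq 5$ the only adjacency between distinct periods is the top of block $N$ sitting one degree above the bottom of block $N{+}1$, and then use the action filtration to rule out that single candidate since it would require the differential to increase the period. The paper compresses your three-step case analysis into the phrase ``an easy computation shows that each degree occurs at most once and we have precisely one pair of generators in consecutive degrees,'' but the content is identical.
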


The next lemma implies that we can use this theorem to distinguish the contact structures on $\Sigma_\ell $. Alternatively, we could argue that $\Sigma_\ell $ and $\Sigma_{\ell '}$ are non-contactomorphic because there is a degree $k\geq n+2$ in which the filling $\Sigma_\ell $ has non-vanishing symplectic homology, while, for a suitable non-degenerate contact form, there is no Reeb orbit with Conley--Zehnder index $k$ on $\Sigma_{\ell '}$.

\begin{lem} \label{lem_indep_filling}
 For $\Sigma_\ell$, the positive part of symplectic homology $SH^+$ is independent of the Liouville filling $W$, as long as $c_1(W)|_{\pi_2(W)}=0$.
\end{lem}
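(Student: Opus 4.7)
The plan is to show that every ingredient of the Morse--Bott complex $SC^+$ from Section~\ref{sec_MB}---its generators, its $\ZZ$-grading, and its differential---depends only on $(\Sigma_\ell,\alpha)$, not on the filling $W$.

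The generators are Reeb orbits on $\Sigma_\ell$ together with critical points of Morse functions on the orbit spaces $\mcN_T$; both are intrinsic. For the grading, formula~\eqref{eq_index} reduces the question to the filling-independence of $\mu_{\RS}(\mcN_T)$. The Robbin--Salamon index of an orbit $\gamma$ requires a symplectic trivialization of $\gamma^*\xi$, fixed by a choice of capping disk. Since $\Sigma_\ell$ is simply connected ($n\geq 3$), every $\gamma$ bounds a disk in $\Sigma_\ell$ itself, and the ambiguity of such a disk is measured by $c_1(\xi)$ evaluated on spheres in $\Sigma_\ell$; this pairing vanishes for all Brieskorn manifolds. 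The hypothesis $c_1(W)|_{\pi_2(W)}=0$ ensures that a capping disk chosen inside $W$ yields the same index as one chosen in $\Sigma_\ell\subset W$, so the grading is intrinsic.

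The remaining task---and the main obstacle---is the differential. The key input is that $(\Sigma_\ell,\alpha)$ is \emph{index-positive}: by \eqref{eq_Maslov_index}, Conley--Zehnder indices grow linearly with the Reeb period. Since the Floer differential decreases the action, and action on the $r$-levels of our Hamiltonians is strictly increasing in $r$ (hence in the Reeb period), the differential respects the period filtration. To compare two fillings $W, W'$, I would pick cofinal families of Hamiltonians on $\widehat W$ and $\widehat{W'}$ with matching slopes and construct a chain map via continuation; a neck-stretching argument along $\Sigma_\ell$ then shows that, in the degeneration limit, the relevant Floer cylinders break into curves in the symplectization $\RR\times\Sigma_\ell$, which are common to both fillings. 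This yields a chain isomorphism between the period-filtered subcomplexes, and passing to the colimit produces $SH^+(W)\cong SH^+(W')$. The technical heart is this neck-stretching step: one must verify that index-positivity precludes Floer cylinders from probing deeply into either filling and that no broken configurations with curves in the filling survive in the limit. Equivalently, one may invoke a general filling-independence theorem for index-positive contact manifolds (for instance via an identification of $SH^+$ with linearized contact homology), which applies once the index growth recorded in \eqref{eq_Maslov_index} is in hand.
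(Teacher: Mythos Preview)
Your overall strategy---generators and grading are intrinsic, then neck-stretching controls the differential---matches the paper's. But your justification for the crucial step is not quite right, and the condition you invoke is not the one that does the work.

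You characterize index-positivity as ``Conley--Zehnder indices grow linearly with the Reeb period.'' Linear growth alone is irrelevant here: what matters is a uniform \emph{lower bound} on the indices of \emph{all} closed Reeb orbits. The paper's precise condition is $\mu_{\CZ}(\gamma) > 3-n$ for every closed Reeb orbit $\gamma$ (after a small perturbation making orbits non-degenerate), which one checks from \eqref{eq_SC_plus_mb}. This is what rules out broken configurations with pieces in the filling: under the neck-stretching limit of \cite[Section~5.2]{BO_seq}, a rigid Floer cylinder between $\overline\gamma_p$ and $\underline\gamma_q$ produces a top-level punctured Floer cylinder $\tilde u$ in the symplectization $\RR\times\Sigma_\ell$, asymptotic at the extra punctures to orbits $\gamma_1,\ldots,\gamma_k$. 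This $\tilde u$ lives in a moduli space of virtual dimension
\[
  \mu(\overline\gamma_p)-\mu(\underline\gamma_q)-\sum_{j=1}^k\bigl(\mu_{\CZ}(\gamma_j)+n-3\bigr)-1
  = -\sum_{j=1}^k\bigl(\mu_{\CZ}(\gamma_j)+n-3\bigr),
\]
which is strictly negative unless $k=0$. Transversality (achieved with a time-dependent $J$ near $\overline\gamma,\underline\gamma$) then forces $k=0$, so $\tilde u$ is an unpunctured cylinder in the symplectization and the count is independent of the filling. Your sketch never isolates this dimension count, and the ``period filtration'' remark, while true, plays no role in excluding punctures. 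Also note that the paper deliberately avoids appealing to linearized contact homology, whose foundations it does not want to assume; your final sentence offers exactly that as an alternative.
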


\begin{proof}
 The proof is analogous to Theorem~1.14 in \cite{Ciel_Fra_Oan}. Let us recall it.
 
 First of all, the generators are obviously independent of the filling, and by the assumption on $c_1$ and simple-connectedness, their grading is also well-defined and independent.
 
 Note that by \eqref{eq_SC_plus_mb} and \cite[Lemma~2.3]{Bourgeois_thesis}, there exists a perturbed contact form on $\Sigma_\ell $ such that all Reeb orbits are non-degenerate and have Conley--Zehnder indices 
 \begin{equation} \label{eq_CZ_big_enough}
 \mu_{\CZ}(\gamma)>3-n.
 \end{equation}
 We will show that this condition makes sure that the differential is also independent of the filling. Consider two generators $\overline \gamma_p, \underline \gamma_q$, i.e.\ $p$ and $q$ are critical points on the closed Reeb orbits $\overline \gamma, \underline \gamma$, with $\im(\overline \gamma) \neq \im(\underline \gamma)$ and $\mu(\overline\gamma_p) = \mu(\underline \gamma_q)+1$. 
 The main point in the proof is to use a stretching-the-neck operation as in \cite[Section~5.2]{BO_seq}. Under this operation, rigid Floer cylinders between $\overline \gamma$ and $\underline \gamma$ in $\widehat W$ turn bijectively into certain Floer buildings which live partly in the symplectization $\RR \times \Sigma$ and partly in the filling $W$. Each of these parts may have several levels.
 
 For the purpose of this lemma, we are only concerned with the level at the top, which is a punctured Floer cylinder $\tilde u$ in the symplectization $\RR\times \Sigma$ between the orbits $\overline \gamma$ and $\underline \gamma$. At each puncture, $\tilde u$ is asymptotic to further orbits $\gamma_1, \ldots, \gamma_k$. By using an almost complex structure $J$ which is time-dependent near the orbits $\overline \gamma, \underline \gamma$, we can assume that $\tilde u$ is cut out transversally.
 
 However, $\tilde u$ lives in a moduli space of virtual dimension
 \[
  \mu(\overline \gamma_p) - \mu(\underline \gamma_q) - \sum_{j=1}^k \left(\mu_\CZ(\gamma_j)+n-3\right) - 1 = \sum_{j=1}^k \left(\mu_\CZ(\gamma_j)+n-3\right).
 \]
 Hence, by \eqref{eq_CZ_big_enough} and transversality, this moduli space can be non-empty only if $k=0$, i.e.\ if there are no punctures. Thus, the count of rigid Floer cylinders between $\overline \gamma$ and $\underline \gamma$ is independent of the filling.
\end{proof}
 Note that this proof does not rely on the well-definedness of (cylindrical or linearized) contact homology.

\begin{cor}
 For $n\geq 5$, the manifolds $\Sigma_\ell =\Sigma(2\ell,2,\ldots,2)$ are pairwise non-contactomorphic.
\end{cor}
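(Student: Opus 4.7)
The plan is to combine the filling-independence of Lemma~\ref{lem_indep_filling} with the explicit computation of Theorem~\ref{thm_ngeq5} and then extract a graded invariant of $SH^+_*$ that separates the values of $\ell$. Since $\Sigma_\ell$ is simply connected and the Liouville filling $W_\ell$ is parallelizable (hence $c_1(W_\ell)=0$, see Section~\ref{sec_topo}), Lemma~\ref{lem_indep_filling} makes $SH^+_*(W_\ell)$ a contact invariant of $\Sigma_\ell$ with its canonical contact structure. Any contactomorphism $\Sigma_\ell\cong\Sigma_{\ell'}$ would therefore induce an isomorphism of graded $\ZZ_2$-vector spaces between the groups computed in Theorem~\ref{thm_ngeq5}.

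To separate all values of $\ell$ uniformly, I would compare the asymptotic growth rate of $\dim SH^+_*$. From Theorem~\ref{thm_ngeq5}, each $N\in\NN$ contributes exactly four generators, all lying in a window of width at most $2n-1$ around the base degree $b_N \defeq 2N(n-2) + 2\lfloor N/\ell\rfloor$ (with $\lfloor N/\ell\rfloor$ replaced by $N/\ell$ when $\ell\mid N$). The slope of $N\mapsto b_N$ is $2(n-2) + 2/\ell = 2(\ell(n-2)+1)/\ell$, so counting the listed $(N,d)$-pairs yields
\[
 \sum_{k\leq K}\dim_{\ZZ_2}SH^+_k(W_\ell) \;=\; \frac{2\ell}{\ell(n-2)+1}\,K + O(1) \qquad (K\to\infty),
\]
and hence the limit
\[
 \rho(\ell) \defeq \lim_{K\to\infty}\frac{1}{K}\sum_{k\leq K}\dim_{\ZZ_2}SH^+_k(W_\ell) \;=\; \frac{2}{n-2+1/\ell}
\]
is a well-defined contact invariant. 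It is strictly increasing in $\ell$, which separates the $\Sigma_\ell$.

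I do not anticipate any real obstacle: the substantive work has already been done in Theorem~\ref{thm_ngeq5} and Lemma~\ref{lem_indep_filling}, and the asymptotic counting is elementary since the generators listed in Theorem~\ref{thm_ngeq5} are linearly independent basis elements. An alternative that avoids passing to a limit is to exhibit, for each pair $\ell\neq\ell'$, a concrete degree $k$ where $\dim_{\ZZ_2}SH^+_k(W_\ell)\neq \dim_{\ZZ_2}SH^+_k(W_{\ell'})$; for example, the minimum $k$ with $SH^+_k\neq 0$ is $n+1$ if $\ell=1$ and $n-1$ otherwise, and similar low-degree comparisons between the ``$\ell\mid N$'' and ``$\ell\nmid N$'' branches work for the remaining pairs.
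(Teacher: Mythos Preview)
Your main argument is correct and is essentially the paper's approach made explicit: the paper simply appeals to Theorem~\ref{thm_ngeq5} together with Lemma~\ref{lem_indep_filling} and leaves the actual comparison of the graded groups implicit (it also sketches, as an alternative, the idea of exhibiting a degree $k$ where one manifold has homology and the other has no generator at all). Your growth-rate computation is a clean way to carry out that comparison, and the asymptotic count $\rho(\ell)=2/(n-2+1/\ell)$ is right, using that for $n\geq 5$ each degree in the list of Theorem~\ref{thm_ngeq5} occurs exactly once.

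One small correction to your aside: the claimed ``example'' in the alternative is wrong. For $\ell=1$ and $N=1$ (so $\ell\mid N$) the smallest listed degree is $2\cdot 1+2(n-2)-(n-1)=n-1$, and for $\ell\geq 2$ and $N=1$ (so $\ell\nmid N$) it is $0+2(n-2)-(n-3)=n-1$ as well; hence the minimal nonzero degree is $n-1$ for every $\ell$, and it does not separate $\ell=1$ from the rest. This does not affect your main growth-rate argument.
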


\begin{rmk}
 In this section, one could have chosen coefficients in some other field instead of $\ZZ_2$. The only change would be in the homology of the critical submanifolds, and accordingly in $SH_*^+(W)$. Presumably, there is also a similar theorem for integer coefficients. However, this raises some difficulties because the critical submanifolds may not admit perfect Morse functions (e.g.\ $S^*S^{n-1}$ does not), and the spectral sequence argument from \cite{Fauck_prep} does not work over the integers.
\end{rmk}

\begin{rmk} \label{rmk_full_SH}
From here, one can easily compute the full symplectic homology of the filling $W$ of $\Sigma_\ell $.
Indeed, the singular relative homology of the pair $(W,\Sigma_\ell )$ is
\[
 H_k(W,\Sigma_\ell ;\ZZ) \cong 
 \begin{cases}
  \ZZ^{2\ell-1} &\text{ if } k=n \\
  \ZZ & \text{ if } k=2n \\
  0 & \text{ else},
 \end{cases}
\]
as can be seen from the statements at the beginning of Section~\ref{sec_topo}. Then, one uses $SH^-_k(W) \cong H_{k+n}(W,\Sigma_\ell )$ and the long exact sequence 
\[
 \cdots \longrightarrow SH^-_*(W) \longrightarrow SH_*(W) \longrightarrow SH^+_*(W) \longrightarrow SH^-_{*-1}(W) \longrightarrow \cdots 
\]
coming from the tautological exact sequence $0\to SC^-_* \to SC_* \to SC^+_* \to 0$. One can read off $SH_*(W)$ directly from this sequence, without having to know any of the maps.

Furthermore, one can easily compute the Rabinowitz Floer homology $RFH_*(W)$, either directly with the Morse--Bott methods used here (and in \cite{Fauck}) or from $SH_*(W)$, using the long exact sequence from \cite{Ciel_Fra_Oan}.
\end{rmk}

In the remaining part of Section~\ref{sec_sympl_homol}, we specialize to the case $n=3$. In this case, the methods considered up to this point are clearly insufficient to compute the symplectic homology, as there are many generators in consecutive degrees. The goal is to still get enough information to distinguish the contact structures for different values of $\ell$.

\subsection{Ustilovsky's perturbation} \label{sec_perturbation}

It turns out to be convenient to leave the full Morse--Bott formalism and work instead in a perturbed setup. We will use the same perturbation as Ustilovsky in \cite{Ustil}. In fact, we are still in a Morse--Bott situation after the perturbation, but with all critical manifolds being $S^1$. This is exactly the setup used in \cite{BO_mb}.

To start, Ustilovsky changes the coordinates by the following unitary transformation:
\[
 w_0=z_0, \qquad w_1=z_1, \qquad
 \begin{pmatrix}
  w_2 \\ w_3
 \end{pmatrix}
 = \frac{1}{\sqrt{2}}
 \begin{pmatrix}
  1 & i \\ 1 & -i
 \end{pmatrix}
 \begin{pmatrix}
  z_2 \\ z_3
 \end{pmatrix}.
\]
In these coordinates, 
\[
 \Sigma_\ell  = \left\{w\in\CC^4 \mid w_0^{2\ell}+w_1^2+2 w_2 w_3=0, \|w\|^2=1\right\}.
\]
Next, Ustilovsky introduces a new contact form $\alpha'\defeq K^{-1}\alpha$, where
\[
 K(w)\defeq \|w\|^2 + \epsilon \left(|w_2|^2-|w_3|^2\right)
\]
and $\epsilon>0$ is a sufficiently small, irrational number.\footnote{For the sake of simplicity, we have, as Ustilovsky, perturbed the contact form. In fact, it is possible to get the same outcome by perturbing the Hamiltonian, as is more common in symplectic homology, although it cannot be written down as nicely. To perturb the Hamiltonian orbits on the level set $\{r_0\}\times\Sigma_\ell$, one has to add $H_{\mathrm{pert},r_0}\defeq \epsilon h'(e^{r_0})e^r(|w_2|^2-|w_3|^2)$ to the Hamiltonian. Doing this for all level sets containing critical submanifolds (with suitable cutoff functions), one gets that the Hamiltonian vector field equals $h'(e^r)\cdot R_{\alpha'}$ near the critical submanifolds, hence the perturbed orbits are the same.}
He then shows that the corresponding Reeb vector field is
\begin{equation*}
 R_{\alpha'}= \left(\frac{2i}{\ell }w_0, 2i w_1, 2i(1+\epsilon)w_2, 2i(1-\epsilon_1)w_3 \right).
\end{equation*}
Hence, the only simple (i.e.\ not multiply covered) periodic Reeb orbits are
\begin{align} 
 \gamma^{0,+}(t) &= \left(re^{2it/\ell }, ir^\ell e^{2it},0, 0\right), \qquad r>0, r^{2\ell}+r^2=1,\quad 0\leq t\leq \ell\pi, \label{ustil_orbits_0} \\
 \gamma^{0,-}(t) &= \left(re^{2it/\ell }, -ir^\ell e^{2it},0, 0\right), \qquad r>0, r^{2\ell}+r^2=1,\quad 0\leq t\leq \ell\pi, \label{ustil_orbits_1} \\
 \gamma^+(t) &= \left(0, 0, e^{2it(1+\epsilon)}, 0\right), \qquad 0\leq t\leq\frac{\pi}{1+\epsilon}, \label{ustil_orbits_2} \\
 \gamma^-(t) &= \left(0, 0, 0, e^{2it(1-\epsilon)}\right), \qquad 0\leq t\leq\frac{\pi}{1-\epsilon}, \label{ustil_orbits_3}
\end{align}
and their multiples, all of which are transversally non-degenerate. Furthermore, for $\epsilon$ sufficiently small, the Conley--Zehnder indices of these orbits and their multiple covers (denoted by $N\gamma$ for a simple orbit $\gamma$) are given by
\begin{align}
 \mu_\CZ(N\gamma^{0,\pm}) &= 2N+2N\ell \quad \stackrel{N'\defeq N\ell }{=} \quad 2\frac{N'}{\ell }+2N' \label{ustil_ind_1}\\
 \mu_\CZ(N\gamma^+) &= 2\ceil*{\frac{N}{\ell }} +2N-2 \label{ustil_ind_2} \\
 \mu_\CZ(N\gamma^-) &= 2\floor*{\frac{N}{\ell }} +2N+2, \label{ustil_ind_3}
\end{align}
by a computation analogous to \cite[Lemma~4.2]{Ustil}.

Applying the Morse--Bott formalism of Section~\ref{sec_MB} to this situation, we get two generators for each of the orbits above. We denote these by $\gamma_m$ and $\gamma_M$, with $\mu(\gamma_M)=\mu(\gamma_m)+1$.\footnote{
Due to our use of \emph{negative} gradient flow lines, the indices of minimum and maximum are interchanged compared to \cite{BO_mb}.}
Hence we get the generators of $SC^+(W)$ as in Table~\ref{table_generators}.

\bigskip

\begin{table}[ht]
\begin{tabulary}{\textwidth}{L|C|C|C|C|C|C|C|C|C|C|C|C|C}
\hline
 Degree & $2$ & $3$ & $4$ & $5$ & $6$ & $7$ & $8$ & $\cdots$ & $2\ell~+~1$ & $2\ell~+~2$ & $2\ell~+~3$ & $2\ell~+~4$ & $\cdots$ \\
\hline
 Generators & $\gamma^+_m$ & $\gamma^+_M$ & $2\gamma^+_m$ & $2\gamma^+_M$ & $3\gamma^+_m$ & $3\gamma^+_M$ & $4\gamma^+_m$ & $\cdots$ & $\ell\gamma^+_M$ & $\gamma^{0,+}_m$ & $\gamma^{0,+}_M$  & $(\ell~+~1)\gamma^+_m$ & $\cdots$\\
 & & & $\gamma^-_m$ & $\gamma^-_M$ & $2\gamma^-_m$ & $2\gamma^-_M$ & $3\gamma^-_m$ & $\cdots$ & $(\ell~-~1)\gamma^-_M$ & $\gamma^{0,-}_m$ & $\gamma^{0,-}_M$  & $\ell\gamma^-_m$ & $\cdots$ \\
\hline
\end{tabulary}
\caption{The generators of $SC^+(W)$ for $n=3$, in the perturbed Morse--Bott setup.}
\label{table_generators}
\end{table}

Let us recall at this point how the differential in \cite{BO_mb} was defined.
Denote by $S_\gamma$ the $S^1$-family of orbits with geometric image $\im(\gamma)$. Given a Hamiltonian $H$ as in Section~\ref{sec_MB} and an $\hat\omega$-compatible, time-dependent almost complex structure $J$, the set $\widehat \mcM(S_{\overline \gamma}, S_{\underline \gamma}; H, J)$ is defined as the space of cylinders $u:\RR\times S^1 \to \widehat W$ satisfying the Floer equation
\begin{equation} \label{eq_Floer}
 \dd_s u + J_{(\theta,u)}(\dd_\theta u-X_H(u)) = 0,
\end{equation}
which converge asymptotically to some orbits in $S_{\overline \gamma}$ and $S_{\underline \gamma}$. The last part means that there exist orbits $\overline \gamma \in S_{\overline \gamma}$, $\underline \gamma \in S_{\underline \gamma}$ such that
\[
 \lim_{s\to -\infty} u(s,\theta)=\underline\gamma(\theta), \qquad \lim_{s\to \infty} u(s,\theta) =\overline\gamma(\theta), \qquad \lim_{s\to\pm\infty}\dd_s u(s,\theta)=0,
\]
uniformly in $\theta$.

If $S_{\overline\gamma} \neq S_{\underline \gamma}$, there is a free $\RR$-action on $\widehat \mcM(S_{\overline \gamma}, S_{\underline \gamma}; H, J)$, defined by $s_0\cdot u(\cdot, \cdot)=u(s_0+\cdot, \cdot)$. Define the moduli space as $\mcM(S_{\overline \gamma}, S_{\underline \gamma}; H, J) \defeq \widehat \mcM(S_{\overline \gamma}, S_{\underline \gamma}; H, J)/\RR$.\footnote{The homology class represented by $u$ is not specified because $H_2(\widehat W)=0$.}

These moduli spaces come along with evaluation maps $\overline\ev, \underline\ev$, defined by $u\mapsto \overline \gamma(0)$ and $u\mapsto \underline \gamma(0)$, respectively.
For $J$ in a comeagre set, the moduli spaces $\mcM(S_{\overline \gamma}, S_{\underline \gamma}; H, J)$ are transversally cut out (see \cite[Theorem~3.5]{BO_mb}). 
In this case, we can choose perfect Morse functions $f_\gamma$ on the spaces $S_{\gamma}$ such that their stable and unstable manifolds (denoted by $W^s$ and $W^u$, respectively) are transverse to $\overline \ev$ and $\underline \ev$ (see \cite[Lemma~3.6]{BO_mb}). The minima and maxima of these Morse functions give the generators in Table~\ref{table_generators}.

For two generators $\overline \gamma_p, \underline\gamma_q$ with $S_{\overline\gamma} \neq S_{\underline \gamma}$, the fibered product
\begin{equation}\label{eq_fib_prod}
 \mcM(\overline \gamma_p, \underline \gamma_q; H,J) \defeq W^s(p) \times_{\overline \ev} \mcM(S_{\overline \gamma}, S_{\underline \gamma}; H,J)_{\underline\ev} \times W^u(q).
\end{equation}
is a smooth, compact manifold of dimension $\mu(\overline \gamma_p)-\mu(\underline\gamma_q)-1$. In particular, if $\mu(\overline \gamma_p) - \mu(\underline\gamma_q) = 1$, it is a finite set. The coefficient $\langle \dd(\overline \gamma_p), \underline \gamma_q\rangle$ of the differential is then defined as the count (modulo $2$) of its elements.

If $S_{\overline\gamma} = S_{\underline \gamma}$, the coefficient $\langle \dd(\overline \gamma_p), \underline \gamma_q\rangle$ of the differential is simply the corresponding coefficient of the Morse differential. In our case, since all critical manifolds are circles and we use $\ZZ_2$-coefficients, all these differentials vanish. (For integer or rational coefficients, this would be true only for \emph{good} Reeb orbits, see Section~\ref{sec_mean_Euler_general} for the definition. For our choice of contact form on $\Sigma_\ell $, all Reeb orbits are actually good, as can be checked from equations \eqref{ustil_ind_1} to \eqref{ustil_ind_3}.)

\bigskip

The next goal is to collect as much information as possible about the differential between the generators in Table~\ref{table_generators}. First, it follows from \cite[Proposition~2]{BO_seq}, that there is no differential between $\gamma_M$ and $\gamma_m$ for any $\gamma$, so that we only need the definition above for the differential. Another important observation is given by the next lemma.

\begin{lem} \label{lem_rank_1}
 In the degrees $2N(\ell +1)+j$, where $N\in\NN$ and $j\in\{-1,0,1,2\}$, the rank of $SH^+(W)$ is at most one. In particular, there are some non-trivial differentials in Table~\ref{table_generators} in these degrees.
\end{lem}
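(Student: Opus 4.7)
The plan is to derive the rank bound by going back to the full (unperturbed) Morse--Bott chain complex of Section~\ref{sec_MB}, whose generators are listed in \eqref{eq_SC_plus_mb}, rather than working with the perturbed picture of Table~\ref{table_generators}. Both complexes compute the same positive symplectic homology $SH^+(W)$: Ustilovsky's perturbation may be viewed either as a perturbation of the contact form $\alpha$ or of the Hamiltonian $H$ (see the footnote in Section~\ref{sec_perturbation}), and the homology is invariant under the corresponding continuation by standard arguments. The step that requires the most care is precisely this identification---in effect, a cascade-style comparison between the full Morse--Bott setting with large critical submanifolds $\mcN_T$ and the $S^1$-family setting of Section~\ref{sec_perturbation}---but it is standard once the perturbation parameter $\epsilon$ is taken small enough that the action filtration by period is respected. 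Granting this, $\dim_{\ZZ_2}SH^+_k(W)$ is bounded above by the number of unperturbed generators in degree $k$, and it suffices to show that each degree of the form $2N(\ell+1)+j$, $j\in\{-1,0,1,2\}$, carries at most one generator of \eqref{eq_SC_plus_mb}.

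The count itself is elementary. Writing any period index $M\in\NN$ as $M=\ell M'+r$ with $0\le r\le \ell-1$, a generator of \eqref{eq_SC_plus_mb} associated to $M$ lies in degree $2M'(\ell+1)+2r+d$, where $d\in\{-2,0,1,3\}$ if $r=0$ and $d\in\{0,1,2,3\}$ if $r\neq 0$. Imposing the equation $2M'(\ell+1)+2r+d=2N(\ell+1)+j$ with $j\in\{-1,0,1,2\}$, a short size estimate on $2r+d$ forces $M'\in\{N-1,N\}$, and running through the remaining possibilities leaves only the four triples $(M',r,d)=(N,0,0)$, $(N,0,1)$, $(N,1,0)$, $(N-1,\ell-1,3)$. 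These produce exactly one generator in each of the four degrees $2N(\ell+1)$, $2N(\ell+1)+1$, $2N(\ell+1)+2$, $2N(\ell+1)-1$, respectively. This establishes $\dim_{\ZZ_2}SH^+_k(W)\le 1$ as claimed.

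The ``in particular'' assertion is then immediate: Table~\ref{table_generators} exhibits two generators of the perturbed chain complex in each of these four degrees, so at least one of the perturbed differentials affecting this range must be non-zero, since otherwise the homological rank in each degree would be two.
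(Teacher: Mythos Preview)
Your argument is correct and follows exactly the paper's own approach: bound the rank of $SH^+(W)$ by the rank of the chain groups in the full Morse--Bott complex \eqref{eq_SC_plus_mb}, and then verify by inspection that in the degrees $2N(\ell+1)+j$, $j\in\{-1,0,1,2\}$, this rank is one. The paper simply asserts the last step (``Checking the ranks in \eqref{eq_SC_plus_mb}''), whereas you carry out the case analysis explicitly; your count $(M',r,d)\in\{(N,0,0),(N,0,1),(N,1,0),(N-1,\ell-1,3)\}$ is correct for $\ell\geq 2$, and the ``in particular'' clause is handled exactly as intended.
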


\begin{proof}
 The chain complex from the full Morse--Bott setup of Section~\ref{sec_MB} will give the same symplectic homology groups. Hence, the ranks of the chain groups give upper bounds. Checking the ranks in \eqref{eq_SC_plus_mb}, one sees that this upper bound is one in the degrees $2N(\ell +1)+j$.
\end{proof}

We claim that away from these degrees, all differentials vanish. The proof of this uses a $\ZZ_2$-symmetry of $\Sigma_\ell $ (and $W$) and will occupy the rest of Section~\ref{sec_sympl_homol}. 

\begin{rmk}
 Although we will not need this, let us point out a few cases where the vanishing of the differential also follows from other reasons. For instance, by \cite[Proposition~3 and Remark~14]{BO_seq}, there cannot be a non-zero differential between $\gamma_M$ and $\tilde \gamma_m$ for any orbits $\gamma \neq \tilde \gamma$ with $\mu(\gamma)=\mu(\tilde \gamma)$, at least if we assume that transversality as in \cite[Remark~9]{BO_seq} holds for $\Sigma_\ell $.
 Checking the degrees in Table~\ref{table_generators}, this means that there is no differential from an odd degree to an even degree.
 
 One can also argue that there is no differential from $N\gamma^-_m$ to $N\gamma^+_M$. This is because in the full Morse--Bott picture from Section~\ref{sec_MB}, the generators corresponding to $N\gamma^-_m$ and $N\gamma^+_M$ belonged to the same critical manifold. They can be viewed as originating from a perturbation thereof. If there were a differential between them, it would have shown up in Section~\ref{sec_MB} as a Morse differential on this critical manifold, which it did not, as the Morse function on each critical submanifold was perfect. See e.g.\ \cite[Theorem~5.2.2]{BPS} for the correspondence between the Morse--Bott formalism and the perturbed version.
\end{rmk}

\subsection{Idea of symmetries} \label{sec_idea}

Define the involutive isomorphism
\[
 \psi: \CC^4 \to \CC^4, \qquad \psi(w_0,w_1,w_2,w_3) = (-w_0,-w_1,w_2,w_3).
\]
From the definition of $\Sigma_\ell $, one sees immediately that $\psi$ leaves $\Sigma_\ell $ invariant (because the exponents $a_0=2\ell,a_1=2$ are even). For the same reason, the filling $W$, as defined in \eqref{eq_W}, is left invariant, as well as its completion. Moreover, $\psi$ preserves the contact (resp.\ symplectic) form on $\Sigma_\ell $ (resp.\ $\widehat W$).

We can view $\psi$ as the generator of a $\ZZ_2$ symmetry. Denote by $\mcJ_\symm$ the set of all time-dependent, $\hat\omega$-almost complex structures that are symmetric under $\psi$, i.e.\ $\psi_* J = J$. The idea behind this definition is that we apply $\psi$ to the Floer cylinders that appear in the differential. The hope is that these cylinders always come in pairs $u, \psi\circ u$, so that the differential with coefficients in $\ZZ_2$ vanishes. 

As a first step, the next lemma lets us assume that the fixed point set $\widehat W_\fixed$ of $\psi$ does not contain any Floer cylinders. 

\begin{lem} \label{lem_fixed}
Let $\overline \gamma_p$ and $\underline \gamma_q$ be two generators with $S_{\overline\gamma} \neq S_{\underline \gamma}$.
There are no elements of \eqref{eq_fib_prod} whose Floer cylinders are contained in the fixed point set $\widehat W_\fixed$ of $\psi$.
\end{lem}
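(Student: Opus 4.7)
The strategy is to obtain a purely topological obstruction in the $2$-dimensional fixed set: every Floer cylinder contained in $\widehat W_\fixed$ preserves the free homotopy class of its asymptotic loops, but one can check that different iterates of $\gamma^+$ and $\gamma^-$ always represent distinct elements of $\pi_1(\widehat W_\fixed) \cong \ZZ$. This forbids $S_{\overline\gamma} \neq S_{\underline\gamma}$ immediately, independently of the choice of $J \in \mcJ_\symm$.

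First I would describe $\widehat W_\fixed$ explicitly. Since $\psi$ acts by $(w_0,w_1,w_2,w_3)\mapsto(-w_0,-w_1,w_2,w_3)$, we have $\widehat W_\fixed = \widehat W \cap \{w_0=w_1=0\}$. On $\Sigma_\ell$ this is the disjoint union $C_+ \sqcup C_- = \im(\gamma^+) \sqcup \im(\gamma^-)$, and since $\epsilon$ is irrational the perturbed Reeb field $R_{\alpha'} = (0,0,2i(1+\epsilon)w_2, 2i(1-\epsilon)w_3)$ admits no further closed orbits in the fixed locus (its $w_2$- and $w_3$-components rotate with incommensurable speeds). Inside the filling, $W_\fixed$ is the smooth real surface in $\{(w_2,w_3)\}$ cut out by $w_2 w_3 = \tfrac12\epsilon\phi(\|w\|)$. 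Parametrizing $w_2 = s\,e^{i\alpha}$, $w_3 = (c/s)\,e^{-i\alpha}$ with $c = \tfrac12\epsilon\phi(\|w\|)$, the two sheets $|w_2| \lessgtr \sqrt c$ meet smoothly at the waist $\|w\| = \sqrt\epsilon$ and limit as $\|w\|\to 1$ to $C_-$ (where $|w_2|\to 0$, $w_3 \to e^{-i\alpha}$) and to $C_+$ (where $|w_2|\to 1$, $w_2 \to e^{i\alpha}$) respectively. Hence $W_\fixed \cong [0,1] \times S^1_\alpha$, and attaching the cylindrical ends gives $\widehat W_\fixed \cong \RR \times S^1_\alpha$ with $\pi_1(\widehat W_\fixed) \cong \ZZ$ generated by the $\alpha$-loop.

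Next, any Floer cylinder $u\colon \RR \times S^1 \to \widehat W_\fixed$ continuously deforms its $s$-slices, so the free homotopy class of $u|_{\{s\}\times S^1}$ in $\widehat W_\fixed$ is independent of $s$, forcing $[\underline\gamma] = [\overline\gamma]$ in $\pi_1(\widehat W_\fixed)$. It remains to compute these classes. On $C_+$ the global coordinate is $\alpha = \arg w_2$, so the Reeb vector field reads $+2(1+\epsilon)\dd_\alpha$ and $\gamma^+$ winds once positively in $\alpha$; on $C_-$ the global $\alpha$ (inherited from the inner sheet) is $-\arg w_3$, so the Reeb vector field reads $-2(1-\epsilon)\dd_\alpha$ and $\gamma^-$ winds once negatively. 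Therefore
\[
 [N\gamma^+] = +N, \qquad [M\gamma^-] = -M \qquad \text{in } \pi_1(\widehat W_\fixed) \cong \ZZ.
\]
All these classes are pairwise distinct, so the hypothesis $S_{\overline\gamma} \neq S_{\underline\gamma}$ gives $[\overline\gamma] \neq [\underline\gamma]$ and rules out any Floer cylinder inside $\widehat W_\fixed$; in particular the fibred product \eqref{eq_fib_prod} has no element whose cylinder lies in $\widehat W_\fixed$.

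The heart of the argument is the explicit topological identification of $\widehat W_\fixed$ as a cylinder together with a globally defined $\alpha$-coordinate that restricts to opposite rotation directions on the two boundary circles $C_+$ and $C_-$; once this is in place, everything else reduces to the trivial homotopy invariance of Floer cylinders and needs no Fredholm or transversality input. The main technical task is therefore the verification of smoothness of $W_\fixed$ at the waist and the smooth matching of the $\alpha$-parametrization across the two sheets.
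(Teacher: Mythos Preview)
Your proposal is correct and follows essentially the same approach as the paper: identify $\widehat W_\fixed$ as a cylinder $\RR\times S^1$, compute that $N\gamma^+$ and $M\gamma^-$ represent the classes $+N$ and $-M$ in $\pi_1(\widehat W_\fixed)\cong\ZZ$, and conclude by homotopy invariance of the asymptotic loops of a Floer cylinder. The only difference is cosmetic: the paper writes down a clean explicit diffeomorphism $\widehat W_\fixed\to\CC^*$ (scaling, then straightening $w_2w_3=\phi$ to $w_2w_3=1$, then $z\mapsto(z,1/z)^{-1}$), whereas you build the same identification via the angle coordinate $\alpha=\arg w_2=-\arg w_3$; both routes yield the same winding-number computation.
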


\begin{proof}
 If one of the underlying orbits $\overline \gamma$ or $\underline \gamma$ lies outside of $\widehat W_\fixed$, there is nothing to show. So assume that both are multiples of $\gamma^+$ or $\gamma^-$. For this case, we are going to show that any two of these orbits live in distinct homotopy classes in $\widehat W_\fixed$.
 
 The fixed point set 
 \[
  \widehat W_\fixed = \{(w_2,w_3)\in\CC^2 \mid 2 w_2 w_3 = \epsilon \phi(\|w\|)\}
 \]
 is diffeomorphic to $\CC^*\cong\RR\times S^1$.
 Explicitly, this isomorphism can be taken to be the composition
 \[
  \widehat W_\fixed \longrightarrow \left\{w_2 w_3 = \phi\big(\sqrt{\epsilon/2}\|w\|\big)\right\} \longrightarrow \{w_2 w_3 = 1\} \longrightarrow \CC^*,
 \]
 where the first map is scaling by $\sqrt{2/\epsilon}$, the second is
 \[
 (w_2, w_3) \mapsto 
 \begin{cases}
  \big(w_2, \frac{1}{w_2}\big) & \text{if} \quad |w_2|\geq |w_3| \\
  \big(\frac{1}{w_3}, w_3\big) & \text{if} \quad |w_2|\leq |w_3|,
 \end{cases}
\]
and the third is the inverse of $z\mapsto (z, 1/z)$.
Therefore, the orbit $N\gamma^\pm$ is mapped to a loop homotopic to
\[
 [0,1] \to \CC^*, \qquad t\mapsto e^{\pm 2\pi N i t}.
\]
For different values of $N\in\NN$ and $\pm$, these loops all represent different elements of $\pi_1(\CC^*)\cong \ZZ$, hence there can be no cylinder in between.
\end{proof}

\begin{rmk}
 This simple proof was suggested by the referee. In the first version of this article, an alternative argument was given, based on the fact that the orbits $N\gamma^\pm$ have two different kinds of Conley--Zehnder indices: one with $\widehat W$ as the ambient manifold and one with $\widehat W_\fixed$ as the ambient manifold. It turns out that for some orbits, the difference of the latter indices is smaller than the difference of the former indices. Thus, for generic $J\in \mcJ_\symm$, the moduli space of Floer cylinders in $\widehat W_\fixed$ has negative dimension. 
\end{rmk}

\begin{prop} \label{prop_reparametrization}
 Let $J\in \mcJ_\symm$ and $u:\RR\times S^1\to \widehat W$ be a $J$-Floer cylinder between orbits $\overline \gamma \neq \underline \gamma$ which are contained in $\widehat W_\fixed$. Then $\psi\circ u$ is again a Floer cylinder between the same orbits. Moreover, there is no constant $s_0$ such that $\psi\circ u(s,\theta)=u(s+s_0,\theta)$ for all $(s,\theta)\in \RR\times S^1$.
\end{prop}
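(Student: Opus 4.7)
The plan is to split the proposition into its two claims: first, that $\psi\circ u$ solves the Floer equation with the same asymptotics, and second, that it cannot agree with any $\RR$-translate of $u$.

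For the first claim, I would begin by arranging that the Hamiltonian $H$ is $\psi$-invariant. On the cylindrical end this is automatic since $H = h(e^r)$ depends only on $r$, which $\psi$ preserves; on $W$ we simply choose a $\psi$-symmetric $C^2$-small Morse function, which has no effect on $SH^+$. Combining $\psi^* H = H$ with $\psi^*\hat\omega = \hat\omega$ (already recorded in Section~\ref{sec_idea}) yields $d\psi\circ X_H = X_H\circ\psi$. Substituting $\psi\circ u$ into \eqref{eq_Floer}, using the chain rule on $\partial_s$ and $\partial_\theta$, and then applying $d\psi^{-1}$ and commuting it through $J$ via the defining property $d\psi^{-1}\circ J_{(\theta,\psi(p))}\circ d\psi = J_{(\theta,p)}$ of $\mcJ_\symm$, the resulting equation collapses back to the Floer equation for $u$, which holds by assumption. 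The asymptotic conditions are automatic: since $\overline\gamma$ and $\underline\gamma$ are pointwise fixed by $\psi$, the cylinder $\psi\circ u$ converges to the same orbits.

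For the second claim, I argue by contradiction. Suppose $\psi\circ u(s,\theta) = u(s+s_0,\theta)$ for some $s_0\in\RR$. Applying $\psi$ once more and using $\psi^2 = \id$ yields
\[
 u(s,\theta) = u(s+2s_0,\theta) \qquad \text{for all } (s,\theta)\in \RR\times S^1.
\]
If $s_0\neq 0$, then $u$ is periodic in $s$ with period $|2s_0|$, but finiteness of the Floer energy $\int |\partial_s u|^2_J\,ds\,d\theta$ (equivalently, convergence $u(s,\theta)\to\overline\gamma(\theta)$ as $s\to+\infty$) forces $\partial_s u\equiv 0$, hence $\overline\gamma = \underline\gamma$, contradicting the hypothesis. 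If $s_0 = 0$, then $\psi\circ u = u$ identically, so the image of $u$ lies entirely inside $\widehat W_\fixed$, which is ruled out by Lemma~\ref{lem_fixed}. In either case we reach a contradiction.

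The only real subtlety — rather than a serious obstacle — is keeping $H$ and $J$ simultaneously $\psi$-symmetric while still retaining transversality of the relevant moduli spaces, but this is addressed separately in Proposition~\ref{prop_transversality}. Beyond that, the whole argument is a direct consequence of the chain rule, the involutive nature of $\psi$, and the asymptotic convergence built into the definition of a Floer cylinder.
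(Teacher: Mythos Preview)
Your proof is correct and follows essentially the same route as the paper: you use the $\psi$-equivariance of $J$ and $X_H$ together with the chain rule to verify that $\psi\circ u$ satisfies \eqref{eq_Floer} with the same asymptotics, and for the second part you iterate $\psi$ to obtain $2s_0$-periodicity, use convergence at $s\to\infty$ to force $s_0=0$, and then invoke Lemma~\ref{lem_fixed}. The paper's only cosmetic difference is that it pushes forward the Floer equation for $u$ by $\psi_*$ rather than substituting $\psi\circ u$ and pulling back, and it cites the limit at infinity directly rather than phrasing it via finite energy.
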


The second claim ensures that $\psi\circ u$ and $u$ are counted separately in the moduli space for the differential. 

\begin{proof}
 By assumption, both $J$ and $X_H$ are equivariant with respect to $\psi$. Thus, if $u$ satisfies the Floer equation \eqref{eq_Floer}, we can apply $\psi_*$ to both sides and get
 \begin{align*}
  0 &= \psi_*\left(\dd_s u + J_{(\theta,u)}(\dd_\theta u-X_H(u))\right) \\
  &= \dd_s (\psi\circ u) + J_{(\theta,\psi\circ u)} \left(\dd_\theta(\psi\circ u)-X_H(\psi\circ u)\right),
 \end{align*}
 establishing that $\psi\circ u$ is a Floer cylinder. Since $\overline \gamma$ and $\underline \gamma$ lie in $\widehat W_\fixed$, the asymptotics of $\psi\circ u$ and $u$ are the same.
 
 If there were such a constant $s_0$, we could use $\psi\circ \psi=\id$ to get
 \[
  u(s+2s_0,\theta)=\psi\circ u(s+s_0,\theta) = \psi\circ \psi \circ u(s,\theta)=u(s,\theta).
 \]
 So the function $s\mapsto u(s,\theta)$ would be periodic with period $2s_0$, but it also has a limit as $s\to \infty$. Since it is not constant, this implies $s_0=0$ and hence $\psi\circ u=u$. But $u$ cannot lie in $\widehat W_\fixed$ by Lemma~\ref{lem_fixed}, which gives a contradiction.
\end{proof}

We want to apply this proposition to show that certain Floer cylinders contributing to the differential of symplectic homology come in pairs, so that the differential vanishes in $\ZZ_2$. Before, though, we must show that there are almost complex structures in $\mcJ_\symm$ such that the relevant moduli spaces are cut out transversally.

\subsection{Transversality} \label{sec_transversality}

\begin{prop} \label{prop_transversality}
 Given $H$ as in Section~\ref{sec_MB} (time-independent), there exists a comeagre set $\mcJ_{\symm, \reg} \subset \mcJ_\symm$ for which all moduli spaces $\mcM(\overline \gamma_p, \underline \gamma_q; H,J)$ with $\mu(\overline \gamma_p) - \mu(\underline \gamma_q)=1$ and $S_{\overline\gamma} \neq S_{\underline \gamma}$ are transversally cut out.
\end{prop}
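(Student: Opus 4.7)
The plan is to adapt the standard Floer--Hofer--Salamon generic transversality scheme to the $\psi$-equivariant setting. I would consider the universal moduli space
\[
 \mcM^{\mathrm{univ}}(\overline \gamma_p, \underline \gamma_q) \defeq \{(u,J) \mid J\in \mcJ_\symm,\ u\in \mcM(\overline \gamma_p, \underline \gamma_q; H,J)\},
\]
and apply the Sard--Smale theorem to the projection onto the $J$-factor. This yields the desired comeagre set $\mcJ_{\symm,\reg}$, provided the linearization of the Floer equation jointly in $u$ and $J$ is surjective at every $(u,J)$, so that $\mcM^{\mathrm{univ}}$ is a Banach manifold. The $u$-linearization alone fails to be surjective in general, but surjectivity is restored after allowing variations $Y$ of $J$ along the tangent space to $\mcJ_\symm$, i.e.\ $\psi$-equivariant endomorphisms of $T\widehat W$ anticommuting with $J$ and compatible with $\hat \omega$. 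The key observation is that the equivariance constraint $\psi_* Y = Y$ is vacuous off $\widehat W_\fixed$: any smooth perturbation compactly supported in a small neighbourhood $U$ of a point $p\notin \widehat W_\fixed$ with $U\cap \psi(U)=\emptyset$ can be extended to an equivariant perturbation by adding its $\psi$-pushforward on $\psi(U)$ and zero elsewhere. Near such points, equivariant perturbations are as flexible as arbitrary ones.

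The remaining point is to produce, for each $u\in \widehat\mcM(S_{\overline \gamma}, S_{\underline \gamma}; H,J)$ with $S_{\overline \gamma}\neq S_{\underline \gamma}$, an injective point $(s_0,\theta_0)$ of $u$ whose image lies outside $\widehat W_\fixed$. Somewhere-injectivity of Floer cylinders between distinct orbits in the Morse--Bott setting is the standard ingredient underlying \cite[Theorem~3.5]{BO_mb}, and supplies an open, dense set of injective points in $\RR\times S^1$. Moreover, $u$ cannot be entirely contained in $\widehat W_\fixed$: if at least one asymptotic orbit lies outside $\widehat W_\fixed$ this is automatic from the asymptotic conditions, while if both asymptotic orbits lie in $\widehat W_\fixed$ it follows from Lemma~\ref{lem_fixed}. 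Hence $u^{-1}(\widehat W \setminus \widehat W_\fixed)$ is a non-empty open subset of $\RR\times S^1$, which must intersect the open, dense injective-point locus.

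With such $(s_0,\theta_0)$ in hand, the classical cokernel argument runs essentially unchanged. If $\eta$ were a non-zero element in the cokernel of the $u$-linearization (acting on $L^p$-sections along $u$ of the pulled-back tangent bundle), unique continuation for the adjoint linear Floer operator would force $\eta$ to be non-vanishing on a neighbourhood of $(s_0,\theta_0)$. One then constructs an equivariant $Y$, supported in a small neighbourhood of $u(s_0,\theta_0)$ disjoint from its $\psi$-image, whose $L^2$-pairing with $\eta$ is non-zero, contradicting the cokernel condition. Sard--Smale applied to the projection $\mcM^{\mathrm{univ}}\to \mcJ_\symm$ then completes the proof, and the finite-dimensional transversality of the stable/unstable manifolds of the $f_\gamma$ with the evaluation maps is a further generic property that may be arranged afterwards. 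The main technical obstacle is the somewhere-injectivity step in the autonomous, Morse--Bott setting; once invoked, the equivariant refinement is essentially automatic, precisely because every Floer cylinder is forced to leave the positive-codimension fixed locus $\widehat W_\fixed$.
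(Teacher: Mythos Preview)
Your overall strategy matches the paper's: set up the universal moduli space over $\mcJ_\symm$, prove surjectivity of the linearization by the Floer--Hofer--Salamon cokernel argument, symmetrize a locally supported perturbation $Y$ to $Y^\symm = Y + \psi_*(Y)$, and apply Sard--Smale. The reduction to $\mcM(S_{\overline\gamma},S_{\underline\gamma};H,J)$ followed by a generic choice of Morse functions is also how the paper proceeds.

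There is, however, a genuine gap in your symmetrization step. You choose an injective point $(s_0,\theta_0)$ with $u(s_0,\theta_0)\notin\widehat W_\fixed$ and a neighbourhood $U\ni u(s_0,\theta_0)$ with $U\cap\psi(U)=\emptyset$, and then assert that the $L^2$-pairing of $\eta$ with $Y^\symm$ is non-zero. But $Y^\symm$ is supported on $U\cup\psi(U)$, and nothing you have said prevents $u$ from passing through $\psi(U)$: the injectivity of $(s_0,\theta_0)$ controls only the preimage of $U$, not of $\psi(U)$. If $u(s_1,\theta_1)\in\psi(U)$ for some $(s_1,\theta_1)$, the term $\psi_*(Y)$ contributes to the pairing as well and may cancel the contribution from $Y$. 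The paper resolves this with two further ingredients. First, it introduces the set
\[
 S(u)=\bigl\{(s,\theta)\ \big|\ \partial_s u\neq 0,\ u(s,\theta)\neq\overline\gamma(\theta),\underline\gamma(\theta),\ \psi(u(s,\theta))\notin u(\RR,\theta)\bigr\}
\]
and shows (by the same argument as \cite[Lemma~4.5]{FHS}, applied to $\psi\circ u$ versus $u$, using Lemma~\ref{lem_fixed} to exclude $\im(u)\subset\widehat W_\fixed$) that $S(u)$ is open and dense; one then chooses $(s_0,\theta_0)\in R(u)\cap S(u)$ with $\eta(s_0,\theta_0)\neq 0$. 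Second, it exploits the \emph{time-dependence} of $J$: the cutoff $\rho$ lives on $S^1\times\widehat W$ and is localized near $(\theta_0,u(s_0,\theta_0))$, so even if $\psi(u(s_0,\theta_0))=u(s_1,\theta_1)$ with $\theta_1\neq\theta_0$, the support of $\psi_*(Y)$ (which sits at time $\theta_0$) does not see the curve at time $\theta_1$. The condition $(s_0,\theta_0)\in S(u)$ rules out the remaining case $\theta_1=\theta_0$. Your sketch uses neither the set $S(u)$ nor the time-localization, so as written the cokernel argument does not close.
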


\begin{proof}
Fix two generators $\overline \gamma_p$ and $\underline \gamma_q$.
For the most part, we have to prove the existence of a comeagre set $\mcJ_{\symm, \reg}$ such that the moduli space $\mcM(S_{\overline \gamma}, S_{\underline \gamma}; H, J)$ appearing in the fibered product \eqref{eq_fib_prod} is transversally cut out. Then, the statement follows from a generic choice of Morse functions as in \cite[Lemma~3.6]{BO_mb}.

To prove this, much of the proof of \cite[Proposition~3.5 (ii)]{BO_mb} can be followed very closely.
We will only point out the parts that are different.
The most important difference is that for all sets of almost complex structures (like $\mcJ^\ell $, $\mcJ^\ell (H)$, etc.), we additionally demand that $J\in J_\symm$. We then denote the corresponding sets by $\mcJ^\ell _\symm$, $\mcJ^\ell _\symm(H)$, etc. 
 
 So we take the universal moduli space
 \[
  \mcM(S_{\overline \gamma}, S_{\underline \gamma}, H, \mcJ^\ell _\symm(H)) = \left\{(u,J) \; \middle| \; J\in \mcJ^\ell _\symm(H), u\in \mcM(S_{\overline \gamma}, S_{\underline \gamma}, H,J) \right\}.
 \]
 We want to prove that this space is transversally cut out. Then we define $\mcJ_{\symm, \reg}$ as the set of regular values of the projection to the second factor.
 
 As usual, $\mcM(S_{\overline \gamma}, S_{\underline \gamma}, H, \mcJ^\ell _\symm(H))$ can be written as the preimage $\bar \dd_H^{-1}(0)$ under the section $\bar \dd_H$ of a Banach vector bundle $\mcE \rightarrow \mcB \times \mcJ^\ell _\symm(H)$. We do not write the details here, as this part is entirely analogous to \cite{BO_mb}.
 
 It remains to show that the vertical differential 
 \begin{align*}
  D \bar \dd_H(u,J): T_u \mcB \times T_J\mcJ^\ell _\symm(H) & \longrightarrow \mcE_{(u,J)} \\
  (\zeta,Y) & \longmapsto D_u \zeta +  Y_\theta(u)(\dd_\theta u-X_H(u))
 \end{align*}
 is surjective. Again as in \cite{BO_mb}, $D_u$ (the linearization of the Cauchy--Riemann operator) is Fredholm, so the range of $D \bar \dd_H(u,J)$ is closed. We have to show that it is also dense, and this is where some differences to \cite{BO_mb} appear. 
 
 Let $\eta$ be in the cokernel of $D \bar \dd_H(u,J)$, which means
 \begin{equation} \label{eq_coker}
  \int_{\RR\times S^1} \langle \eta, D_u \zeta\rangle e^{d|s|} ds \, d\theta = 0, \qquad \int_{\RR\times S^1} \langle \eta, Y_\theta(u)(\dd_\theta u - X_H(u)) \rangle e^{d|s|} ds \, d\theta = 0
 \end{equation}
 for all $\zeta, Y$, where $d>0$ is some exponential weight. The first equation is still the same as in \cite{BO_mb}. It implies that, assuming $\eta\not\equiv 0$, the set $\{(s,\theta)\mid \eta(s,\theta)\neq 0\}$ is open and dense. Also, by \cite[Lemma~4.5]{FHS}, the set of \emph{regular points}
 \[
  R(u)\defeq \left\{(s,\theta)\in \RR\times S^1 \mid \dd_s u(s,\theta)\neq 0, u(s,\theta)\neq \overline \gamma(\theta),\underline \gamma(\theta), u(s,\theta)\notin u(\RR\setminus\{s\},\theta)\right\}
 \]
 is open and dense for any $u\in \bar \dd^{-1}(0)$. 
 
 Furthermore, we claim that the set 
 \[
  S(u) \defeq \left\{(s,\theta)\in \RR\times S^1 \mid \dd_s u(s,\theta)\neq 0, u(s,\theta)\neq \overline \gamma(\theta),\underline \gamma(\theta), \psi(u(s,\theta)) \notin u(\RR, \theta) \right\} 
 \]
 is open and dense. Indeed, this can be proven in exactly the same way as \cite[Lemma~4.5]{FHS}, one just has to replace $u$ by $\psi\circ u$ at the right places and use that $\im(u) \not\subset \widehat W_\fixed$ by Lemma~\ref{lem_fixed}.
 The upshot is that we can find a point $(s_0,\theta_0)\in R(u) \cap S(u)$ with $\eta(s_0,\theta_0)\neq 0$. 
 
 Now, it is always possible to choose a matrix $Y_{\theta_0}(u(s_0,\theta_0))\in T_{J(u(s_0,\theta_0))} \mcJ^\ell (H)$ which maps the vector $J(u(s_0,\theta_0))(\dd_\theta u - X_H(u))$ to $\eta(u(s_0,\theta_0))$ (see e.g.\ \cite[Lemma~3.2.2]{Mc-Sa}). Letting $\rho:S^1\times \widehat W \to [0,1]$ be a time-dependent cutoff function supported near $(\theta_0, u(s_0,\theta_0))$, we define $Y\defeq \rho\cdot Y_{\theta_0}(u(s_0,\theta_0))$. Then
 \begin{equation} \label{eq_eta_contr}
  \int_{\RR\times S^1} \langle \eta, Y_\theta(u)(\dd_\theta u - X_H(u)) \rangle e^{d|s|} ds \, d\theta \neq 0.
 \end{equation}

 However, $Y$ is a priori not contained in the tangent space to $J^\ell _\symm(H)$. For this, we have to make it symmetric under $\psi$. Hence we define $Y^\symm \defeq Y+\psi_*(Y)$. We want to show that \eqref{eq_eta_contr} is still true with $Y$ replaced by $Y^\symm$.
 
 By construction, $Y^\symm$ is supported near the two point $(\theta_0, u(s_0,\theta_0))$ and $(\theta_0, \psi(u(s_0,\theta_0))$.
 If $\psi(u(s_0,\theta_0))\notin \im(u)$, we are done, because $\psi_*(Y)$ does not affect \eqref{eq_eta_contr} (provided the cutoff function $\rho$ was chosen well). Otherwise, since $(s_0,\theta_0) \in  R(u) \cap S(u)$, we know that $\psi(u(s_0,\theta_0))=u(s_1,\theta_1)$ for some $\theta_1\neq \theta_0$. But since $\rho$ is time-dependent and localized near $\theta_0$, $\psi_*(Y)$ still does not affect \eqref{eq_eta_contr}. Thus, \eqref{eq_eta_contr} is indeed true with $Y$ replaced by $Y^\symm\in T_J \mcJ^\ell _\symm(H)$, which contradicts \eqref{eq_coker}.
 
 This shows that $D\bar \dd_H(u,J)$ is surjective, hence the universal moduli space is cut out transversally. Define the set $\mcJ_{\symm, \reg}$ as the set of regular values under its projection to the second factor. By Sard's theorem, this set is comeagre and by construction, $\mcM(S_{\overline \gamma}, S_{\underline \gamma}; H, J)$ is cut out transversally for $J\in \mcJ_{\symm, \reg}$.
\end{proof}

\subsection{Conclusion} \label{sec_conclusion}

\begin{cor} \label{cor_d_van}
 Let $\overline \gamma_p, \underline \gamma_q$ be two generators with $\mu(\overline \gamma_p) - \mu(\underline \gamma_q)=1$, such that the underlying orbits lie in $\widehat W_\fixed$. Then, the differential satisfies $\left\langle \dd(\overline \gamma_p), \underline \gamma_q \right\rangle = 0$.
\end{cor}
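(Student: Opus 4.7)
The plan is to combine Propositions \ref{prop_reparametrization} and \ref{prop_transversality} with the observation that $\psi$ acts trivially on the $S^1$-families of orbits sitting inside $\widehat W_\fixed$, so that the map $u \mapsto \psi \circ u$ descends to a fixed-point-free involution on the zero-dimensional moduli space counted by $\langle \dd(\overline \gamma_p), \underline \gamma_q \rangle$.

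First I would dispose of the case $S_{\overline \gamma} = S_{\underline \gamma}$: here the differential coefficient is a pure Morse count on a circle with a perfect Morse function and $\ZZ_2$-coefficients, hence already vanishes, as discussed before Lemma~\ref{lem_rank_1}. So assume $S_{\overline \gamma} \neq S_{\underline \gamma}$, and pick $J \in \mcJ_{\symm,\reg}$ by Proposition~\ref{prop_transversality} so that $\mcM(\overline \gamma_p, \underline \gamma_q;H,J)$ is cut out transversally and hence, being $0$-dimensional by the index hypothesis, a finite set. The coefficient $\langle \dd(\overline \gamma_p), \underline \gamma_q \rangle$ is the cardinality of this set mod $2$.

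Now I would set up the involution. Because both $\overline \gamma$ and $\underline \gamma$ are multiples of $\gamma^\pm$, their geometric images lie in the $w_0=w_1=0$ locus, which is pointwise fixed by $\psi$; in particular $\psi$ acts as the identity on the $S^1$-families $S_{\overline \gamma}$ and $S_{\underline \gamma}$, so the chosen critical points $p,q$ and their stable/unstable manifolds are automatically $\psi$-invariant. Given any $u \in \widehat \mcM(S_{\overline \gamma}, S_{\underline \gamma}; H, J)$, Proposition~\ref{prop_reparametrization} gives that $\psi \circ u$ is again a Floer cylinder with the same asymptotic orbits, and clearly $\overline{\ev}(\psi \circ u) = \overline{\ev}(u)$ and $\underline{\ev}(\psi \circ u) = \underline{\ev}(u)$. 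Hence $u \mapsto \psi \circ u$ descends to a well-defined involution $\Psi$ on the fibered product \eqref{eq_fib_prod}.

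To finish, I would check that $\Psi$ has no fixed points. A fixed point would require $\psi \circ u(s,\theta) = u(s+s_0,\theta)$ for some $s_0 \in \RR$ after passing to the $\RR$-quotient, and this is exactly what the second half of Proposition~\ref{prop_reparametrization} rules out (using Lemma~\ref{lem_fixed} to exclude $u \subset \widehat W_\fixed$). Thus $\mcM(\overline \gamma_p, \underline \gamma_q;H,J)$ decomposes into free $\ZZ_2$-orbits under $\Psi$, so its cardinality is even and $\langle \dd(\overline \gamma_p), \underline \gamma_q \rangle = 0$ in $\ZZ_2$.

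The main (already resolved) obstacle is achieving transversality within the restricted class $\mcJ_\symm$, which is handled by Proposition~\ref{prop_transversality}; the only subtlety in the conclusion itself is verifying that $\psi$ preserves all the auxiliary Morse-theoretic data, and this is trivial because $\psi$ restricts to the identity on the critical circles in question.
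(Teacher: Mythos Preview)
Your argument is correct and follows the same route as the paper: use Proposition~\ref{prop_transversality} to work with a $\psi$-symmetric regular $J$, then invoke Proposition~\ref{prop_reparametrization} (and Lemma~\ref{lem_fixed}) to obtain a fixed-point-free involution $u\mapsto\psi\circ u$ on the zero-dimensional moduli space, forcing the count to be even. Your write-up is in fact more detailed than the paper's---you explicitly handle the case $S_{\overline\gamma}=S_{\underline\gamma}$ and spell out why $\psi$ preserves the evaluation maps and Morse data (because $\psi$ is the identity on the critical circles)---but the underlying idea is identical.
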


\begin{proof}
By Proposition~\ref{prop_transversality}, we can assume that $J$ is symmetric under $\psi$. Then, Lemma~\ref{lem_fixed} and Proposition~\ref{prop_reparametrization} tell us that the elements in \eqref{eq_fib_prod} come in pairs, namely by replacing the Floer cylinder $u\in \mcM(S_{\overline \gamma}, S_{\underline \gamma}; H,J)$ with $\psi\circ u$. Hence the algebraic count is an even number, and thus vanishes for $\ZZ_2$-coefficients.
\end{proof}

Looking again at Table~\ref{table_generators}, this shows that all differentials involving only the orbits $N\gamma^+$ and $N\gamma^-$ vanish. This proves the claim made at the end of Section~\ref{sec_perturbation}. Hence, the rank of symplectic homology in degrees $4, \ldots, 2\ell$ is two (and again in degrees $2\ell+5, \ldots, 4\ell+2$, etc).

Up to here, we already know enough to distinguish the contact structures of $\Sigma_\ell $ for different $\ell$, but we can get another observation almost for free:

\begin{lem} \label{lem_remaining_degrees}
 For $N\in \NN$ and $j\in \{-1,0,1,2\}$, the groups $SH^+_{(2\ell+2)N+j}(W)$ are isomorphic to $\ZZ_2$.
\end{lem}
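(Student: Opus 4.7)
The plan is to combine the upper bound from Lemma~\ref{lem_rank_1} with a $\psi$-symmetry argument for the lower bound. Table~\ref{table_generators} gives chain rank exactly two in each of the four degrees $(2\ell+2)N+j$ with $j\in\{-1,0,1,2\}$: for $j=-1,2$ the generators are covers of $\gamma^\pm$ and lie in $\widehat W_\fixed$, while for $j=0,1$ they are $(N\gamma^{0,+})_\bullet$ and $(N\gamma^{0,-})_\bullet$, which do not. I would first isolate this block as a subcomplex. At the neighbouring degrees $j=-2,3$ the only chain generators are again covers of $\gamma^\pm$ (namely $(N\ell)\gamma^+_m,(N\ell-1)\gamma^-_m$ and $(N\ell+1)\gamma^+_M,(N\ell)\gamma^-_M$), so the boundary differentials $SC^+_{(2\ell+2)N-1}\to SC^+_{(2\ell+2)N-2}$ and $SC^+_{(2\ell+2)N+3}\to SC^+_{(2\ell+2)N+2}$ vanish by Corollary~\ref{cor_d_van}. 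Inside the block, the differential $SC^+_{(2\ell+2)N+1}\to SC^+_{(2\ell+2)N}$ from $(N\gamma^{0,\pm})_M$ to $(N\gamma^{0,\pm})_m$ is zero: the four orbits all lie on $\{w_2=w_3=0\}$ where $K\equiv 1$, so Ustilovsky's perturbation leaves their periods at $N\ell\pi$, all four Hamiltonian orbits share the same action, and the standard action argument rules out Floer cylinders between distinct orbits of equal action; the Morse contribution vanishes for a perfect Morse function on $S^1$ with $\ZZ_2$-coefficients.

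Consequently the block splits as a direct sum of two 2-term complexes
\[
 SC^+_{(2\ell+2)N}\xrightarrow{d_0}SC^+_{(2\ell+2)N-1}\qquad\text{and}\qquad SC^+_{(2\ell+2)N+2}\xrightarrow{d_2}SC^+_{(2\ell+2)N+1},
\]
each of source and target rank~$2$, with block homology equal to $2-\mathrm{rank}(d_0)$ in degrees $(2\ell+2)N-1,(2\ell+2)N$ and $2-\mathrm{rank}(d_2)$ in degrees $(2\ell+2)N+1,(2\ell+2)N+2$. Lemma~\ref{lem_rank_1} forces $\mathrm{rank}(d_0),\mathrm{rank}(d_2)\geq 1$, and to finish it suffices to show these ranks are also $\leq 1$. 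I would do this by picking $J\in\mcJ_{\symm,\reg}$ from Proposition~\ref{prop_transversality} and Morse functions on the $S^1$-families compatible with $\psi$. When $\ell$ is even, $\psi$ genuinely exchanges the orbit families $\gamma^{0,+}\leftrightarrow\gamma^{0,-}$ while fixing $\gamma^\pm$, so post-composition with $\psi$ yields a bijection between rigid Floer cylinders from $(N\gamma^{0,+})_m$ to any $\gamma^\pm$-target and those from $(N\gamma^{0,-})_m$ to the same target. Writing $C=(N\gamma^{0,+})_m$, $D=(N\gamma^{0,-})_m$, $A=(N\ell)\gamma^+_M$, $B=(N\ell-1)\gamma^-_M$, this yields $\langle d_0 C,A\rangle=\langle d_0 D,A\rangle$ and $\langle d_0 C,B\rangle=\langle d_0 D,B\rangle$, so $C+D\in\ker d_0$ and $\mathrm{rank}(d_0)\leq 1$; the identical argument handles $d_2$.

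The main obstacle is that for $\ell$ odd, $\psi$ preserves each of $\gamma^{0,+},\gamma^{0,-}$ individually (by a half-period time shift on the $S^1$-families) rather than exchanging them, and the bijection above does not arise directly. The cleanest workaround is to work instead with the alternative involution $\psi'\colon(w_0,w_1,w_2,w_3)\mapsto(w_0,-w_1,w_2,w_3)$, which swaps $\gamma^{0,+}\leftrightarrow\gamma^{0,-}$ with no time shift and fixes $\gamma^\pm$ for every $\ell$. For irrational Ustilovsky parameter $\epsilon$ the only simple Reeb orbits contained in the $\psi'$-fixed locus $\{w_1=0\}\cap\widehat W$ are $\gamma^\pm$ (any other configuration of $w_0,w_2,w_3$ fails to close up under the Reeb flow), so the analogues of Lemma~\ref{lem_fixed} and Propositions~\ref{prop_reparametrization}--\ref{prop_transversality} for $\psi'$ go through with only cosmetic changes, and the conclusion $\mathrm{rank}(d_0)=\mathrm{rank}(d_2)=1$, i.e.\ $SH^+_{(2\ell+2)N+j}(W)\cong\ZZ_2$ for $j\in\{-1,0,1,2\}$, then holds uniformly in $\ell$.
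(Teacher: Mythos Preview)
Your overall architecture coincides with the paper's: use Lemma~\ref{lem_rank_1} for the upper bound, kill the middle differential by the equal-action argument, and use the involution $\psi'(w_0,w_1,w_2,w_3)=(w_0,-w_1,w_2,w_3)$ (which the paper calls $\tilde\psi$) to force $\mathrm{rank}(d_0),\mathrm{rank}(d_2)\leq 1$. The detour through $\psi$ for $\ell$ even is unnecessary; the paper goes directly to $\tilde\psi$, which swaps $\gamma^{0,+}\leftrightarrow\gamma^{0,-}$ uniformly in $\ell$.

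There is, however, a genuine gap. Your block decomposition (vanishing of $\partial$ at degrees $j=-2,3$) invokes Corollary~\ref{cor_d_van}, which is proved for a $\psi$-symmetric $J\in\mcJ_{\symm,\reg}$. Your rank bound on $d_0,d_2$ requires a $\psi'$-symmetric $\tilde J$. These are in general different almost complex structures, and you never explain why the two pieces of information can be combined. The paper handles this explicitly: the continuation isomorphism $\Phi$ from $(H,J)$ to $(H,\tilde J)$ conjugates the differentials, so $\mathrm{rank}(\partial_k)$ is the same for $J$ and $\tilde J$; one then computes the rank for $\tilde J$ and feeds it back into the complex for $J$, where Corollary~\ref{cor_d_van} is available. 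Without this step (or the equivalent observation that $SH^+$ is independent of regular $J$), your argument does not close.

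A second, smaller issue: the assertion that ``the analogues of Lemma~\ref{lem_fixed} and Propositions~\ref{prop_reparametrization}--\ref{prop_transversality} for $\psi'$ go through with only cosmetic changes'' is an overstatement. The $\psi'$-fixed locus $\{w_1=0\}\cap\widehat W$ is a four-real-dimensional variety, not $\CC^*$, so the homotopy argument of Lemma~\ref{lem_fixed} does not carry over. Fortunately you do not actually need it: since the asymptotics $\gamma^{0,\pm}$ lie outside the $\psi'$-fixed locus, no relevant Floer cylinder can be contained there, and the transversality proof of Proposition~\ref{prop_transversality} goes through for those specific moduli spaces. The paper is careful on exactly this point, only claiming regularity for $\tilde J$ on moduli spaces where at least one asymptotic orbit lies outside $\mathrm{Fix}(\tilde\psi)$.
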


\begin{proof}
 We already know from Lemma~\ref{lem_rank_1} that these groups can have at most rank $1$. To see that they do not vanish, define the map
 \[
 \tilde \psi: \CC^4 \longrightarrow \CC^4, \qquad \tilde \psi(w_0,w_1,w_2,w_3) = (w_0,-w_1,w_2,w_3).
 \]
 As with $\psi$, this map descends to a $\ZZ_2$-symmetry on $\widehat W$. Furthermore, it exchanges the orbits $N\gamma^{0,+}$ and $N\gamma^{0,-}$, while leaving all other orbits fixed. In analogy to Proposition~\ref{prop_transversality}, we can find an almost complex structure $\tilde J$ with $\tilde J=\tilde \psi_*(\tilde J)$ such that the moduli spaces $\mcM(S_{\overline \gamma},S_{\underline \gamma}; H,\tilde J)$ are regular if at least one of the orbits $\overline\gamma, \underline \gamma$ lies outside of the fixed point set of $\tilde \psi$ (so that Floer cylinders in the fixed point set are excluded).
 
 To justify the switch to a different almost complex structure, consider the continuation homomorphism $\Phi$ from $(H,J)$ to $(H,\tilde J)$. As we change only the almost complex structure, not the Hamiltonian, $\Phi$ can be represented in each degree by an invertible matrix. Moreover, $\Phi$ intertwines the differential, i.e.\ $\dd \circ \Phi = \Phi \circ \dd$. Hence, the new differential is a conjugation of the old one, and as such has the same rank. For this lemma, only the rank is of interest, so we can indeed switch to another (regular) almost complex structure. 
 
 With $\tilde J$ (and suitable Morse functions on $S_{N\gamma^{0,\pm}}$), we get that 
 \[
  \left\langle \dd N\gamma^{0,+}_m, N\ell \gamma^+_M \right\rangle = \left\langle \dd N\gamma^{0,-}_m, N\ell \gamma^+_M \right\rangle,
 \]
 as $\tilde \psi$ interchanges all contributing cylinders. Similarly, 
\[
 \left\langle \dd(N\gamma^{0,+}_m), N(\ell -1) \gamma^-_M \right\rangle = \left\langle \dd(N\gamma^{0,-}_m), N(\ell -1) \gamma^-_M \right\rangle,
\]
 hence the map 
 \[
  \dd:SC^+_{N(2\ell+2)}\cong (\ZZ_2)^2 \longrightarrow SC^+_{N(2\ell+2)-1}\cong (\ZZ_2)^2
 \]
 is represented by a matrix of the form
 $\begin{pmatrix}
  a & a \\
  b & b
 \end{pmatrix}
 \in \ZZ_2^{2\times 2}.$
 This matrix has rank at most one, but it cannot have rank zero, as this would contradict Lemma~\ref{lem_rank_1}.
 
 Now, note that we cannot have any differential from $N\gamma^{0,+}_M$ to $N\gamma^{0,-}_m$ or from $N\gamma^{0,-}_M$ to $N\gamma^{0,+}_m$. The easiest way to see this is that the underlying orbits have exactly the same period, hence exactly the same action, while the differential strictly decreases the action.
 
 This proves the lemma for $j=-1,0$. For $j=1,2$, note that, for the same reasons as above, 
 \[
  \dd:SC^+_{N(2\ell+2)+2}\cong (\ZZ_2)^2 \longrightarrow SC^+_{N(2\ell+2)+1}\cong (\ZZ_2)^2
 \]
 is represented by a matrix of the form
 $\begin{pmatrix}
  a & b \\
  a & b
 \end{pmatrix}
 \in \ZZ_2^{2\times 2}.$
 As this matrix also has rank at most one and rank zero would again contradict Lemma~\ref{lem_rank_1}, the claim follows. 
\end{proof}

Summing up, we have proven:

\begin{thm} \label{thm_n3}
 Let $W$ be a Liouville filling of $\Sigma(2\ell,2,2,2)$, $\ell\geq 2$ with $c_1(W)=0$. 
 The positive part of symplectic homology of $W$ with coefficients in $\ZZ_2$ is given by
\begin{equation*}
 SH^+_k(W) \cong
 \left\{
 \begin{aligned}
  \ZZ_2 & \qquad \mbox{if } k=2,3 \mbox{ or } k=(2\ell+2)N+j \mbox{ for any }N\in\NN, j\in\{-1,0,1,2\} \\
  (\ZZ_2)^2 & \qquad\mbox{if } k \geq 4, \mbox{ unless } k \mbox{ is as above} \\
  0 & \qquad\mbox{if } k \leq 1.
  \end{aligned}
 \right.
\end{equation*}
\end{thm}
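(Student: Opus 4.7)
The plan is to assemble the ingredients developed in Sections~\ref{sec_MB}--\ref{sec_conclusion}. By Lemma~\ref{lem_indep_filling}, $SH^+_*(W)$ is independent of the Liouville filling under the given hypothesis on $c_1$, so it suffices to compute it for the standard filling from \eqref{eq_W} using the perturbed Morse--Bott complex of Section~\ref{sec_perturbation}, whose generators are listed in Table~\ref{table_generators}.

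The first step is a structural observation: the chain complex decomposes, up to vanishing connecting differentials, into two kinds of blocks according to the residue of the degree modulo $2\ell+2$. The indices \eqref{ustil_ind_1}--\eqref{ustil_ind_3} show that the $\gamma^{0,\pm}$-generators occupy only residues $0$ and $1$, while every other generator is a multiple of $\gamma^+$ or $\gamma^-$. The ``exceptional'' four-term subcomplex at residues $\{-1,0,1,2\}$ (for each $N\in\NN$) contains all four $\gamma^{0,\pm}_{m,M}$ together with the adjacent $\gamma^\pm$-multiples, while the ``non-exceptional'' residues $\{3,4,\ldots,2\ell\}$ are populated only by $\gamma^\pm$-multiples. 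The two boundary transitions between blocks---from residue $3$ down to residue $2$, and from residue $2\ell+1$ down to residue $2\ell$---involve only $\gamma^\pm$-generators on both sides, so their differentials vanish by Corollary~\ref{cor_d_van}, fully decoupling the two kinds of blocks.

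The theorem then follows by treating each block. In a non-exceptional degree $k \geq 4$, the chain group has rank two, both generators are $\gamma^\pm$-multiples, and every differential into or out of them is between $\gamma^\pm$-orbits (either within the same block or across the vanishing boundary above); Corollary~\ref{cor_d_van} kills them all, so $SH^+_k \cong (\ZZ_2)^2$. For each exceptional block ($N \geq 1$), Lemma~\ref{lem_remaining_degrees} directly yields $SH^+_{(2\ell+2)N+j} \cong \ZZ_2$ for $j \in \{-1,0,1,2\}$. Finally, the low-degree range is immediate: $SC^+_{\leq 1} = 0$; in degree $2$ (resp.\ $3$) the unique generator is $\gamma^+_m$ (resp.\ $\gamma^+_M$), and any incoming differentials come either from same-orbit generators (ruled out by \cite[Proposition~2]{BO_seq}) or from further $\gamma^\pm$-generators (ruled out by Corollary~\ref{cor_d_van}), so $SH^+_2 \cong SH^+_3 \cong \ZZ_2$. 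Since the analytic substance has already been absorbed into Corollary~\ref{cor_d_van} and Lemma~\ref{lem_remaining_degrees}, the only remaining obstacle is the bookkeeping for the residue decoupling, in particular the verification that each boundary transition connects only $\gamma^\pm$-generators on both sides so that Corollary~\ref{cor_d_van} applies.
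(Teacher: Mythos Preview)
Your proof is correct and follows the same strategy as the paper: the paper also concludes Theorem~\ref{thm_n3} by combining Corollary~\ref{cor_d_van} (vanishing of all $\gamma^\pm$-to-$\gamma^\pm$ differentials, giving rank two in the non-exceptional degrees) with Lemma~\ref{lem_remaining_degrees} (rank one in the exceptional degrees), and then invoking Lemma~\ref{lem_indep_filling} for filling-independence. Your explicit residue-mod-$(2\ell+2)$ block decomposition and verification that the boundary transitions at residues $3\to 2$ and $2\ell+1\to 2\ell$ involve only $\gamma^\pm$-generators is exactly the bookkeeping the paper leaves implicit in the sentence ``the rank of symplectic homology in degrees $4,\ldots,2\ell$ is two (and again in degrees $2\ell+5,\ldots,4\ell+2$, etc.)''; one small imprecision is that in the non-exceptional range you should also cite \cite[Proposition~2]{BO_seq} for the same-orbit differentials $\gamma_M\to\gamma_m$, as Corollary~\ref{cor_d_van} only covers distinct orbit families---but you clearly know this, since you invoke it for degrees $2$ and $3$.
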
 

The case $\ell=1$ is even easier and can be read off directly from \eqref{eq_SC_plus_mb}. For a Liouville filling $W$ of $\Sigma(2,2,2,2)$ with $c_1(W)=0$, we get
\begin{equation*}
 SH^+_k(W) \cong
 \left\{
 \begin{aligned}
  \ZZ_2 & \qquad \mbox{if } k=2 \mbox{ or } k\geq 4 \\
  0 & \qquad\mbox{else.}
  \end{aligned}
 \right.
\end{equation*}

Together with Lemma~\ref{lem_indep_filling}, Theorem~\ref{thm_n3} implies:

\begin{cor}
 The Brieskorn manifolds $\Sigma(2\ell,2,2,2)\cong S^2\times S^3$ with their natural contact structure are pairwise non-contactomorphic. 
\end{cor}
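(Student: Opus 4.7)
The plan is to combine the filling-independence statement of Lemma~\ref{lem_indep_filling} with the explicit rank computations of Theorem~\ref{thm_n3} and the $\ell=1$ formula immediately following it. Once $SH^+_*(W;\ZZ_2)$ is known to be independent of the Liouville filling, it becomes a bona fide contact invariant of $\Sigma_\ell$, and the only remaining task is to read off $\ell$ from the graded $\ZZ_2$-vector space $SH^+_*$.

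First I would verify that Lemma~\ref{lem_indep_filling} applies to every $\ell\geq 1$. Its key hypothesis is the existence of a non-degenerate perturbation of the contact form whose Reeb orbits all have Conley--Zehnder index strictly greater than $3-n=0$. This is immediate from the index formulas \eqref{ustil_ind_1}--\eqref{ustil_ind_3} of Ustilovsky's perturbation: the minimum over all $N\geq 1$ of $\mu_\CZ(N\gamma^{0,\pm})$, $\mu_\CZ(N\gamma^+)$, $\mu_\CZ(N\gamma^-)$ is at least $2$. Hence $SH^+_*(W;\ZZ_2)$ does not depend on the Liouville filling $W$ with $c_1(W)|_{\pi_2(W)}=0$ and is therefore an invariant of the contact manifold $\Sigma_\ell$.

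Next I would distinguish the $\Sigma_\ell$'s by comparing these invariants. For $\ell=1$ the formula stated after Theorem~\ref{thm_n3} gives $\dim_{\ZZ_2} SH^+_k(W)\leq 1$ in every degree $k$. For $\ell\geq 2$, Theorem~\ref{thm_n3} gives $\dim_{\ZZ_2} SH^+_k(W)=2$ in every degree $4\leq k\leq 2\ell$, while $\dim_{\ZZ_2} SH^+_{2\ell+1}(W)=1$ (coming from $N=1$, $j=-1$ in the exceptional degrees $(2\ell+2)N+j$). Consequently $\ell=1$ is singled out as the unique value for which $SH^+_*$ has rank at most one in every degree, and for $\ell\geq 2$ the integer $\ell$ is recovered via
\[
 \ell \;=\; \tfrac{1}{2}\bigl(\min\{k\geq 4 \mid \dim_{\ZZ_2} SH^+_k(W)=1\}-1\bigr).
\]
Any contactomorphism $\Sigma_\ell\to \Sigma_{\ell'}$ would induce a graded $\ZZ_2$-linear isomorphism of $SH^+_*$, forcing $\ell=\ell'$.

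I do not expect a serious obstacle: the corollary is a bookkeeping consequence of the two main inputs, and the only point needing care is the index bound $\mu_\CZ>3-n$ required for Lemma~\ref{lem_indep_filling}, which follows at once from the explicit Conley--Zehnder indices already established in Section~\ref{sec_perturbation}.
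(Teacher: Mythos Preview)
Your proposal is correct and follows essentially the same approach as the paper: the corollary is stated immediately after Theorem~\ref{thm_n3} with the single remark ``Together with Lemma~\ref{lem_indep_filling}, Theorem~\ref{thm_n3} implies,'' so the paper leaves the rank-reading argument implicit. Your extra step of explicitly extracting $\ell$ from the first degree $\geq 4$ where the rank drops to one is a clean way to make this precise, and your verification of the index bound $\mu_\CZ>0$ via the Ustilovsky perturbation is an acceptable alternative to the Morse--Bott argument the paper uses inside the proof of Lemma~\ref{lem_indep_filling}.
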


\subsection{A generalization: $\Sigma(\ell p, p, 2, 2)$}

The methods of Section~\ref{sec_perturbation} to \ref{sec_conclusion} can also be applied to $\Sigma(\ell p,p,2,2)$, $p\geq 2$, at least for $p$ even. These manifolds (suggested to me by Otto van Koert) provide further examples for contact manifolds that have the same contact homology (for $p$ fixed), but for which symplectic homology can distinguish the contact structures for different values of $\ell$. Form this point of view, the work above was the special case $p=2$. We sketch the main points for the general case:

\begin{itemize}
 \item Application of Randell's algorithm shows that
 \[
  H_2(\Sigma(\ell p,p,2,2); \ZZ) \cong \ZZ^{p-1}.
 \]
 Using this result and the classification of simply-connected spin $5$-manifolds \cite{Smale}, we get that $\Sigma(\ell p,p,2,2)$ is diffeomorphic to a connected sum of $(p-1)$ copies of $S^2\times S^3$.
 
 \item Analogous to Section~\ref{sec_perturbation}, we use the coordinate change
 \[
  \Sigma(\ell p,p,2,2) \cong \{w \in \CC^4 \mid w_0^{\ell p} + w_1^p + 2 w_2 w_3 = 0\}
 \]
 and perturb the contact form. The resulting simple closed Reeb orbits are
\begin{align} 
 \gamma^+(t) &= \left(0, 0, e^{2it(1+\epsilon)}, 0\right), \qquad 0\leq t\leq\frac{\pi}{1+\epsilon}, \\
 \gamma^-(t) &= \left(0, 0, 0, e^{2it(1-\epsilon)}\right), \qquad 0\leq t\leq\frac{\pi}{1-\epsilon}
\end{align}
 and
\begin{equation}
 \gamma^{0,k}(t) = \left(re^{4it/\ell p}, \zeta^{2k+1} r^\ell e^{4it/p},0, 0\right), \qquad 0\leq t\leq \ell p \frac{\pi}{2},
\end{equation}
 where $r>0$ is the constant satisfying $r^{2\ell}+r^2=1$, $\zeta = e^{\pi i/p}$ is a primitive $2p$-th root of unity and $k=0,\ldots, p-1$. The main difference from Section~\ref{sec_perturbation} is that we get $p$ different simple orbits living in the first two coordinates. 
 The Conley--Zehnder indices of these orbits (and their multiple covers) are similar to \eqref{ustil_ind_1} to \eqref{ustil_ind_3}, namely
\begin{align}
 \mu_\CZ(N\gamma^{0,k}) &= 2N+2N\ell \quad \stackrel{N'\defeq N\ell }{=} \quad 2\frac{N'}{\ell }+2N' \\
 \mu_\CZ(N\gamma^+) &= 2\ceil*{\frac{2N}{\ell p}} +2\ceil*{\frac{2N}{p}} - 2 \\
 \mu_\CZ(N\gamma^-) &= 2\floor*{\frac{2N}{\ell p}} +2\floor*{\frac{2N}{p}} + 2.
\end{align}
At this point, one sees that contact homology cannot distinguish different values of $\ell$. Indeed, checking the indices gives
\[
 CH_k(\Sigma(\ell p,p,2,2);\QQ) \cong 
 \begin{cases}
  \QQ^{p-1} & \qquad \text{for $k=2$}\\
  \QQ^p & \qquad \text{for $k\geq 4$ even} \\
  0 & \qquad \text{else.}
 \end{cases}
\]

\item Each of these orbits gives two generators for $SH_*^+$, corresponding to minimum and maximum of a Morse function on $S^1$. Putting them in a table analogous to Table~\ref{table_generators}, we see that there are $p-1$ generators in degrees $2$ and $3$ and $p$ generators in all higher degrees. Moreover, the generators in degrees $2N(\ell +1)$ and $2N(\ell +1)+1$ (for $N\in \NN$) come exclusively from the orbits $N \gamma^{0,k}$, while in other degrees, they come from $N\gamma^+$ and $N\gamma^-$.

\item Now, assume that $p$ is even and that we use $\ZZ_2$-coefficients. Again, we get an essential ingredient from the full Morse--Bott setup of Section~\ref{sec_MB}. After working through this setup (which now involves three critical submanifolds), one sees that there are only $p-1$ generators in degrees $2N(\ell +1)+j$ for $N\in\NN$ and $j\in\{-1,0,1,2\}$. Hence, there has to be a non-zero differential involving the generators from $N\gamma^{0,k}$.

\item As in Sections \ref{sec_idea} and \ref{sec_transversality}, one can show that the Floer cylinders between orbits $N\gamma^+$ and $N\gamma^-$ come in pairs. Hence, there is no differential between these orbits over $\ZZ_2$-coefficients. (Here, the assumption that $p$ is even is essential, otherwise, there is no $\ZZ_2$-symmetry.) As a consequence, $\rank(SH_k^+(W)) = p$ for $k=4,5,\ldots, 2\ell$ but $\rank(SH_k^+(W)) = p-1$ for $k=2\ell+1, \ldots, 2\ell+4$ (and, by an analog of Lemma~\ref{lem_remaining_degrees}, equality hold in the latter identity, but this is not needed). By Lemma~\ref{lem_indep_filling}, we get

\end{itemize}

\begin{thm} \label{thm_pl}
 For $p$ even, the manifolds $\Sigma(\ell p,p,2,2), \ell\geq 1$ with their canonical contact structures are all diffeomorphic to $\#_{p-1} S^2\times S^3$ and have the same contact homology, yet they are pairwise non-contactomorphic.
\end{thm}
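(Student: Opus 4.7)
The plan is to mirror the argument of Theorem~\ref{thm_n3}, adapted to the exponents $(\ell p, p, 2, 2)$. I would first establish the diffeomorphism type by running Randell's algorithm to obtain $H_2(\Sigma(\ell p, p, 2, 2); \ZZ) \cong \ZZ^{p-1}$ and invoking Smale's classification of simply-connected spin $5$-manifolds, yielding $\Sigma(\ell p, p, 2, 2) \cong \#_{p-1} S^2 \times S^3$ independently of $\ell$. Next, after the unitary change of coordinates $w_0^{\ell p} + w_1^p + 2 w_2 w_3 = 0$ and Ustilovsky's perturbation of the contact form, the simple closed Reeb orbits are $\gamma^\pm$ together with the $p$ orbits $\gamma^{0, k}$ listed above; a direct count using the indicated Conley--Zehnder indices gives $CH_*(\Sigma(\ell p, p, 2, 2); \QQ) \cong \QQ^{p-1}$ in degree $2$ and $\QQ^p$ in every higher even degree, manifestly independent of $\ell$.

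The technical core is the computation of $SH^+$ with $\ZZ_2$-coefficients and the resulting separation of contact structures. I would tabulate the Morse--Bott generators of $SC^+$ in analogy with Table~\ref{table_generators}: two generators per simple orbit and each iterate. The full Morse--Bott formalism of Section~\ref{sec_MB}, applied to these exponents, provides the upper bounds $\rank SH^+_k(W) \leq p$ for $k \geq 4$, sharpened to $\leq p - 1$ in the overlap windows $k = 2N(\ell + 1) + j$, $j \in \{-1, 0, 1, 2\}$. To realize the generic bound $p$, the essential new input is the involution
\[
\psi(w_0, w_1, w_2, w_3) = (-w_0, -w_1, w_2, w_3),
\]
which preserves the Brieskorn polynomial precisely because both $\ell p$ and $p$ are even (and this is where the hypothesis $p$ even is used). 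Its fixed set in $\widehat W$ is $\{w_0 = w_1 = 0\} \cap \widehat W \cong \CC^*$, identical to the one considered in Section~\ref{sec_idea}. Consequently Lemma~\ref{lem_fixed}, Proposition~\ref{prop_reparametrization} and Proposition~\ref{prop_transversality} transfer verbatim: for a generic $J \in \mcJ_{\symm, \reg}$, any Floer cylinder between two covers of $\gamma^\pm$ is paired with the distinct cylinder $\psi \circ u$, so the $\ZZ_2$-count vanishes. This yields $\rank SH^+_k(W) = p$ in every generic degree.

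Combining the two sides, the positions of the rank drops $k = 2N(\ell + 1) + j$ pinpoint the value of $\ell$, and since every Reeb orbit satisfies $\mu_\CZ \geq 2 > 3 - n$ the filling-independence Lemma~\ref{lem_indep_filling} applies, so $SH^+$ is a contact invariant and distinguishes the $\Sigma(\ell p, p, 2, 2)$ for different $\ell$. I expect the main obstacle to be verifying that Proposition~\ref{prop_transversality} genuinely carries over with the comeagre set $\mcJ_{\symm, \reg}$ constructed for this $\psi$. The action of $\psi$ on the $\gamma^{0, k}$ is the identity setwise when $\ell$ is odd and the permutation $k \mapsto k + p/2$ when $\ell$ is even; Corollary~\ref{cor_d_van} itself only concerns moduli spaces between covers of $\gamma^\pm$ (all of which lie in $\widehat W_\fixed$), so this permutation is harmless, but the case distinction should be recorded before invoking the transversality argument.
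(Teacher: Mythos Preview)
Your proposal is correct and follows essentially the same route as the paper: Randell plus Smale for the diffeomorphism type, the Ustilovsky perturbation for the Reeb orbits and the $\ell$-independent contact homology, the full Morse--Bott count for the upper bound $p-1$ in the overlap degrees, and the involution $\psi$ (available precisely because $p$ is even) together with Lemma~\ref{lem_fixed}, Proposition~\ref{prop_reparametrization} and Proposition~\ref{prop_transversality} to force the rank $p$ in the generic degrees, with Lemma~\ref{lem_indep_filling} making the invariant filling-independent. Your additional remark on how $\psi$ acts on the families $S_{\gamma^{0,k}}$ (fixing them for $\ell$ odd, permuting by $k\mapsto k+p/2$ for $\ell$ even) is a correct and careful observation, though as you note it plays no role in the application of Corollary~\ref{cor_d_van}.
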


A natural question is whether the same is true for $p$ odd. The obvious thing to try is to apply the same strategy for the $\ZZ_p$ symmetry generated by
\[
 \psi(w_0,w_1,w_2,w_3) = (e^{2\pi i/p} w_0, e^{2\pi i/p} w_1,w_2,w_3)
\]
and $\ZZ_p$ coefficients. On the face of it, everything seems to work fine. However, more checks need to be done, in particular about the orientations of contributing Floer cylinders. This may be subject of future work.

\section{Exotic contact structures on $S^7, S^{11}, S^{15}$} \label{sec_exotic_S7}

The goal of this section is to prove Theorem~\ref{thm_intro_exotic_spheres}. The main focus will be on dimension $7$, where the Brieskorn manifolds $\Sigma(78k+1,13,6,3,3)$ will provide the exotic contact structures. We will first prove that these manifolds are diffeomorphic to $S^7$ and that their contact structure is homotopically trivial. Finally, we can distinguish them via the mean Euler characteristic. For higher dimensions, finding a similar example is difficult (if it exists at all, it has very large exponents). However, the theorem can be proven with the help of connected sums, see Section~\ref{sec_higher_dim}.

\subsection{Diffeomorphism types of topological spheres} \label{sec_diffeo_type}

Recall that a manifold $M$ is called \emph{boundary-parallelizable} if there exists a parallelizable manifold with boundary $W$ such that $\dd(W)=M$.
Let $M,M'$ be boundary-parallelizable homotopy spheres of dimension $4m-1$, $m>1$. Denote by $W, W'$ their parallelizable fillings and by $\sigma(W), \sigma(W')$ the signatures of their intersection products on $H_{2m}(W), H_{2m}(W')$.

By \cite{KerMil}, $M$ is orientation-preserving diffeomorphic to $M'$ if and only if 
\begin{equation}\label{eq_sig_diffeo}
 \sigma(W) \equiv \sigma(W') \mod \sigma_m,
\end{equation}
where $\sigma_m$ is a constant depending only on the dimension. Explicitly, 
\begin{equation}\label{eq_sigma_m}
 \sigma_m = 2^{2m+1}\cdot (2^{2m-1}-1) \cdot \mathrm{numerator}\left(\frac{4B_m}{m}\right),
\end{equation}
where $B_m$ is the $m$-th Bernoulli number, with the convention $B_1=1/6, B_2=1/30,B_3=1/42,B_4=1/30$ and so on.\footnote{Kervaire and Milnor prove \eqref{eq_sigma_m} for $m$ odd, while for $m$ even, it was left open whether there might be another factor of two in some cases. This uncertainty was removed later, see e.g.\ \cite[Theorem~5.2]{Lance} and the references therein.}
In particular, for $m=2$, this formula gives $\sigma_m = 224$.\footnote{This number is easier to understand by noting that the $\sigma(W)$ is divisible by $8$, and the number $\sigma(W)/8 \mod 28$ distinguishes the $28$ smooth structures on $S^7$.}

To apply this result, we need to know the signature of the filling of Brieskorn manifolds. For this, we use \cite[Theorem~3]{Brieskorn}:

\begin{thm}[Brieskorn] \label{thm_signature}
 Assume that $\Sigma = \Sigma(a_0, \ldots, a_n)$ is a homotopy sphere, with $n\geq 4$ even. Denote its filling by $W_a$. Then
 \[
  \sigma(W_a) = \sigma_a^+ - \sigma_a^-,
 \]
 where
 \begin{align}
  \sigma_a^+ &= \#\left\{j=(j_0, \ldots, j_n) \mid 0<j_k<a_k \ \forall k,\ 0<\sum_{k=0}^n \frac{j_k}{a_k} < 1 \mod 2 \right\} \label{eq_a+} \\
  \sigma_a^- &= \#\left\{j=(j_0, \ldots, j_n) \mid 0<j_k<a_k \ \forall k,\ 1<\sum_{k=0}^n \frac{j_k}{a_k} < 2 \mod 2 \right\} \label{eq_a-}.
 \end{align}
\end{thm}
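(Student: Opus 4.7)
The plan is to compute the signature of the intersection form on $H_n(W_a;\RR)$ by diagonalizing it over $\CC$ using the monodromy of the Milnor fibration, following the classical strategy of Pham and Brieskorn. I would organize the proof in four steps.

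First, I would recall the topology of the Milnor fiber. As noted at the end of Section~\ref{sec_topo}, $W_a$ is $(n-1)$-connected with $H_n(W_a;\ZZ)\cong \ZZ^\mu$, $\mu=\prod_k(a_k-1)$. Pham's construction produces an explicit basis of this lattice by taking joins of the ``vanishing segments'' lying over the $a_k$-th roots of $\epsilon$ in each coordinate line $\{z_k^{a_k}=\epsilon\}$; this basis is naturally indexed by the tuples $j=(j_0,\ldots,j_n)$ with $1\leq j_k\leq a_k-1$ appearing in the statement. Second, I would describe the monodromy. The geometric monodromy $h$ of the Milnor fibration $W_a\to S^1$ factors as a product of commuting cyclic symmetries $h_k\colon z_k\mapsto \zeta_k z_k$, where $\zeta_k=e^{2\pi i/a_k}$. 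A Fourier change of basis simultaneously diagonalizes all the $h_k$, yielding eigenvectors $e_j\in H_n(W_a;\CC)$ with eigenvalues
\[
 \lambda_j = \prod_k \zeta_k^{j_k} = \exp\!\bigl(2\pi i\, \alpha_j\bigr),\qquad \alpha_j \defeq \sum_{k=0}^n \frac{j_k}{a_k}.
\]
The assumption that $\Sigma(a)$ is a homotopy sphere is equivalent (via the Wang sequence) to $\det(I-h_*)\neq 0$, i.e.\ $\lambda_j\neq 1$ for all $j$, so the boundary values $\alpha_j\in\ZZ$ never occur and the formulas \eqref{eq_a+}, \eqref{eq_a-} involve only strict inequalities.

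Third, I would compute the intersection form on this eigenbasis. Because $n$ is even, the intersection form $S$ is symmetric and related to the Seifert form $L$ by $S=L+L^\top$; the Seifert form decomposes as a tensor product of $(n+1)$ one-variable Seifert forms (one for each monomial $z_k^{a_k}$), so $S$ inherits the same multiplicative structure on Pham's basis. The monodromy $h$ is an isometry of $S$, hence eigenspaces of $h$ for eigenvalues $\lambda,\lambda'$ with $\lambda\lambda'\neq 1$ are $S$-orthogonal; combining this with complex conjugation shows that $S(e_j,\overline{e_{j'}})=0$ unless $j'=j$, and a direct computation using the tensor factorization yields a nonzero real scalar $S(e_j,\overline{e_j})$ whose sign is dictated by the phase $\exp(\pi i\,\alpha_j)$. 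Careful bookkeeping (reducing to the one-variable case $z^a=\epsilon$, where the sign can be determined geometrically from the intersection of vanishing cycles) identifies this sign with $+1$ when $\alpha_j\bmod 2\in(0,1)$ and $-1$ when $\alpha_j\bmod 2\in(1,2)$. Finally, summing these diagonal contributions gives $\sigma(W_a)=\sigma_a^+-\sigma_a^-$.

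The main obstacle will be Step~3: namely, the sign bookkeeping for $\operatorname{sign}\bigl(S(e_j,\overline{e_j})\bigr)$. The tensor factorization of $L$ on Pham's basis reduces everything to the one-variable problem, but carrying out that reduction cleanly requires a consistent orientation convention for the Pham vanishing cycles and a careful treatment of the factor $(-1)^{n(n+1)/2}$-type signs arising from permuting coordinates in the join. Once the one-variable sign $\operatorname{sign}(\exp(\pi i j/a)-\exp(-\pi i j/a))=\operatorname{sign}(\sin(\pi j/a))$ is correctly identified and the tensor product multiplicativity is applied, the global sign rule depending on $\alpha_j\bmod 2$ falls out as the sign of a product of sines, which is precisely the indicator of the regions in \eqref{eq_a+} and \eqref{eq_a-}.
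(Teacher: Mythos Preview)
The paper does not prove this theorem: it is quoted, without proof, as \cite[Theorem~3]{Brieskorn}. Your outline is precisely the classical Pham--Brieskorn--Hirzebruch argument used in that reference (tensor factorization of the Seifert/intersection form on Pham's join basis, diagonalization by the characters of $\prod_k \ZZ/a_k$, and reading off the sign of each Hermitian eigenvalue), so there is nothing substantive to compare.

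One point of caution in your Step~3: the phrase ``sign of a product of sines'' is potentially misleading. Each factor $\sin(\pi j_k/a_k)$ with $0<j_k<a_k$ is strictly positive, so the product $\prod_k\sin(\pi j_k/a_k)$ carries no sign information whatsoever. What actually happens is that
\[
 \prod_{k=0}^n\bigl(1-\zeta_k^{j_k}\bigr) \;=\; (-2i)^{n+1}\,e^{\pi i\alpha_j}\prod_{k=0}^n\sin\!\left(\frac{\pi j_k}{a_k}\right),
\]
and after extracting the real eigenvalue of the Hermitian form (recall $n+1$ is odd, so $(-2i)^{n+1}$ is purely imaginary) the positive product of sines drops out and the sign of $S(e_j,\overline{e_j})$ is governed by $\operatorname{sign}\bigl(\sin(\pi\alpha_j)\bigr)$. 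It is this single sine, not the product, that distinguishes the regions in \eqref{eq_a+} and \eqref{eq_a-}. With that clarification your plan is sound.
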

By the condition $0<x<1 \mod 2$ for a real number $x$, we mean that $x$ lies in some interval $(2k,2k+1)$, $k\in\ZZ$, and similarly for $1<x<2 \mod 2$. The numbers $\sigma_a^+$ and $\sigma_a^-$ are precisely the dimensions of the subspaces of $H_n(W_a)$ on which the intersection form is positive and negative, respectively.

We want to apply Theorem~\ref{thm_signature} to $\Sigma(78k+1,13,6,3,3)$. Note that $\Sigma(78k+1,13,6,3,3)$ is a homotopy sphere by Theorem~\ref{thm_topol_sphere}.

\begin{prop} \label{prop_signature}
 The filling $W_k$ of $\Sigma(78k+1,13,6,3,3)$ has signature $\sigma(W_k) = 5824k$, with $\sigma_a^+ = 12272k$ and $\sigma_a^- = 6448k$. In particular, $\Sigma(78k+1,13,6,3,3)$ has the diffeomorphism type of the standard sphere.
\end{prop}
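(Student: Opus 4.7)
The plan is to apply Brieskorn's formula (Theorem~\ref{thm_signature}) to $a = (78k+1,13,6,3,3)$, and then conclude via the Kervaire--Milnor classification \eqref{eq_sig_diffeo} (which for $m=2$ gives $\sigma_m = 224$).

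The driving observation is that $78 = \operatorname{lcm}(13,6,3,3)$ is coprime to $a_0 = 78k+1$. For any fixed inner tuple $(j_1,j_2,j_3,j_4)$ with $1 \le j_1 \le 12$, $1 \le j_2 \le 5$, $j_3,j_4 \in \{1,2\}$, set
\[
 S \;=\; \frac{j_1}{13} + \frac{j_2}{6} + \frac{j_3}{3} + \frac{j_4}{3} \;=\; \frac{N}{78}, \qquad N \;=\; 6j_1 + 13j_2 + 26j_3 + 26j_4,
\]
and write $p' = N \bmod 78$, $q = 78 - p'$. Because $S + j_0/(78k+1)$ is then never an integer for $j_0 = 1,\dots,78k$, an elementary count using $\alpha := (1 - \{S\})(78k+1) = qk + q/78$ shows that $\lfloor S + j_0/(78k+1)\rfloor = \lfloor S\rfloor$ for exactly $qk$ values of $j_0$ and $= \lfloor S\rfloor + 1$ for the remaining $p'k$ values. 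Hence the inner tuple contributes $qk$ to $\sigma_a^+$ and $p'k$ to $\sigma_a^-$ when $\lfloor S\rfloor$ is even, and vice versa when it is odd. In particular both $\sigma_a^+$ and $\sigma_a^-$ are linear in $k$.

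What remains is a finite sum over the $240$ inner tuples. Define $g(N) := (-1)^{\lfloor N/78\rfloor}(78 - 2(N \bmod 78))$, a triangle wave of period $156$ satisfying $g(N+78) = -g(N)$, so that $\sigma(W_k) = k \sum_{(j_1,\dots,j_4)} g(N)$. Factoring $N = M + 26(j_3+j_4)$ with $M = 6j_1 + 13j_2$ collapses the $(j_3,j_4)$-sum to
\[
 g(M+52) + 2g(M+78) + g(M+104) \;=\; g(M+52) + g(M-52) - 2g(M),
\]
using $g(M+78) = -g(M)$ and $156$-periodicity. The remaining enumeration over the $60$ values of $(j_1,j_2)$, organized by which of the intervals $[0,78)$, $[78,156)$, $[156,234)$ each of $M \pm 52$ and $M$ lies in, produces the claimed totals $\sigma_a^+ = 12272k$, $\sigma_a^- = 6448k$, and hence $\sigma(W_k) = 5824k$.

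Finally, $5824 = 224 \cdot 26$, so $\sigma(W_k) \equiv 0 \pmod{224}$ for every $k \ge 1$, and \eqref{eq_sig_diffeo} then implies that $\Sigma(78k+1,13,6,3,3)$ is orientation-preservingly diffeomorphic to the standard $S^7$. The only genuine obstacle is the bookkeeping required to carry out the $60$-term enumeration cleanly; everything before that is a routine consequence of the coprimality and of Brieskorn's signature formula.
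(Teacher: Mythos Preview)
Your argument is correct and follows the same overall strategy as the paper: both exploit $\gcd(78,78k+1)=1$ to show that $\sigma_a^\pm$ are linear in $k$, reducing everything to a single finite computation. The paper phrases the linearity step as a bijection (each tuple $\tilde j$ in the $k=1$ case spawns exactly $k$ tuples for general $k$, all landing in the same integer interval), whereas you count directly for each inner tuple how many of the $78k$ values of $j_0$ push $S+j_0/a_0$ past the next integer; these are two presentations of the same mechanism.

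Where you genuinely diverge is in the finite step. The paper simply declares the $k=1$ case a computer check over all $78\cdot 240$ tuples. You instead package the per-tuple contribution as the triangle wave $g(N)=(-1)^{\lfloor N/78\rfloor}(78-2(N\bmod 78))$, use its antiperiodicity $g(N+78)=-g(N)$ to collapse the $(j_3,j_4)$-sum to the second-difference expression $g(M+52)+g(M-52)-2g(M)$, and are left with a $60$-term enumeration over $(j_1,j_2)$. This buys you a computation that is, in principle, hand-verifiable and makes the structure of the answer more transparent; the paper's version is shorter to state but opaque. Either way, the separate values of $\sigma_a^\pm$ follow from $\sigma_a^+ + \sigma_a^- = \mu(a) = 18720k$ together with the difference you compute, which you might make explicit.
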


\begin{proof}
 For a tuple $j=(j_0,\ldots, j_4)$ with $0<j_k<a_k$, denote 
 \[
  y_j \defeq \sum_{i=1}^4 \frac{j_i}{a_i} = \frac{j_1}{13} + \frac{j_2}{6} + \frac{j_3}{3} + \frac{j_4}{3},
 \]
 ignoring $j_0$ for the moment. We can write $y_j = \frac{p}{78}$ for some positive integer $p$ (relatively prime to $13$).
 
 The integer $j_0$ can take any value from $1$ to $78k$. For $0\leq n<78$, define
 \[
  I_n \defeq \{nk+1, nk+2, \ldots, (n+1)k\}.
 \]
 The important point of the proof is that for any $j_0\in I_n$, we get the inequality
 \[
  \frac{n}{78} < \frac{j_0}{a_0} = \frac{j_0}{78k+1} <\frac{n+1}{78}.
 \]
 Therefore, if we add $\frac{j_0}{a_0}$ to $y_j=\frac{p}{78}$, the result lies in the same integer interval for all $j_0\in I_n$. 
 It also lies in the same integer interval as $\frac{n+1}{79} +y_j$
 
 For $k=1$, the proposition is just a trivial computation (most easily done by a computer). However, with the above considerations, we can infer the general case $k>1$ from $k=1$. 
 Indeed, we can associate to any tuple $\tilde j=(\tilde j_0,\ldots, \tilde j_4)$ from the ($k=1$)-case (i.e.\ with $0<\tilde j_0<79$) a set of $k$ different tuples $j=(j_0, \ldots, j_4)$ such that
 \[
  \frac{\tilde j_0}{79} + \sum_{i=1}^4 \frac{\tilde j_i}{a_i} \qquad \text{ and } \qquad \frac{j_0}{78k+1} + \sum_{i=1}^4 \frac{j_i}{a_i}
 \]
 lie in the same integer interval. Explicitly, we set 
 \[
  j_i = \tilde j_i \qquad \text{and} \qquad j_0 = (\tilde j_0-1)\cdot k+1, (\tilde j_0-1)\cdot k+2, \ldots, \tilde j_0 \cdot k.
 \]
 This implies that any tuple $\tilde j$ contributing to \eqref{eq_a+} (resp.\ \eqref{eq_a-}) for $k=1$ gives $k$ contributions to \eqref{eq_a+} (resp.\ \eqref{eq_a-}) for $k>1$, and all tuples $j$ are reached from some $\tilde j$ in this way. Thus, $\sigma_a^+$ and $\sigma_a^-$ (and hence $\sigma(W_a)$) both get multiplied by $k$, giving the result. 
\end{proof}

\subsection{Almost contact structure} \label{sec_ac}

\begin{defn} 
 An \emph{almost contact structure} on a manifold $M$ of dimension $2n-1$ is a reduction of the structure group from $SO(2n-1)$ to $U(n-1)\times \id$. Equivalently, if $f: M \to BSO(2n-1)$ denotes the classifying map of the tangent bundle, an almost contact structure is a lift $\bar f: M \to B(U(n-1)\times \id)$, i.e.\ a map $\bar f$ such that the diagram
 \[
 \begin{tikzcd}[column sep=large]
  & B(U(n-1)\times \id) \arrow{d} \\
 M \arrow{ru}{\bar f} \arrow{r}{f} & BSO(2n-1)
 \end{tikzcd}
 \]
 commutes.
\end{defn}
A (cooriented) contact structure $\xi=\ker(\alpha)$ induces an almost contact structure by the splitting $TM = \xi \oplus \langle R_\alpha\rangle$. The almost contact structure of a contact structure is also called its formal homotopy class.

The map $B(U(n-1)\times \id)\rightarrow BSO(2n-1)$ is a fibration with fibers $SO(2n-1)/U(n-1)$. 
Moreover, the inclusion $SO(2n-1)\hookrightarrow SO(2n)$ induces a diffeomorphism of the homogeneous spaces (see e.g.\ \cite[Corollary~3.1.3]{Gray})
\[
 SO(2n-1)/U(n-1) \cong SO(2n)/U(n).
\]
Hence, if $M$ is stably parallelizable, the almost contact structures on $M$ are in one-to-one correspondence with homotopy classes of maps from $M$ to $SO(2n)/U(n)$. In particular, if $\Sigma$ is a (topological) sphere, almost complex structures on $\Sigma$ are classified by $\pi_{2n-1}(SO(2n)/U(n))$, with $0\in \pi_{2n-1}(SO(2n)/U(n))$ corresponding to the trivial almost contact structure. By a classical result from Massey \cite{Massey},
\[
 \pi_{2n-1}(SO(2n)/U(n)) \cong 
 \begin{cases}
  \ZZ \oplus \ZZ_2 & \text{ for $n\equiv 0 \mod 4$} \\
  \ZZ_{(n-1)!} & \text{ for $n\equiv 1 \mod 4$} \\
  \ZZ & \text{ for $n\equiv 2 \mod 4$} \\
  \ZZ_{\frac{(n-1)!}{2}} & \text{ for $n\equiv 3 \mod 4$} \\
 \end{cases}
\]
For Brieskorn manifolds diffeomorphic to standard spheres, Morita \cite{Morita} gives an explicit formula for the almost contact structure in terms of the exponents $a_j$. 
Denote by $\xi_a$ the canonical contact structure of $\Sigma(a)$ and by $ac$ the map sending its underlying almost contact structure to the groups above. Further, abbreviate
\[
 S_m \defeq \frac{2^{2m}(2^{2m-1}-1)B_m}{(2m)!},
\]
where $B_m$ denotes the $m$-th Bernoulli number, with the same convention as in \eqref{eq_sigma_m}. Then, Morita's result states that
\begin{equation} \label{eq_Morita}
 ac(\Sigma(a),\xi_a) = 
 \begin{cases}
  \left(\frac{1}{4S_m} \sigma(W_a)- \frac{1}{2} \mu(a),0\right) & \text{ for $n\equiv 0 \mod 4$} \\
  \frac{1}{2} \mu(a) & \text{ for $n\equiv 1 \mod 4$} \\
  -\frac{1}{4S_m} \sigma(W_a)- \frac{1}{2} \mu(a) & \text{ for $n\equiv 2 \mod 4$} \\
  \frac{1}{2} \mu(a) & \text{ for $n\equiv 3 \mod 4$}
 \end{cases}
\end{equation}
Here, $\mu(a)=\prod_{j=0}^n (a_j-1)$ is the rank of $H_n(W_a)$.
In dimension $7$, we have $m=2$, hence
\[
 ac(\Sigma(a),\xi_a) = \left(\frac{45}{28} \sigma(W_a) - \frac{1}{2} \mu(a), 0\right).
\]
The standard almost contact structure on $S^7$ is represented by $(0,0)$. Hence, we want
\[
 \frac{45}{28} \sigma(W_a) = \frac{1}{2} \mu(a),
\]
or, expressed in the dimensions of the positive and negative eigenspaces of the intersection form (with $\sigma(W_a)=\sigma_a^+ - \sigma_a^-$ and $\mu(a)=\sigma_a^+ + \sigma_a^-$),
\[
 31\sigma_a^+ = 59 \sigma_a^-.
\]
By Proposition~\ref{prop_signature}, this condition is satisfied for $\Sigma(78k+1,13,6,3,3)$. Hence:

\begin{thm} \label{thm_diffeo-ac}
 For any $k\in\NN$, the Brieskorn manifold $\Sigma(78k+1,13,6,3,3)$ is diffeomorphic to $S^7$. Moreover, its canonical contact structure is homotopically standard, i.e.\ its underlying almost contact structure is homotopic to that of $S^7$.
\end{thm}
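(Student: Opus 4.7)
The statement combines two claims: the manifold is diffeomorphic to the standard $S^7$, and its canonical contact structure has trivial formal homotopy class. Both should reduce to numerical verifications using the machinery set up earlier in the section, once the topological sphere status is established.

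My plan is to begin with topology. I would check that $\Sigma(78k+1,13,6,3,3)$ satisfies condition \eqref{item_two_isolated_pts} of Theorem~\ref{thm_topol_sphere}. Since $78 = 2\cdot 3\cdot 13$, the exponent $a_0 = 78k+1$ is coprime to $78$ and hence to each of $13,6,3,3$. Also $a_1 = 13$ is coprime to $6,3,3$. Thus two of the exponents are coprime to all others, so $\Sigma(78k+1,13,6,3,3)$ is a topological sphere. In particular it is a boundary-parallelizable homotopy $7$-sphere (the filling $W_k$ is parallelizable by the Milnor result recalled in Section~\ref{sec_topo}).

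For the diffeomorphism type, I would invoke the Kervaire--Milnor classification: in dimension $7$ (so $m=2$), one has $\sigma_2 = 224$, and the smooth structure is detected by $\sigma(W_k) \bmod 224$. By Proposition~\ref{prop_signature}, $\sigma(W_k) = 5824k = 26 \cdot 224 \cdot k$, so the signature is divisible by $224$, meaning $\Sigma(78k+1,13,6,3,3)$ corresponds to the zero element of $bP_8$, i.e.\ the standard $S^7$. This settles the first claim.

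For the almost contact structure, I would apply Morita's formula \eqref{eq_Morita} in the case $n\equiv 0 \bmod 4$. The standard almost contact structure corresponds to $(0,0) \in \ZZ \oplus \ZZ_2$, and the second component is automatically zero in Morita's formula; so the task reduces to checking the $\ZZ$-component, which is $\tfrac{45}{28}\sigma(W_a) - \tfrac{1}{2}\mu(a)$. Equivalently, writing $\mu(a) = \sigma_a^+ + \sigma_a^-$ and $\sigma(W_a) = \sigma_a^+ - \sigma_a^-$, the vanishing condition becomes $31\sigma_a^+ = 59\sigma_a^-$. Plugging in the values from Proposition~\ref{prop_signature} gives $31\cdot 12272k = 380432k = 59\cdot 6448k$, as desired. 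Alternatively, I could verify directly that $\tfrac{45}{28}\cdot 5824k = 9360k = \tfrac12 \cdot 18720k = \tfrac12 \mu(a)$, where $\mu(a) = 78k\cdot 12\cdot 5\cdot 2\cdot 2 = 18720k$.

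There is no real obstacle in the proof itself: everything is substitution into formulas already established. The genuine content lies in having chosen the exponents $(78k+1,13,6,3,3)$ so that the arithmetic closes up — in particular, that $5824$ is divisible by $224$ and that $31\sigma_a^+$ equals $59\sigma_a^-$ for all $k$. The linear dependence on $k$ in Proposition~\ref{prop_signature} is what makes the whole infinite family work at once. (The systematic search that led to this tuple is described later in Section~\ref{sec_how_to_find}.)
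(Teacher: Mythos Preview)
Your proposal is correct and follows essentially the same route as the paper: the diffeomorphism statement is already contained in Proposition~\ref{prop_signature} (via $5824k = 26\cdot 224\cdot k$), and the almost contact triviality is exactly the check $31\sigma_a^+ = 59\sigma_a^-$ using the values from that proposition. Your explicit verification of condition~\eqref{item_two_isolated_pts} in Theorem~\ref{thm_topol_sphere} and the alternative direct check $\tfrac{45}{28}\cdot 5824k = 9360k = \tfrac12\cdot 18720k$ are minor elaborations of what the paper leaves implicit, but the argument is the same.
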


At this point, one could already use \cite[Theorem~6.1]{Eliash2} to see that the Brieskorn manifolds $\Sigma(78k+1,13,6,3,3)$ give exotic but homotopically standard contact structures on $S^7$. However, it is not yet clear that they are pairwise non-contactomorphic, which we will show in Section~\ref{sec_mean_Euler}.

\subsection{Mean Euler characteristic} \label{sec_mean_Euler}

\subsubsection{General results} \label{sec_mean_Euler_general}

Let $(W, \omega=d\lambda)$ be a Liouville domain with boundary $M=\dd W$. Assume that $M$ is simply-connected and that the first Chern class $c_1(W)$ vanishes on $\pi_2(W)$. Under these conditions, we can associate to $W$ its $S^1$-equivariant symplectic homology $SH_*^{S^1}(W;\QQ)$ with rational coefficients, and in particular its positive part $SH_*^{S^1,+}(W;\QQ)$ (see \cite{BO_gysin}).  We use the latter to define the $i$-th Betti number of the positive $S^1$-equivariant symplectic homology as
\[
 b_i(W) \defeq \dim\left(SH_i^{S^1,+}(W;\QQ)\right).
\]
Now, assume that there exists a chain complex for positive $S^1$-equivariant symplectic homology for which the rank of the chain groups of each degree is uniformly bounded. This chain complex can either come from a contact form with non-degenerate Reeb orbits or from a suitable Morse--Bott setup.  
Then, we can define the \emph{mean Euler characteristic} as
\[
 \chi_m(W) \defeq \frac{1}{2} \left(\liminf_{N\to \infty} \frac{1}{N} \sum_{i=-N}^N (-1)^i b_i(W) + \limsup_{N\to \infty} \frac{1}{N} \sum_{i=-N}^N (-1)^i b_i(W) \right)
\]
In all the cases considered in this paper, the limit actually exists so the formula reduces to
\[
 \chi_m(W) = \lim_{N\to \infty} \frac{1}{N} \sum_{i=-N}^N (-1)^i b_i(W).
\]

By \cite[Corollary~2.2]{FSvK} and with the assumption made above, the mean Euler characteristic depends only on $M$ and its contact structure, i.e.\ it is independent of the filling $W$. Therefore, we will also write $\chi_m(M)$ instead of $\chi_m(W)$.

The next proposition gives an explicit formula for the mean Euler characteristic. Let $(\Sigma, \xi=\ker \alpha)$ be a contact manifold with a Morse--Bott contact form $\alpha$. Assume that the Reeb vector field induces an $S^1$-action with finitely many orbit spaces. Denote the periods, in increasing order, by $T_1<T_2<\cdots <T_k$ and the orbit spaces by $\Sigma_{T_i}$. So $T_k$ is the period of the principal orbit and all $T_i$ divide $T_k$. Define the \emph{frequency}
\[
 \phi_{T_i; T_{i+1}, \ldots, T_\ell } = \# \{a\in\NN \mid aT_i < T_\ell  \text{ and } aT_i\notin T_j\NN \text{ for any } j=i+1, \ldots, \ell\}.
\]
By convention, $\phi_{T_k; \emptyset} = 1$. 

\begin{prop}[\cite{FSvK}, \cite{KvK}] \label{prop_chi_m}
 Let $(\Sigma, \xi=\ker \alpha)$ be a contact manifold as above. Assume the following conditions:
 \begin{itemize}
  \item There exists a exact symplectic filling $(W,d\lambda)$ of $\Sigma$ such that $c_1(W) = 0$ and $\pi_1(W)=0$.
  \item $TW|_\Sigma$ is symplectically trivial.
  \item For any periodic Reeb orbit $\gamma$, the linearized Reeb flow is complex linear in some unitary trivialization of $\xi$ along $\gamma$.
  \item The Robbin--Salamon index of the principal orbit $\mu_P\defeq \mu(\Sigma_{T_k})$ does not vanish.
 \end{itemize}
Then the mean Euler characteristic is given by the formula
\begin{equation} \label{eq_chi_m}
 \chi_m(\Sigma) = \frac{\sum_{i=1}^\ell (-1)^{\mu(\Sigma_{T_i})-\frac{1}{2}\dim(\Sigma_{T_i}/S^1)} \phi_{T_i; T_{i+1}, \ldots, T_\ell } \cdot \chi^{S^1}(\Sigma_{T_i})}{|\mu_P|}.
\end{equation}
\end{prop}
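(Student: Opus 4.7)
The plan is to compute the positive $S^1$-equivariant symplectic homology in a Morse--Bott framework as in \cite{BO_mb}, \cite{BO_gysin}, then average the resulting Betti numbers over degrees. I would first set up the Morse--Bott chain complex for $SH_*^{S^1,+}(W;\QQ)$. The generators are organized by the orbit spaces on which periodic Reeb orbits sit: an iterate of period $aT_i$ lies on $\Sigma_{T_i}$ precisely when $aT_i \notin T_j\NN$ for any $j>i$, otherwise it descends to a smaller orbit space. After taking the $S^1$-quotient, the Morse--Bott contribution of all iterates sitting on $\Sigma_{T_i}$ within one full action window of length $T_k$ equals $\phi_{T_i;T_{i+1},\ldots,T_k} \cdot \chi^{S^1}(\Sigma_{T_i})$, up to the global sign dictated by the index formula \eqref{eq_index}.

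Next I would analyze the grading. The hypothesis that the linearized Reeb flow be complex-linear in some unitary trivialization of $\xi$ along each periodic orbit is exactly what guarantees that Robbin--Salamon indices add linearly under iteration: the $a$-th iterate of an orbit on $\Sigma_{T_i}$ has Robbin--Salamon index equal to $a$ times the Robbin--Salamon index of the simple orbit (rescaled by $T_i/T_k$), with no correction terms. In particular, advancing one full principal period shifts the grading of the chain complex by exactly $\mu_P$. Combined with the parity term $(-1)^{\mu(\Sigma_{T_i})-\frac{1}{2}\dim(\Sigma_{T_i}/S^1)}$ coming from \eqref{eq_index} and the $S^1$-reduction, the total signed contribution of one action window of length $T_k$ to the Euler characteristic of the chain complex equals the numerator of \eqref{eq_chi_m}.

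The assembly step is then straightforward. Because the pattern above repeats periodically in degree with period $|\mu_P|$ (and the rank per degree is uniformly bounded, as $\mu_P\neq 0$), the partial sums $\sum_{i=-N}^{N}(-1)^i b_i(W)$ grow asymptotically like $\tfrac{2N}{|\mu_P|}$ times the above numerator. Dividing by $N$ and taking the average of $\liminf$ and $\limsup$ (which agree here) absorbs the factor of $2$ present in the definition of $\chi_m$ and yields \eqref{eq_chi_m}. Invariance under filling is given by \cite[Corollary~2.2]{FSvK}, which is why we may write $\chi_m(\Sigma)$.

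The main obstacle will be bookkeeping: one must verify that the linear-in-$a$ behavior of the Robbin--Salamon index (coming from the complex-linear hypothesis) meshes exactly with the frequency counts $\phi_{T_i;T_{i+1},\ldots,T_k}$, and that the parity signs from \eqref{eq_index} combine with the reduction $\Sigma_{T_i}\to \Sigma_{T_i}/S^1$ to produce the stated sign $(-1)^{\mu(\Sigma_{T_i})-\frac{1}{2}\dim(\Sigma_{T_i}/S^1)}$. A secondary subtlety is to ensure that the averaging limit exists in our setting; for the Brieskorn examples in the subsequent sections this is automatic because all orbit spaces are compact smooth manifolds and $\mu_P\neq 0$, making the chain-level Euler characteristic a genuinely periodic function of the degree.
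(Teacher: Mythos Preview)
Your overall strategy---exploit the periodicity of the Morse--Bott chain complex under one full principal period and then average---matches the paper's sketch, which packages the same information in a Morse--Bott spectral sequence with $E^1_{p,q}=\bigoplus_T H_q^{S^1}(\Sigma_T;\mcL)$, where the $p$-grading is $\mu(\Sigma_T)-\tfrac12\dim(\Sigma_T/S^1)$.

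There is, however, a genuine gap in how you use the third hypothesis. You invoke complex-linearity of the linearized flow to conclude that Robbin--Salamon indices ``add linearly under iteration''. This is not what that hypothesis is for, and the claim itself is false: already for a one-dimensional unitary path $\Phi(t)=e^{2\pi i\alpha t}$ the RS index of the $a$-th iterate is governed by $\lfloor a\alpha T\rfloor+\lceil a\alpha T\rceil$, not by $a(\lfloor\alpha T\rfloor+\lceil\alpha T\rceil)$. The degree-periodicity you actually need---that the entire chain-level pattern shifts by exactly $\mu_P$ after one principal period---follows instead from the much simpler observation that the Reeb flow at time $T_k$ is the identity on all of $\Sigma$: hence $\mcN_{T+T_k}=\mcN_T$ for every $T$, and the index shifts by the Maslov index $\mu_P$ of the loop $\phi_t|_{[0,T_k]}$. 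No complex-linearity enters here.

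What the complex-linear hypothesis \emph{is} used for in the paper is to trivialize the local coefficient line bundle $\mcL$ in $H_*^{S^1}(\Sigma_T;\mcL)$. Without it, bad orbits can twist this bundle and the contribution of an orbit space is a twisted Euler characteristic rather than the ordinary $\chi^{S^1}(\Sigma_{T_i})$. Your outline never mentions orientations, bad orbits, or $\mcL$, so the appearance of $\chi^{S^1}(\Sigma_{T_i})$ with untwisted $\QQ$-coefficients is currently unjustified. Once you reassign the role of the hypothesis (complex-linearity $\Rightarrow$ $\mcL$ trivial) and derive the degree-periodicity from the identity-at-$T_k$ argument, the rest of your averaging computation goes through and is equivalent to the paper's spectral-sequence bookkeeping.
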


Let us briefly explain how this formula arises. The main idea is to use a Morse--Bott spectral sequence, converging to $SH^{S^1,+}(W;\QQ)$, whose first page is given by
 \begin{equation} \label{eq_spectral_seq}
  E_{p,q}^1 = \bigoplus_{\substack{T \text{ such that } \\ \mu(\Sigma_T) -\frac{1}{2}\dim(\Sigma_T/S^1)=p}} H^{S^1}_q(\Sigma_T; \mcL).
 \end{equation}
 Here, $\mcL$ is a real line bundle, meaning that homology with local coefficients is used. However, the third assumption in Proposition~\ref{prop_chi_m} guarantees that this bundle is trivial, so one has coefficients in $\QQ$.
 Then, adding all the contributions from \eqref{eq_spectral_seq} to $\chi_m$ over one period of the $S^1$-action with the correct signs result in the formula \eqref{eq_chi_m}.

For a Brieskorn manifold $\Sigma(a_0,\ldots, a_n)$, the principal period is $T_k=\lcm(a_i)\frac{\pi}{2}$. Hence, by \eqref{eq_Maslov_index}, the Robbin--Salamon index of a principal orbits is
\begin{equation} \label{eq_principal_Maslov}
 \mu_P = \sum_{j=0}^n \left(\floor*{\frac{\lcm(a_i)}{a_j}}+\ceil*{\frac{\lcm(a_i)}{a_j}}\right) - 2\lcm(a_i) = 2 \lcm(a_i) \cdot \left(\sum_{j=0}^n \frac{1}{a_j}-1 \right)
\end{equation}
So the assumption $\mu_P\neq 0$ is satisfied if $\sum_{j=0}^n \frac{1}{a_j} \neq 1$, while the other assumptions are satisfied for all Brieskorn manifolds.

\subsubsection{Application to $\Sigma(13,11,7,4,3)$} \label{sec_13_11_7_4_3}

Before turning to the main example in Section~\ref{sec_78k}, we briefly show that, if one is willing to use connected sums, there are even easier examples. 
They are based on the formula for the mean Euler characteristic for a connected sum \cite[Theorem~5.19]{KvK}:

\begin{prop} \label{prop_conn_sum}
 Let $\Sigma_1$, $\Sigma_2$ be contact manifolds of dimension $2n-1$ that come along with Liouville fillings for which the mean Euler characteristic is defined. Then
 \begin{equation*} 
 \chi_m(\Sigma_1 \# \Sigma_2) = \chi_m(\Sigma_1) + \chi_m(\Sigma_2) + (-1)^n \frac{1}{2}.
 \end{equation*}
\end{prop}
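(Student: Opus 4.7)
The plan is to realize $\Sigma_1\#\Sigma_2$ as the contact boundary of a specific Liouville domain and reduce the whole computation to topology plus one structural theorem. Choose exact symplectic fillings $(W_i,d\lambda_i)$ of $\Sigma_i$, $i=1,2$, for which $\chi_m$ is defined, and form the boundary connected sum $W\defeq W_1\natural W_2$. This is a Liouville filling of $\Sigma_1\#\Sigma_2$, homotopy equivalent to the wedge $W_1\vee W_2$, and it inherits the standing hypotheses $c_1|_{\pi_2}=0$ and simple connectedness needed to make $\chi_m$ filling-independent.

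The decisive structural input is Cieliebak's theorem on subcritical Weinstein handle attachment: since $W$ is obtained from $W_1\sqcup W_2$ by attaching a single $1$-handle (subcritical for $n\geq 2$), full $S^1$-equivariant symplectic homology is additive,
\[
SH_*^{S^1}(W;\QQ)\;\cong\;SH_*^{S^1}(W_1;\QQ)\oplus SH_*^{S^1}(W_2;\QQ).
\]
In contrast, the negative part $SH_*^{S^1,-}$ is supported on constant orbits of the $C^2$-small Hamiltonian and hence purely topological; via the Borel model for the trivial $S^1$-action it is identified with $\bigoplus_{k\geq 0}H_{n+*-2k}(W,\partial W;\QQ)$. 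Because $W\simeq W_1\vee W_2$, relative homology is additive in every degree except $2n$, where the two fundamental classes get identified ($\QQ$ instead of $\QQ^2$). Hence $SH_*^{S^1,-}$ of the direct sum contains exactly one extra copy of $\QQ$ compared to the connected sum, in each degree $*=n+2k$, $k\geq 0$.

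Feeding these two facts into the tautological long exact sequence
\[
\cdots\to SH_*^{S^1,-}(W)\to SH_*^{S^1}(W)\to SH_*^{S^1,+}(W)\to SH_{*-1}^{S^1,-}(W)\to\cdots,
\]
together with the invariance of the middle term, forces the same defect to reappear in $SH^{S^1,+}$ shifted one degree upward: the direct sum has exactly one extra copy of $\QQ$ compared to the connected sum in each degree $*=n+1+2k$, $k\geq 0$. Substituting into the definition of $\chi_m$, the contribution to $\chi_m(\Sigma_1\#\Sigma_2)-\chi_m(\Sigma_1)-\chi_m(\Sigma_2)$ is
\[
-\lim_{N\to\infty}\frac{1}{N}\sum_{\substack{k\geq 0\\n+1+2k\leq N}}(-1)^{n+1+2k}\;=\;-(-1)^{n+1}\cdot\tfrac{1}{2}\;=\;(-1)^n\cdot\tfrac{1}{2},
\]
which is the stated correction term. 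As a sanity check, taking $\Sigma_2=S^{2n-1}$ with its standard contact structure, for which Proposition~\ref{prop_chi_m} gives $\chi_m(S^{2n-1})=(-1)^{n+1}/2$, correctly reproduces $\chi_m(\Sigma_1\#S^{2n-1})=\chi_m(\Sigma_1)$.

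The main obstacle is a careful verification that both Cieliebak's additivity theorem and the topological identification of $SH^{S^1,-}$ carry over cleanly to the $S^1$-equivariant setting, and tracking the degree conventions so that the defect lands in degrees of parity $n+1$ (producing the sign $(-1)^n$ rather than $(-1)^{n+1}$). A more hands-on alternative would be to build a Morse--Bott contact form on $\Sigma_1\#\Sigma_2$ directly from the handle construction and exhibit the single extra $\QQ[u]$-tower contributed by the neck region; both routes should yield the same answer.
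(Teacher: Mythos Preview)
The paper does not prove this proposition; it is quoted from \cite[Theorem~5.19]{KvK}. Your outline is essentially the argument used there: fill $\Sigma_1\#\Sigma_2$ by the boundary connected sum $W=W_1\natural W_2$, invoke invariance of symplectic homology under the subcritical $1$-handle, and compare the purely topological negative parts. So the strategy is on target.

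One step is overstated. Knowing that $SH^{S^1}$ is additive and that $SH^{S^1,-}$ has a defect of one $\QQ$ in each degree $n+2k$ does \emph{not}, via the long exact sequence alone, force the defect in $SH^{S^1,+}$ to sit exactly in degrees $n+1+2k$; for that you would need a map of long exact sequences and control over it (a Viterbo transfer map from $W$ to $W_1\sqcup W_2$ does exist and does give such a map, but then you must check what it does on $SH^{S^1,-}$, which is more than you have written). The good news is that you do not need this for $\chi_m$. Work instead with alternating sums: for any Liouville domain $V$ the truncated long exact sequence gives
\[
\sum_{|i|\leq N}(-1)^i\dim SH_i^{S^1,+}(V)\;=\;\sum_{|i|\leq N}(-1)^i\dim SH_i^{S^1}(V)\;-\;\sum_{|i|\leq N}(-1)^i\dim SH_i^{S^1,-}(V)\;+\;O(1),
\]
and your identification of $SH^{S^1,-}$ yields $\sum_{|i|\leq N}(-1)^i\dim SH_i^{S^1,-}(V)=\tfrac{(-1)^n}{2}\,\chi(V,\partial V)\cdot N+O(1)$. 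Since $\chi(W,\partial W)=\chi(W_1,\partial W_1)+\chi(W_2,\partial W_2)-1$ and the full theory is additive, dividing by $N$ and letting $N\to\infty$ produces exactly the correction $(-1)^n/2$, with no parity bookkeeping left to do.

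Your worry about the equivariant version of Cieliebak's handle-attachment theorem is legitimate but not serious: in the Bourgeois--Oancea model, $SH^{S^1}$ is built from ordinary symplectic homology with Hamiltonians parametrised by finite-dimensional approximations to $ES^1$, and the $1$-handle remains subcritical throughout, so the non-equivariant argument goes through before passing to the limit.
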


Here, we will use $\Sigma = \Sigma(13,11,7,4,3)$. Note that it is a homotopy sphere by Theorem~\ref{thm_topol_sphere}. Further, application of Theorem~\ref{thm_signature} shows that the signature of its filling is $1344$. Hence, it is diffeomorphic to $S^7$ and its almost contact structure is zero.

As for the mean Euler characteristic, note that all exponents are pairwise relatively prime. This makes the computation somewhat easier, as \cite[Proposition~4.6]{FSvK} gives a simplified formula for such Brieskorn manifolds. Plugging in the numbers gives
\[
 \chi_m(\Sigma(13,11,7,4,3)) = -\frac{3047}{2546}.
\]
Of course, it can also be worked out directly from Proposition~\ref{prop_chi_m}, with a computation similar to the one we do in Section~\ref{sec_78k}.

As $\chi_m(S^7)=-1/2$ for the standard contact structure, this shows that the contact structure on $\Sigma(13,11,7,4,3)$ is exotic. In order to generate infinitely many exotic contact structures, take the connected sum of $k$ copies of $\Sigma(13,11,7,4,3)$ and use Proposition~\ref{prop_conn_sum} to get
\[
 \chi_m(\#_k \Sigma(13,11,7,4,3)) = -k\cdot \frac{3047}{2546} + (k-1)\cdot \frac{1}{2} = -\frac{1}{2}-k \cdot \frac{1774}{2546},
\]
which is strictly monotone decreasing in $k$. Hence, the manifolds $\#_k \Sigma(13,11,7,4,3)$ are pairwise non-contactomorphic, and we get infinitely many exotic contact structures in $S^7$.

\subsubsection{Application to $\Sigma(78k+1,13,6,3,3)$} \label{sec_78k}

The example $\Sigma(78k+1,13,6,3,3)$ is particularly nice because it does not need the connected sum construction. By Theorem~\ref{thm_diffeo-ac}, we already know that these manifolds are diffeomorphic to $S^7$ and have trivial almost contact structure. Now, we compute their mean Euler characteristic.

First, according to \eqref{eq_principal_Maslov}, the Robbin--Salamon index of the principal orbit is
\[
 \mu_P = 2\cdot \lcm(a_j) \cdot \left(\sum_{j=0}^4 \frac{1}{a_j}-1 \right) = 156 -14 a_0 = 142 - 1092k.
\] 
Computing all the terms appearing in \eqref{eq_chi_m}, we get Table~\ref{tab_chi_m}.

\begin{table}[h]
\begin{center}
\begin{tabular}{l|l|l|l}
 Orbit space & period $/\frac{\pi}{2}$ & $\chi^{S^1}$ & frequency \\
 \hline
 $\Sigma(a_0,13,6,3,3)$ & $78 a_0$ & $4$ & $1$ \\
 $\Sigma(13,6,3,3)$ & $78$ & $3$ & $a_0-1=78k$ \\
 $\Sigma(a_0,6,3,3)$ & $6a_0$ & $3$ & $12$ \\
 $\Sigma(6,3,3)$ & $6$ & $0$ & $12(a_0-1)=12\cdot 78k$ \\
 $\Sigma(a_0,13,3,3)$ & $39 a_0$ & $3$ & $1$ \\
 $\Sigma(13,3,3)$ & $39$ & $2$ & $a_0-1=78k$ \\
 $\Sigma(a_0,3,3)$ & $3a_0$ & $2$ & $12$ \\
 $\Sigma(3,3)$ & $3$ & $3$ & $12(a_0-1)=12\cdot 78k$ \\
 $\Sigma(a_0,13)$ & $13a_0$ & $1$ & $4$ \\
\end{tabular}
\caption{The contributions to $\chi_m(\Sigma(78k+1,13,6,3,3))$}
\label{tab_chi_m}
\end{center}
\end{table}

Hence, we can compute $\chi_m(\Sigma)$ in terms of $k$:

\begin{align*}
 \chi_m(\Sigma) &= -\frac{4+3\cdot 78k+36 + 3+2\cdot 78k+24+3\cdot 12\cdot 78k + 4}{|142-1092k|} \\
 &= \frac{71 + 3198k}{142-1092k}
\end{align*}

By a simple computation, the function
\[
 x\longmapsto  \frac{71+3198x}{142-1092x}
\]
is strictly monotone increasing. Hence, $\chi_m(\Sigma)$ can distinguish the different values of $k$.

\begin{thm} \label{thm_exotic-S7}
 The canonical contact structures on the Brieskorn manifolds $\Sigma(78k+1,13,6,3,3)$ are all different. Hence, in combination with Theorem~\ref{thm_diffeo-ac}, we get infinitely many exotic but homotopically trivial contact structures on $S^7$.
\end{thm}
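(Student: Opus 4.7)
The plan is to apply the mean Euler characteristic as a contact invariant, since all the necessary ingredients are already assembled. By Proposition~\ref{prop_chi_m}, $\chi_m$ is well-defined for these Brieskorn manifolds: they are simply connected, admit the canonical filling $W_a$ with $c_1=0$ and trivial symplectic normal bundle, and the Robbin--Salamon index of the principal orbit computed in \eqref{eq_principal_Maslov} is $\mu_P = 156 - 14 a_0 = 142 - 1092k$, which is nonzero for every $k \geq 1$. Hence $\chi_m(\Sigma(78k+1,13,6,3,3))$ depends only on the contact structure, not on the filling.

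Next, I would enumerate the orbit spaces of the Reeb flow from Section~\ref{sec_def_Brieskorn}: setting various subsets of coordinates $z_j$ to zero yields sub-Brieskorn manifolds as orbit spaces, and these are exactly the nine spaces collected in Table~\ref{tab_chi_m}. For each one I would read off the period from the flow formula, compute its $S^1$-equivariant Euler characteristic (small-dimensional Brieskorn manifolds whose topology is controlled by Randell's algorithm from Section~\ref{sec_topo}), and determine the frequency $\phi$ from the period ordering. The sign $(-1)^{\mu(\Sigma_T)-\frac{1}{2}\dim(\Sigma_T/S^1)}$ attached to each contribution is computed via the general Maslov formula \eqref{eq_Maslov_index}; the outcome after summation is the numerator displayed in the text.

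Plugging everything into formula \eqref{eq_chi_m} then yields
\[
 \chi_m\bigl(\Sigma(78k+1,13,6,3,3)\bigr) = \frac{71 + 3198k}{142 - 1092k}.
\]
The final step is to observe that the Möbius transformation $x \mapsto \tfrac{71 + 3198x}{142 - 1092x}$ has derivative of constant sign away from its single pole at $x = 142/1092 < 1$, hence is strictly monotone on $[1,\infty)$. In particular its values on $\NN$ are all distinct, so the contact structures are pairwise non-contactomorphic. Combined with Theorem~\ref{thm_diffeo-ac}, which identifies each $\Sigma(78k+1,13,6,3,3)$ with $S^7$ in its standard almost contact class, this yields infinitely many exotic but homotopically trivial contact structures on $S^7$.

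The only step requiring real care is the bookkeeping in Table~\ref{tab_chi_m}: one must correctly identify each orbit space, its period, its $S^1$-equivariant Euler characteristic, and the sign with which it enters \eqref{eq_chi_m}. Everything else is a routine rational-function check. Once the table is verified, the theorem follows immediately from the monotonicity of the resulting expression in $k$ and the filling-independence guaranteed by \cite[Corollary~2.2]{FSvK}.
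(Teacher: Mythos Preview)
Your proposal is correct and follows essentially the same approach as the paper: compute $\mu_P=142-1092k$, assemble the orbit-space contributions of Table~\ref{tab_chi_m} into formula~\eqref{eq_chi_m} to obtain $\chi_m=\frac{71+3198k}{142-1092k}$, and then use monotonicity of this rational function together with the filling-independence of $\chi_m$ to separate the contact structures. The paper carries out exactly these steps, so there is nothing substantively different to compare.
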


\subsection{How this example was found} \label{sec_how_to_find}

In the previous sections, the numbers $(78k+1,13,6,3,3)$ (and $(13,11,7,4,3)$ in Section~\ref{sec_13_11_7_4_3}) seemed to appear out of nowhere. In this section, we describe the strategy to find them.

Let $\Sigma=\Sigma(a_0,\ldots, a_4)$ be any Brieskorn manifold with its standard contact structure $\xi$. Denote, as before, its filling by $W_a$, the middle dimension of its homology by $\mu = \rank H_4(W_a) = \prod_{i=0}^4 (a_i-1)$ and its signature by $\sigma$. We are looking for examples that fulfill the following three conditions:
\begin{enumerate}[(i)]
 \item \label{topol_sphere} $\Sigma$ is a topological sphere, i.e.\ $H_{n-1}(\Sigma)=0$. This can be checked by Randell's algorithm.
 \item \label{smooth_sphere} $\Sigma$ has the standard smooth structure. By \eqref{eq_sig_diffeo}, assuming \eqref{topol_sphere} is satisfied, this is the case if and only if
 \begin{equation*}
  \sigma \equiv 0 \mod 224.
 \end{equation*}
 \item \label{homot_standard} $(\Sigma,\xi)$ has the standard almost contact structure. By \eqref{eq_Morita} and assuming \eqref{topol_sphere} and \eqref{smooth_sphere}, this is equivalent to the condition
 \[
  \frac{45}{28}\sigma - \frac{1}{2} \mu = 0.
 \]
\end{enumerate}

To reformulate these conditions, let $\sigma_a^+$ (resp.\ $\sigma_a^-$) denote, as before, the dimension of the positive (resp.\ negative) eigenspace of $H_n(W_a)$. Then $\sigma = \sigma_a^+-\sigma_a^-$ and $\mu=\sigma_a^+ + \sigma_a^-$, so condition \eqref{homot_standard} becomes
\[
 31 \sigma_a^+ = 59 \sigma_a^-.
\]
This gives $\sigma_a^+ = 59k$ and $\sigma_a^- = 31 k$ for some positive integer $k$. Assuming this, condition \eqref{smooth_sphere} is
\[
 \sigma = \sigma_a^+ - \sigma_a^- = 28 k \stackrel{!}{=} 224s
\]
for another positive integer $s$. Hence, $k=8s$. Putting everything together, conditions \eqref{smooth_sphere} and \eqref{homot_standard} are satisfied (under the assumption of \eqref{topol_sphere}) if and only if
\begin{align} \label{eq_a_in_terms_of_s}
 \sigma_a^+ &= 472 s \\
 \sigma_a^- &= 248 s.
\end{align}
In particular, $\mu = \sigma_a^+ + \sigma_a^- = 720s$.

With these preparations, it seems sensible to search for examples with the help of a computer. The algorithm does the following steps:
\begin{itemize}
 \item Iterate over the integer $s$ in some range, e.g.\ for $1\leq s\leq 60$.
 \item Iterate over all tuples $(b_0,\ldots, b_4)$, $b_j\geq 1$ such that $\prod_{i=0}^4 b_i = 720s$.
 \item Each such tuple gives a candidate $\Sigma(a)$ with $a_j = b_j+1$. Compute the signature of its filling with \eqref{eq_a+} and \eqref{eq_a-}.
 \item If \eqref{eq_a_in_terms_of_s} is fulfilled, use Randell's algorithm to check if $\Sigma$ is also a topological sphere. Otherwise, discard it.
\end{itemize}
With this algorithm, the following list of examples was found (values of $s$ without examples are skipped):

\begin{longtable}{l|l l l l}
 $s=4$ & $\Sigma(11,7,5,5,4)$ \\
 \hline
 $s=5$ & $\Sigma(11,11,7,4,3)$ \\
 \hline
 $s=6$ & $\Sigma(13, 11, 7, 4, 3)$ \\
 \hline
 $s=7$ & $\Sigma(11, 10, 9, 8, 2)$ \\
 \hline
 $s=8$ & $\Sigma(17, 16, 5, 4, 3)$ & $\Sigma(21, 13, 5, 4, 3)$ \\
 \hline
 $s=10$ & $\Sigma(26, 13, 5, 4, 3)$ & $\Sigma(41, 6, 5, 4, 4)$ \\
 \hline
 $s=12$ & $\Sigma(25, 11, 7, 7, 2)$ & $\Sigma(28, 11, 9, 3, 3)$ & $\Sigma(46, 7, 5, 5, 3)$ & $\Sigma(37, 11, 5, 4, 3)$ \\
 \hline
 $s=14$ & $\Sigma(22, 17, 7, 6, 2)$ & $\Sigma(25, 13, 8, 6, 2)$ & $\Sigma(29, 16, 7, 3, 3)$ & $\Sigma(31, 13, 8, 3, 3)$ \\
  & $\Sigma(31, 15, 7, 3, 3)$ & $\Sigma(43, 11, 5, 4, 3)$ & $\Sigma(37, 11, 8, 3, 3)$\\
 \hline
 $s=15$ & $\Sigma(25, 16, 7, 6, 2)$ \\
 \hline
 $s=16$ & $\Sigma(33, 13, 7, 6, 2)$ \\
 \hline
 $s=18$ & $\Sigma(37, 13, 7, 6, 2)$ \\
 \hline
 $s=20$ & $\Sigma(21, 17, 16, 4, 2)$ \\
 \hline
 $s=21$ & $\Sigma(43, 11, 10, 5, 2)$ & $\Sigma(43, 19, 6, 3, 3)$ \\
 \hline
 $s=22$ & $\Sigma(25, 23, 11, 4, 2)$ & $\Sigma(45, 13, 7, 6, 2)$ \\
 \hline
 $s=23$ & $\Sigma(31, 24, 7, 5, 2)$ \\
 \hline
 $s=24$ & $\Sigma(31, 25, 7, 5, 2)$ & $\Sigma(31, 17, 13, 4, 2)$ & $\Sigma(97, 7, 6, 4, 3)$ & $\Sigma(91, 9, 5, 4, 3)$ \\
 \hline
 $s=25$ & $\Sigma(31, 21, 11, 4, 2)$ \\
 \hline
 $s=26$ & $\Sigma(79, 13, 6, 3, 3)$ \\
 \hline
 $s=27$ & $\Sigma(37, 16, 13, 4, 2)$ & $\Sigma(37, 19, 11, 4, 2)$ & $\Sigma(46, 19, 7, 5, 2)$ \\
 \hline
 $s=28$ & $\Sigma(71, 9, 7, 7, 2)$ & $\Sigma(64, 11, 9, 5, 2)$ \\
 \hline
 $s=30$ & $\Sigma(41, 19, 11, 4, 2)$ \\
 \hline
 $s=33$ & $\Sigma(41, 23, 10, 4, 2)$ \\
 \hline
 $s=34$ & $\Sigma(35, 31, 9, 4, 2)$ & $\Sigma(52, 17, 11, 4, 2)$ & $\Sigma(103, 11, 7, 3, 3)$ & $\Sigma(86, 17, 4, 4, 3)$ \\
 \hline
 $s=36$ & $\Sigma(37, 31, 9, 4, 2)$ & $\Sigma(91, 17, 4, 4, 3)$ \\
 \hline
 $s=39$ & $\Sigma(79, 16, 7, 5, 2)$ \\
 \hline
 $s=40$ & $\Sigma(101, 17, 4, 4, 3)$ \\
 \hline
 $s=42$ & $\Sigma(113, 16, 4, 4, 3)$ \\
 \hline
 $s=43$ & $\Sigma(44, 37, 6, 5, 2)$ \\
 \hline
 $s=44$ & $\Sigma(49, 34, 6, 5, 2)$ & $\Sigma(89, 16, 7, 5, 2)$ \\
 \hline
 $s=45$ & $\Sigma(136, 11, 7, 3, 3)$ \\
 \hline
 $s=46$ & $\Sigma(93, 16, 7, 5, 2)$ \\
 \hline
 $s=48$ & $\Sigma(97, 16, 7, 5, 2)$ \\
 \hline
 $s=49$ & $\Sigma(148, 11, 7, 3, 3)$ \\
 \hline
 $s=50$ & $\Sigma(121, 13, 6, 6, 2)$ \\
 \hline
 $s=52$ & $\Sigma(157, 13, 6, 3, 3)$ & $\Sigma(131, 10, 9, 5, 2)$ \\
 \hline
 $s=54$ & $\Sigma(73, 28, 6, 5, 2)$ \\
 \hline
 $s=57$ & $\Sigma(91, 20, 9, 4, 2)$ \\
 \hline
 $s=60$ & $\Sigma(91, 31, 5, 3, 3)$ \\
 \hline
\end{longtable}

The example $\Sigma(13,11,7,4,3)$ appears near the top. It was chosen simply as the first example whose exponents are relatively prime.

Unfortunately, this list does not display a simple regular pattern. Therefore, instead of continuing this brute-force method, one can try to find numbers $a_1,a_2,a_3,a_4$ such that, when $a_0\to \infty$, the ratio $\sigma_a^+/\sigma_a^-$ approaches the value $59/31$. In this computation, one can assume that the contribution of $j_0/a_0$ to \eqref{eq_a+} and \eqref{eq_a-} is spread out evenly over an interval of length one.

Thus, with another brute-force search, the numbers $13,6,3,3$ were found quickly. Then, one can check that the values $79,157,235,313$, etc.\ actually work for $a_0$. With this information, trying the tuples $a=(78k+1,13,6,3,3)$ seems like the obvious choice. The remaining work was to verify conditions \eqref{topol_sphere}, \eqref{smooth_sphere} and \eqref{homot_standard}, as was done in Sections \ref{sec_diffeo_type} and \ref{sec_ac}.

The next example that can be found in this way is $\Sigma(504k+1, 36, 7, 4, 2)$. With the same methods, it can be shown that this example also produces an infinite family of exotic contact structures on $S^7$.

\subsection{Further results} \label{sec_further_results}

Having established the existence of infinitely many contact structures in the standard formal homotopy class on $S^7$, one can ask a similar question for other contact manifolds.
In some cases, the answer is just a corollary of Theorem~\ref{thm_exotic-S7}:

\begin{thm} \label{thm_general_mfd_exotic}
 Let $(M,\xi=\ker(\alpha))$ be a contact $7$-manifold that admits a Liouville filling for which the mean Euler characteristic is well-defined. Then, there exist infinitely many contact structures on $M$ in the formal homotopy class of $\xi$.
\end{thm}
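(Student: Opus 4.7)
The plan is to produce the infinite family by taking contact connected sums of $(M,\xi)$ with the Brieskorn spheres $\Sigma_k\defeq \Sigma(78k+1,13,6,3,3)$ from Section~\ref{sec_78k} and distinguishing the results via the mean Euler characteristic.

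First, recall from Theorem~\ref{thm_diffeo-ac} that each $\Sigma_k$ is diffeomorphic to $S^7$ and carries the standard formal homotopy class of almost contact structure. Consequently, the smooth manifold underlying $(M,\xi)\#(\Sigma_k,\xi_k)$ is just $M\#S^7\cong M$, and since connected sum with a contact sphere carrying the standard almost contact structure preserves the formal homotopy class (this uses only that the almost contact structure on the surgery sphere extends over the $1$-handle), the resulting contact structure $\xi_k'$ on $M$ is formally homotopic to $\xi$. Thus the family $\{(M,\xi_k')\}_{k\in\NN}$ all lies in the given formal homotopy class.

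Next, we must separate these contact structures. By hypothesis $(M,\xi)$ has a Liouville filling whose mean Euler characteristic is defined; Sections~\ref{sec_diffeo_type}--\ref{sec_mean_Euler} provide such a filling (namely $W_a$) for each $\Sigma_k$. Since the connected sum of Liouville domains is again a Liouville domain and the hypotheses of Proposition~\ref{prop_conn_sum} transfer through this operation, the mean Euler characteristic is defined for $(M,\xi_k')$ and satisfies
\[
 \chi_m\bigl(M,\xi_k'\bigr) = \chi_m(M,\xi) + \chi_m(\Sigma_k) + \tfrac{1}{2},
\]
using $n=4$ in Proposition~\ref{prop_conn_sum} so that $(-1)^n=1$. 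The computation in Section~\ref{sec_78k} showed
\[
 \chi_m(\Sigma_k) = \frac{71+3198k}{142-1092k},
\]
and this expression is a strictly monotone function of $k$. Hence the values $\chi_m(M,\xi_k')$ are pairwise distinct, so by the contact invariance of $\chi_m$ (via \cite[Corollary~2.2]{FSvK}), the contact structures $\xi_k'$ on $M$ are pairwise non-contactomorphic.

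The main technical point to verify is the compatibility of the contact connected sum with the hypothesis that the mean Euler characteristic be well-defined; that is, one needs that a Liouville filling of $(M,\xi_k')$ exists for which the Morse--Bott (or non-degenerate) chain complex giving $SH^{S^1,+}$ has uniformly bounded-rank chain groups. This follows from taking the boundary-connected sum of the given filling of $(M,\xi)$ with the filling $W_a$ of $\Sigma_k$ from \eqref{eq_W}, where both ingredients already admit such chain-level models; the connected-sum filling inherits the index-positivity and bounded-rank conditions, and the additivity formula of Proposition~\ref{prop_conn_sum} then applies.
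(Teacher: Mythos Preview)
Your proof is correct and follows essentially the same approach as the paper: form the contact connected sums $M\#\Sigma(78k+1,13,6,3,3)$, observe that the formal homotopy class is unchanged because the Brieskorn spheres carry the trivial almost contact structure, and distinguish the results via Proposition~\ref{prop_conn_sum} together with the computation of $\chi_m(\Sigma_k)$ from Section~\ref{sec_78k}. Your version is in fact more detailed than the paper's, in particular in spelling out why the mean Euler characteristic remains well-defined for the boundary-connected-sum filling.
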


\begin{proof}
 Take the connected sum of $M$ with the manifolds from Theorem~\ref{thm_exotic-S7}. These manifolds have trivial almost contact structure, corresponding to the zero element in $\pi_7(SO(8)/U(4))$. Hence, the lift of the classifying map $M\to BSO(7)$ to $U(3)\times \id$ does not change under the connected sum, so the formal homotopy class stays the same. However, the contact structures can be distinguished by the mean Euler characteristic, using Proposition~\ref{prop_conn_sum}.
\end{proof}

A similar theorem holds in dimensions $4m+1$, where the Ustilovsky spheres take the place of the manifolds from Theorem~\ref{thm_exotic-S7}. See e.g.\ \cite{Espina} for the mean Euler characteristic of the Ustilovsky spheres.

\begin{rmk}
There is also a version of the mean Euler characteristic using contact homology. For this purpose, the examples of Section~\ref{sec_13_11_7_4_3} can be useful: All Reeb orbits in $\Sigma(13,11,7,4,3)$ have Conley--Zehnder index $\leq -3$, so cylindrical contact homology is (conjecturally) well-defined. Hence, one can use these manifolds to prove a variant of Theorem~\ref{thm_general_mfd_exotic} in which the assumption of a Liouville-filling is replaced by the assumption that cylindrical contact homology (and its mean Euler characteristic) is well-defined. Besides Brieskorn manifolds, e.g.\ the prequantization bundles from \cite[Example 8.2]{Espina} satisfy this assumption.
\end{rmk}

\subsubsection{Different formal homotopy classes and exotic $7$-spheres}

One may also ask whether there are infinitely many contact structures in other formal homotopy classes on $S^7$. The next proposition gives a partial answer to this question.

\begin{prop} 
 In any almost contact structure of the form $(2k,0)\in\ZZ\oplus\ZZ_2$ on $S^7$, there are infinitely many contact structures.
\end{prop}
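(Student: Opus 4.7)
The plan is to mirror the argument of Theorem~\ref{thm_general_mfd_exotic}. For each integer $k$, I would first exhibit a single contact manifold $(\Sigma_k, \xi_k)$ diffeomorphic to $S^7$, realising the almost contact class $(2k, 0)$, and admitting a Liouville filling for which the mean Euler characteristic is well-defined. I would then form the family
\[
 (\Sigma_k, \xi_k) \# \bigl(\Sigma(78j+1, 13, 6, 3, 3), \xi_{\mathrm{can}}\bigr), \qquad j \in \NN.
\]
Since connected sums add almost contact classes and $\Sigma(78j+1, 13, 6, 3, 3)$ lies in class $(0,0)$ by Theorem~\ref{thm_diffeo-ac}, each member of the family is a smooth $S^7$ in class $(2k, 0)$, and its formal homotopy class equals that of $\xi_k$.

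For the existence step I would look among Brieskorn spheres. By Morita's formula \eqref{eq_Morita}, the almost contact class of a Brieskorn $7$-sphere $\Sigma(a)$ automatically has vanishing second component, while the first component equals $\tfrac{45}{28}\sigma(W_a) - \tfrac{1}{2}\mu(a)$. Realising the class $(2k, 0)$ therefore amounts to solving the Diophantine system
\[
 31\sigma_a^+ - 59\sigma_a^- = 56\, k \qquad \text{and} \qquad \sigma_a^+ - \sigma_a^- \equiv 0 \pmod{224},
\]
subject to Brieskorn's sphericity criterion (Theorem~\ref{thm_topol_sphere}), where $\sigma_a^\pm$ are as in Theorem~\ref{thm_signature}. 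Running the brute-force search of Section~\ref{sec_how_to_find} with this modified condition should produce examples for each fixed $k$; ideally one would also exhibit an explicit parametric family analogous to $\Sigma(78k+1, 13, 6, 3, 3)$ (which handled the case $k=0$), but a single example per class is enough. This Diophantine existence statement is the main obstacle of the proof.

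Given $(\Sigma_k, \xi_k)$, boundary-connect summing its Liouville filling with the Brieskorn filling $W$ of $\Sigma(78j+1, 13, 6, 3, 3)$ yields a Liouville filling of the connected sum, so its mean Euler characteristic is defined. Proposition~\ref{prop_conn_sum} (with $n=4$, giving $(-1)^n/2 = 1/2$) together with Section~\ref{sec_78k} then yields
\[
 \chi_m\bigl((\Sigma_k, \xi_k) \# \Sigma(78j+1, 13, 6, 3, 3)\bigr) = \chi_m(\xi_k) + \frac{71 + 3198\, j}{142 - 1092\, j} + \frac{1}{2}.
\]
Since the middle term is strictly monotone in $j$, the values are pairwise distinct and the contact structures in the family are pairwise non-contactomorphic, proving the proposition.
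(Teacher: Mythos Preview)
Your reduction to Theorem~\ref{thm_general_mfd_exotic} is exactly the paper's strategy, and the connected-sum computation at the end is fine. The gap is in the existence step: you do not actually produce, for a given $k$, a Brieskorn $7$-sphere in the class $(2k,0)$. Saying that a brute-force search ``should produce examples for each fixed $k$'' is not a proof; for large $|k|$ you have given no argument that the Diophantine system $31\sigma_a^+ - 59\sigma_a^- = 56k$, $\sigma_a^+ - \sigma_a^- \equiv 0 \pmod{224}$ has a solution subject to the sphericity constraint, and there is no reason a finite search will certify this for infinitely many $k$ at once.

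The paper sidesteps this entirely by exploiting the additivity of the almost contact class under connected sum. It is enough to realise the classes $(+2,0)$ and $(-2,0)$ by contact $7$-spheres with Liouville fillings; connected sums of copies of these two then hit every class $(2k,0)$. Concretely, the paper exhibits three explicit Brieskorn $7$-spheres $M_1=\Sigma(11,9,9,5,3)$, $M_2=\Sigma(13,10,9,3,3)$, $M_3=\Sigma(167,3,2,2,2)$ with almost contact classes $-40$, $72$, $194$, and forms the combinations $M_4=2M_1\#M_2$ (class $-8$), $M_5=24M_4\#M_3$ (class $+2$), and $M_6=M_4\#3M_5$ (class $-2$). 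This reduces the infinite family of Diophantine problems to three finite computations, after which Theorem~\ref{thm_general_mfd_exotic} applies exactly as you outlined.
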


\begin{proof}
 We use certain Brieskorn manifolds to construct a manifold diffeomorphic to $S^7$ with almost contact structure $(\pm 2,0)$. Taking connected sums and applying Theorem~\ref{thm_general_mfd_exotic} then finishes the proof.
 
 We choose $M_1=\Sigma(11,9,9,5,3), M_2=\Sigma(13,10,9,3,3)$ and $M_3=\Sigma(167,3,2,2,2)$. It is straightforward to verify that these manifolds are diffeomorphic to $S^7$ and that their almost contact structures are $-40$, $72$ and $194$, respectively. Hence,
 \[
  M_4 \defeq 2 M_1 \# M_2 \cong S^7
 \]
 has almost contact structure $-8$. Further,
 \[
  M_5 \defeq 24 M_4 \# M_3 \cong S^7
 \]
 has almost contact structure $+2$, and
 \[
  M_6 \defeq M_4 \# 3 M_5 \cong S^7
 \]
 has almost contact structure $-2$.
\end{proof}

By contrast, the following lemma implies that the remaining almost contact structures on $S^7$ cannot be realized as connected sums of Brieskorn manifolds diffeomorphic to $S^7$.

\begin{lem}
 Any Brieskorn manifold diffeomorphic to $S^{4m-1}$, $m\geq 2$, has almost contact structure of the form $(2k,0)\in \ZZ\oplus \ZZ_2$ (resp.\ of the form $2k\in\ZZ$ if $m$ is odd).
\end{lem}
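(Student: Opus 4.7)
My plan is to deduce the lemma from Morita's formula \eqref{eq_Morita} by reducing it to two divisibility statements. Since $\Sigma(a)$ has dimension $4m-1$, we have $n=2m$, and \eqref{eq_Morita} assigns to $\Sigma(a)$ the integer
\[
 x_a \;\defeq\; \pm \frac{\sigma(W_a)}{4 S_m} - \frac{\mu(a)}{2},
\]
with sign $+$ when $m$ is even and $-$ when $m$ is odd, and with vanishing $\ZZ_2$-coordinate in the former case. In both cases it suffices to prove $x_a\in 2\ZZ$.

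First I would use the Kervaire--Milnor diffeomorphism criterion \eqref{eq_sig_diffeo}: since $\Sigma(a)$ is standardly smooth, $\sigma(W_a)=k\sigma_m$ for some $k\in \ZZ$, so $\tfrac{\sigma(W_a)}{4S_m}=k\cdot\tfrac{\sigma_m}{4S_m}$. Writing $B_m=p/q$ in lowest terms and setting $d=\gcd(4p,mq)$, a direct manipulation of the definitions of $\sigma_m$ in \eqref{eq_sigma_m} and $S_m=\tfrac{2^{2m}(2^{2m-1}-1)B_m}{(2m)!}$ yields
\[
 \frac{\sigma_m}{4S_m}\;=\;\frac{\mathrm{num}(4B_m/m)\,(2m)!}{2B_m}\;=\;\frac{2\,(2m)!\,q}{d}.
\]
Because $d\mid mq$ and $m\mid(2m)!$ for $m\geq 1$, we have $d\mid(2m)!\,q$; hence $\tfrac{\sigma_m}{4S_m}$ is an integer divisible by $2$, and $\tfrac{\sigma(W_a)}{4S_m}\in 2\ZZ$.

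Second, I would use Brieskorn's Theorem~\ref{thm_topol_sphere} to show $\mu(a)\in 4\ZZ$, which gives $\tfrac{\mu(a)}{2}\in 2\ZZ$. In case~(i), the two ``isolated'' exponents $a_i, a_j$ are coprime to each other (so at most one is even) and to every other exponent (so if one of them is even, all remaining exponents are forced to be odd); a case distinction yields at least two odd exponents. In case~(ii), the $r$ members of the pairwise-gcd-$2$ set are all even, while the isolated exponent and each leftover must be coprime to every set element and are therefore odd; since $r$ is odd with $r\leq 2m-1$, the odd count equals $(2m+1)-r\geq 2$. In either case, at least two factors in $\mu(a)=\prod_{i=0}^{2m}(a_i-1)$ are even, so $\mu(a)\in 4\ZZ$.

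Combining the two divisibilities gives $x_a\in 2\ZZ$, proving the lemma. The most delicate step is the Bernoulli-number identity $\tfrac{\sigma_m}{4S_m}=\tfrac{2(2m)!q}{d}$; although $B_m$ has irregular denominators (e.g.\ $q=66$ when $m=5$), the elementary divisibility $d\mid mq\mid(2m)!\,q$ is enough to absorb them, so no deeper number theory is required.
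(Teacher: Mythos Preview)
Your proof is correct and follows essentially the same route as the paper: reduce to showing $\sigma(W_a)/(4S_m)\in 2\ZZ$ via the Kervaire--Milnor divisibility of $\sigma(W_a)$ by $\sigma_m$, and $\mu(a)\in 4\ZZ$ via the two cases of Brieskorn's sphere criterion. Your Bernoulli identity $\sigma_m/(4S_m)=2(2m)!\,q/d$ is in fact the correct normalization (the paper's displayed formula drops a factor of $2$ in the denominator, though its conclusion is unaffected), and your divisibility argument $d\mid mq\mid (2m)!\,q$ cleanly justifies the integrality and evenness.
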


\begin{proof}
 By Morita's formula \eqref{eq_Morita}, we have
 \[
 ac(\Sigma(a),\xi_a) =
 \begin{cases}
  \left(\frac{1}{4S_m} \sigma(W_a)- \frac{1}{2} \mu(a),0\right) \in \ZZ\times \ZZ_2 & \text{ if $m$ is even} \\
  -\frac{1}{4S_m} \sigma(W_a)- \frac{1}{2} \mu(a) \in\ZZ & \text{ if $m$ is odd.} \\
 \end{cases}
 \]
 We see immediately that, if $m$ is even, the second factor of the almost contact structure always vanishes. It remains to show that the first factor is an even integer.
 
 By the assumption that $\Sigma(a)$ is diffeomorphic to $S^{4m-1}$, we know from \eqref{eq_sig_diffeo} and \eqref{eq_sigma_m} that $\sigma(W_a)$ is a multiple of $\sigma_m$. We have
 \[
  \frac{\sigma_m}{4S_m} = \frac{\mathrm{numerator}\left(\frac{4B_m}{m}\right)\cdot (2m)!}{B_m} \in 2\ZZ,
 \]
 so $\frac{\sigma(W_a)}{4S_m}$ is certainly an even integer.
 
 As for $\mu(a) = \prod_{j=0}^n (a_j-1)$, we use Theorem~\ref{thm_topol_sphere} to infer its divisibility by $4$. First of all, there exists an exponent, say $a_0$, which is relatively prime to all other exponents. We assume that $a_0$ is odd, since otherwise, all other exponents are odd and $\mu(a)$ is divisible by $2^n$. So we already get a factor of two in $\mu(a)$.
 
 If item \eqref{item_two_isolated_pts} of Theorem~\ref{thm_topol_sphere} applies, we get another factor of two for the same reason, so we are done. So assume that item \eqref{item_condition_B} holds with the set $\{a_1,\ldots, a_r\}$. In particular, $a_1, \ldots, a_r$ are even, while $a_0, a_{r+1}, \ldots, a_n$ are odd. Since $r$ is odd and $n$ is even, we have at least two odd exponents. Hence $\mu(a)$ is divisible by four.
\end{proof}

One might ask whether a result analogous to Theorem~\ref{thm_exotic-S7} holds for exotic $7$-spheres. One problem here is that Morita's calculation of the almost contact structure in \cite{Morita} is only valid for standard smooth spheres. Besides, it is not even clear which almost contact structure should be viewed as standard. Therefore, the best we can do is the following:

\begin{cor}
 On any boundary parallelizable homotopy $7$-sphere $M \in bP_8$, there exists an almost contact structure containing infinitely many contact structures.
\end{cor}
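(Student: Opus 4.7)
My plan is to reduce the statement to Theorem~\ref{thm_general_mfd_exotic}, which already guarantees infinitely many contact structures in the formal homotopy class of any contact structure whose Liouville filling has a well-defined mean Euler characteristic. The task therefore becomes: exhibit on every $M \in bP_8$ at least one such contact structure.

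The key input is a classical theorem of Brieskorn: every element of $bP_8\cong \ZZ_{28}$ is realized by a Brieskorn $7$-sphere. Concretely, the family
\[
 \Sigma_k \defeq \Sigma(6k-1,3,2,2,2), \qquad k=1,\ldots,28,
\]
exhausts $bP_8$. One can verify this by computing $\sigma(W_{\Sigma_k})$ via Theorem~\ref{thm_signature} and observing that, as $k$ varies, the classes $\sigma(W_{\Sigma_k})/8 \mod 28$ exhaust $\ZZ_{28}$ by \eqref{eq_sig_diffeo} and \eqref{eq_sigma_m}. Thus $M$ is orientation-preserving diffeomorphic to some $\Sigma_k$.

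Now equip $M\cong \Sigma_k$ with its canonical Brieskorn contact structure $\xi_k$ and its standard filling $W_k$ from~\eqref{eq_W}. Since $W_k$ is parallelizable we have $c_1(W_k)=0$, and as a Brieskorn filling of dimension $\geq 6$ it is simply-connected. By~\eqref{eq_principal_Maslov} the principal Robbin--Salamon index is
\[
 \mu_P = 2\,\lcm(6k-1,3,2)\left(\tfrac{1}{6k-1}+\tfrac{1}{3}+\tfrac{3}{2}-1\right) \neq 0,
\]
so all hypotheses of Proposition~\ref{prop_chi_m} are satisfied and the mean Euler characteristic $\chi_m(M,\xi_k)$ is well-defined. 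Theorem~\ref{thm_general_mfd_exotic} then yields infinitely many pairwise non-contactomorphic contact structures on $M$ lying in the formal homotopy class of $\xi_k$, which is the desired conclusion.

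The only non-trivial step is the first one, namely the classical realization of all of $bP_8$ by Brieskorn spheres of a single five-parameter shape. If one wishes to avoid quoting Brieskorn's result directly, one can instead first observe that $\Sigma(3,3,2,2,2)$ generates $bP_8$ (its filling has signature $8$, which is a generator of $8\ZZ/224\ZZ$), and then realize arbitrary multiples as Brieskorn manifolds using the additivity of signatures under connected sums together with the fact that Brieskorn fillings with the above exponent pattern have signatures running through all multiples of $8$ modulo $224$. Everything else is a direct application of already-established machinery from the paper.
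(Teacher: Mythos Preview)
Your main argument is correct and essentially identical to the paper's: represent every element of $bP_8$ by some $\Sigma(6k-1,3,2,2,2)$ (citing Brieskorn) and apply Theorem~\ref{thm_general_mfd_exotic}; you simply spell out the verification that the mean Euler characteristic is defined, which the paper leaves implicit.

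One small correction to your optional alternative paragraph: $\Sigma(3,3,2,2,2)$ is \emph{not} a topological sphere --- neither condition of Theorem~\ref{thm_topol_sphere} is met, since no exponent is coprime to all the others --- so it cannot be said to generate $bP_8$. If you want a generator, use e.g.\ $\Sigma(5,3,2,2,2)$, whose filling has signature $8$. This does not affect your primary argument.
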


\begin{proof}
 All elements of the group $bP_8$ are represented by $\Sigma(6k-1,3,2,2,2)$ (\cite[p.\ 13]{Brieskorn}). Thus, $M$ is diffeomorphic to a Brieskorn manifold, and we can apply Theorem~\ref{thm_general_mfd_exotic} again.
\end{proof}

\subsubsection{Higher dimensions} \label{sec_higher_dim}

An immediate question is whether an analog of Theorem~\ref{thm_exotic-S7} holds in higher dimensions. We may formulate it like this:

\begin{quote}
 Do there exist infinitely many exotic but homotopically trivial contact structures on $S^{4m-1}$ for $m\geq 2$?
\end{quote}
Note that this is not the case on $S^3$, see \cite{Eliash}.
A similar question for $S^{4m+1}$ was answered affirmatively by Ustilovsky \cite{Ustil}.

In general, it seems hopeless to get analogs of $\Sigma(78k+1,13,6,3,3)$ for general dimensions $4m-1$. The reason is that all terms involving the Bernoulli numbers (in particular $\sigma_m$ and $S_m$) get very complicated.

There is a somewhat simpler approach, making heavy use of connected sums. The strategy is to find Brieskorn manifolds $\Sigma_1$ and $\Sigma_2$ such that: 
\begin{itemize}
 \item Both $\Sigma_1$ and $\Sigma_2$ are diffeomorphic to $S^{4m-1}$.
 \item Viewing the almost contact structure as an element of $\ZZ$ (ignoring the second factor if $m$ is even), we have $ac_1 \defeq ac(\Sigma_1)> 0$ and $ac_2\defeq ac(\Sigma_2) <0$.
 \item $\Sigma \defeq (|ac_2|\Sigma_1) \# (ac_1 \Sigma_2)$ (the connected sum of $|ac_2|$ copies of $\Sigma_1$ with $ac_1$ copies of $\Sigma_2$) has mean Euler characteristic $\chi_m(\Sigma)\neq -\frac{1}{2}$.
\end{itemize}

Then, $\Sigma$ is diffeomorphic to $S^{4m-1}$ with trivial almost contact structure. By taking further connected sums of $\Sigma$ with itself, we get infinitely many values for the mean Euler characteristic, hence infinitely many exotic but homotopically trivial contact structures.

Now, the problem is to find such examples for $\Sigma_1$ and $\Sigma_2$. 
Since we require them to be diffeomorphic to $S^{4m-1}$, their signature should satisfy \eqref{eq_sig_diffeo}. So it should be either zero or very large. Unfortunately, there seem to be no examples with signature zero. It would be interesting to see a conceptual reason for this, possibly from the intersection matrix given in \cite{Pham}.
So the signature needs to have a specific, large value. One way to produce such examples is by mimicking Proposition~\ref{prop_signature}. Thus, we first choose numbers $a_1, \ldots, a_n$ with, say, $a_1$ relatively prime to the rest. Then we set 
\[
 a_0^{(k)} \defeq k\cdot\prod_{i=1}^n a_i + 1 
\]
and $\Sigma^{(k)} \defeq \Sigma(a_0^{(k)}, a_1, \ldots, a_n)$.
(We could also choose $a_0 = k\cdot\prod_{i=1}^n a_i - 1$, which would work similarly.)
With the same proof as for Proposition~\ref{prop_signature}, we get $\sigma(\Sigma^{(k)}) = k\cdot \sigma(\Sigma^{(0)})$.
Hence, once we computed $\sigma(\Sigma^{(0)})$, we can choose $k$ such that $\sigma(\Sigma^{(k)}) \equiv 0$ (e.g.\ $k=\sigma_m$) to ensure diffeomorphicity to $S^{4m-1}$.

For $\Sigma_1$, we can choose $\Sigma_1^{(k)} = \Sigma(6k+1, 3,2,\ldots, 2)$. Its signature is
\[
 \sigma = \sigma(\Sigma(6k+1, 3,2,\ldots, 2)) = k\cdot \sigma(\Sigma(7, 3,2,\ldots, 2)) = (-1)^m 8k.
\]
So we can choose 
\[
 k = \frac{\sigma_m}{8} = 2^{2m-2} \cdot (2^{2m-1}-1) \cdot \mathrm{numerator}\left(\frac{4B_m}{m}\right).
\]
As $\mu=12k=\frac{3}{2}\sigma_m$, we get for the almost contact structure
\begin{align*}
 ac &= (-1)^m \frac{\sigma}{4 S_m} - \frac{1}{2} \mu \\
 &= \left(\frac{1}{4 S_m} - \frac{3}{4}\right) \sigma_m.
\end{align*}
To see that this is positive, we use some estimates for $S_m$. First, a well-known identity for Bernoulli numbers states that
\[
 B_m = \frac{2(2m)!}{(2\pi)^{2m}} \cdot \zeta(2m),
\]
where $\zeta$ is the Riemann zeta function (see e.g.\ \cite[p.\ 286]{Milnor-Stasheff}). As $\zeta(2m)$ converges to $1$ very fast, $B_m\approx 2(2m)!/(2\pi)^{2m}$ is a good approximation. Therefore,
\[
 \frac{1}{4S_m} \approx \frac{(2m)!}{2^{2m+2}(2^{2m-1}-1)} \cdot \frac{(2\pi)^{2m}}{2(2m)!} = \frac{\pi^{2m}}{8(2^{2m-1}-1)}\approx \frac{1}{4} \left(\frac{\pi}{2}\right)^{2m}.
\]
It is not hard to make this estimate precise enough to show that $1/4S_m >3/4$ for all $m>2$. So $\Sigma_1$ does indeed fulfill $ac>0$.

The choice of $\Sigma_2$ is more of a problem. In view of the second condition, it seems reasonable to choose $\Sigma_2^{(k)} = \Sigma(k\cdot d(d+1)+1, d+1, d, \ldots, d)$, where $d$ is sufficiently large. Then we expect that $\mu = k\cdot d^2 \cdot (d-1)^n$ is sufficiently large to make $ac_2$ negative. Another plausible choice might be $\Sigma(2d\cdot k+1, 2, d, \ldots, d)$ for $d\gg 1$ odd. However, the precise value of $\sigma(\Sigma_2^{(k)})$ seems extremely hard to compute. Without such a computation at hand, $ac_2<0$ cannot be known for certain, and even assuming this, we cannot verify the third condition $\chi_m\neq -1/2$, although it looks entirely plausible. In this text, we restrict ourselves to dimensions $11$ and $15$, leaving the general case as a conjecture.

In dimension $11$, it turns out that $d=8$ works. So we take
\[
 \Sigma_2^{(k)} = \Sigma(72\cdot k+1, 9, 8, 8, 8, 8, 8).
\]
A computer calculation gives $\mu^{(k)} = 9680832k$ and $\sigma^{(k)} = -1060560 k$, so $k=496= \sigma_m/16$. This gives the almost contact structure $ac_2 = -396387936$, which is indeed negative.

As for the mean Euler characteristics, it turns out that $\chi_m(\Sigma_1) = -77393/130978 \approx -0.5909$ and $\chi_m(\Sigma_2) = 85520029/193850 \approx 441.1660$. So $\Sigma \defeq (|ac_2|\Sigma_1) \# (ac_1 \Sigma_2)$ has mean Euler characteristic $\chi_m = -3345510952696507/12695042650 \approx -263528.9$, for which we just need that it is not equal to $-1/2$.

In dimension $15$, it turns out that $d=8$ is not enough ($ac_2$ would still be positive), but $d=9$ works. Then, the numbers for 
\[
\Sigma_2^{(k)}=\Sigma(90\cdot k+1, 10,9,9,9,9,9,9,9)
\]
are $\mu^{(k)} = 1698693120k$ and $\sigma^{(k)} = 86754800$, so we can choose $k=4064=\sigma_m/16$, giving $ac_2 = -172412979840 < 0$. Then $\Sigma \defeq (|ac_2|\Sigma_1) \# (ac_1 \Sigma_2)$ has trivial almost contact structure and mean Euler characteristic $\chi_m = 744637007679318226185/6671235576398 \approx 111619054.5$. This finishes the proof of Theorem~\ref{thm_intro_exotic_spheres}.

It can be conjectured that this method can be applied in any dimension. The following consideration from stochastics makes it plausible that $ac_2$ will indeed be negative for $d$ sufficiently large. Let $X_0,\ldots,X_n$ be independent random variables, where $X_i$ is distributed uniformly on the discrete set 
\[
\left\{\frac{1}{a_i}, \ldots, \frac{a_i-1}{a_i}\right\}.
\]
Their sum $S_n = \sum_{i=0}^n X_i$ is a random variable on a discrete set inside $(0,n+1)$, and each outcome gives a contribution to the signature of $\Sigma(a_0,\ldots, a_n)$ as in \eqref{eq_a+}, \eqref{eq_a-}. We can try to estimate $\sigma(W_a)$ with the help of the central limit theorem.

First, all $X_i$ have mean value $1/2$ and standard deviation $\varsigma_i = \sqrt{\frac{a_i-2}{12a_i}}$. For $a_i$ large enough, $\varsigma_i \approx \sqrt{\frac{1}{12}}$ becomes a good approximation. The central limit theorem says that
\[
 \frac{S_n - \frac{n+1}{2}} {\sqrt{\sum_{i=0}^n \varsigma_i^2}} \approx \sqrt{\frac{12}{n+1}} \cdot \left(S_n - \frac{n+1}{2}\right)
\]
will converge in distribution to the standard normal distribution. This means that the cumulative density function can be approximated, for large $n$, by
\begin{equation} \label{eq_approx_by_clt}
 F_n(x) \defeq P({S_n\leq x}) \approx \Phi_{0,1}\left(\frac{12}{n+1} \left(x-\frac{n+1}{2}\right) \right),
\end{equation}
where $\Phi_{0,1}$ is the cumulative density function of the standard normal distribution. A numeric computation shows that, if we use the right hand side to compute the signature as in \eqref{eq_a+}, \eqref{eq_a-}, we get that the quotient $\sigma_a^+/\sigma_a^-$ is very close to one. Hence, it can be expected that $\sigma(W_a)$ is much smaller that $\mu$, so that $ac_2$ will be negative.

Of course, this argument is far from being precise. Most importantly, the approximation of $F_n$ with $\Phi_{0,1}$ is only good for heuristic purposes, as it is never exact for finite $n$. The Berry--Esseen theorem (a quantitative version of the central limit theorem) says that the error in \eqref{eq_approx_by_clt} can be of order at most $n^{-1/2}$. This is not good enough for our purposes, because one would need to do this approximation for all positive integers up to $n$, thereby possibly collecting a total error of order $n\cdot n^{-1/2}=\sqrt{n}$.

Besides, one needs an argument that the mean Euler characteristic of $\Sigma \defeq (|ac_2|\Sigma_1) \# (ac_1 \Sigma_2)$ cannot be $-1/2$. In the examples, it is far away from this value, but of course that requires a proof.

\end{document}